\tikzset{snake it/.style={decorate, decoration=snake}}
\theoremstyle{plain}
\newtheorem{thm}{Theorem}[section]
\newtheorem{cor}[thm]{Corollary}
\newtheorem{lem}[thm]{Lemma}
\newtheorem{prop}[thm]{Proposition}
\newtheorem{conj}[thm]{Conjecture}
\newtheorem{question}[thm]{Question}
\theoremstyle{definition}
\newtheorem{example}[thm]{Example}
\theoremstyle{remark}
\newtheorem{rmk}[thm]{Remark}
\newcommand{\BC}{{\mathbb{C}}}
\newcommand{\BG}{{\mathbb{G}}}
\newcommand{\BL}{{\mathbb{L}}}
\newcommand{\BP}{{\mathbb{P}}}
\newcommand{\BQ}{{\mathbb{Q}}}
\newcommand{\BZ}{{\mathbb{Z}}}
\newcommand{\CC}{{\mathcal C}}
\newcommand{\CF}{{\mathcal F}}
\newcommand{\CH}{{\mathcal H}}
\newcommand{\CL}{{\mathcal L}}
\newcommand{\CM}{{\mathcal M}}
\newcommand{\CO}{{\mathcal O}}
\newcommand{\CP}{{\mathcal P}}
\newcommand{\FF}{{\mathfrak{F}}}
\DeclareFontFamily{OT1}{rsfs}{}
\DeclareFontShape{OT1}{rsfs}{n}{it}{<-> rsfs10}{}
\DeclareMathAlphabet{\curly}{OT1}{rsfs}{n}{it}
\newcommand\Hom{\operatorname{Hom}}
\newcommand{\Chow}{\mathrm{CH}}
\newcommand{\Corr}{\mathrm{Corr}}
\newcommand{\sA}{\mathsf{A}}
\newcommand{\sH}{\mathsf{H}}
\newcommand{\mg}{\overline{\mathcal{M}}_g^{\leq 1}}
\newcommand{\mtwo}{\overline{\mathcal{M}}_2^{\mathrm{int}}}
\newcommand{\ppic}{\mathfrak{Pic}}
\newcommand{\rel}{\mathrm{rel}}
\let\@wraptoccontribs\wraptoccontribs
\begin{document}
\title[On generalized Beauville decompositions]{On generalized Beauville decompositions}
\date{\today}

\author[Y. Bae]{Younghan Bae}
\address{University of Michigan}
\email{younghan@umich.edu}



\author[D. Maulik]{Davesh Maulik}
\address{Massachusetts Institute of Technology}
\email{maulik@mit.edu}

\author[J. Shen]{Junliang Shen}
\address{Yale University}
\email{junliang.shen@yale.edu}

\author[Q. Yin]{Qizheng Yin}
\address{Peking University}
\email{qizheng@math.pku.edu.cn}

\begin{abstract}
Motivated by the Beauville decomposition of an abelian scheme and the ``Perverse = Chern'' phenomenon for a compactified Jacobian fibration, we study in this paper splittings of the perverse filtration for compactified Jacobian fibrations. 

On the one hand, we prove for the Beauville--Mukai system associated with an irreducible curve class on a $K3$ surface the existence of a Fourier-stable multiplicative splitting of the perverse filtration, which extends the Beauville decomposition for the nonsingular fibers. Our approach is to construct a Lefschetz decomposition associated with a Fourier-conjugate $\mathfrak{sl}_2$-triple, which relies heavily on recent work concerning the interaction between derived equivalences and LLV algebras for hyper-K\"ahler varieties. Motivic lifting and connections to the Beauville--Voisin conjectures are also discussed. 

On the other hand, we construct for any $g\geq 2$ a compactified Jacobian fibration of genus~$g$ curves such that each curve is integral with at worst simple nodes and the (multiplicative) perverse filtration does not admit a multiplicative splitting. Our argument relies on the recently established universal double ramification cycle relations. This shows that in general an extension of the Beauville decomposition cannot exist for compactified Jacobian fibrations even when the simplest singular point appears. 
\end{abstract}

\maketitle

\setcounter{tocdepth}{1} 

\tableofcontents
\setcounter{section}{-1}

\section{Introduction}\label{Sec0}

Throughout, we work over the complex numbers $\BC$. To motivate the discussion, we consider a principally polarized abelian scheme $\pi: A \to B$ of relative dimension $g$ over a nonsingular base $B$, so that $A$ is identified with its dual $A^\vee$.\footnote{Since we are mostly concerned with (compactified) Jacobian fibrations, we assume principal polarization to simplify some of the statements.} The Fourier--Mukai transform~\cite{Mukai}
\begin{equation}\label{FM}
\Phi_\CL: D^b\mathrm{Coh}(A) \to D^b\mathrm{Coh}(A) 
\end{equation}
associated with the normalized Poincar\'e line bundle $\mathcal{L}$ induces a (cohomological) Fourier transform~\cite{B0}
\begin{equation}\label{Fourier}
\mathfrak{F}: = \mathrm{ch}(\CL)\left( = \mathrm{exp}(c_1(\CL))\right): H^*(A, \BQ) \xrightarrow{\simeq} H^*(A, \BQ). 
\end{equation}
There is an interesting grading on $H^*(A, \BQ)$ (other than the cohomological grading) which splits the Leray filtration $L_\bullet H^*(A, \BQ)$ and interacts well with the Fourier transform (\ref{Fourier}).

\begin{thm}\cite{B, DM}\label{thm0}
There exists a splitting of the Leray filtration associated with $\pi: A \to B$,
\[
L_kH^*(A, \BQ) = \bigoplus_{i = 0}^{k} H_{(i)}^*(A, \BQ),
\]
which is
\begin{enumerate}
    \item[(a)] stable under the Fourier transform
    \[
    \mathfrak{F}\left( H_{(i)}^*(A, \BQ)\right) = H^{*}_{(2g-i)}(A, \BQ);
    \]
    \item[(b)] multiplicative with respect to the cup-product
    \[
    H_{(i)}^*(A, \BQ) \times H_{(j)}^{*}(A, \BQ) \xrightarrow{~~\cup~~} H_{(i+j)}^{*}(A, \BQ).
    \]
\end{enumerate}
\end{thm}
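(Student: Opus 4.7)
The plan is to construct the splitting as a simultaneous eigenspace decomposition for the family of pull-back operators $[N]^*$ on $H^*(A,\BQ)$, where $[N]\colon A \to A$ denotes multiplication by an integer $N \geq 2$ over the base $B$. The crucial input is the classical computation that $[N]^*$ acts on $R^j\pi_*\BQ$ by the scalar $N^j$; fiber-by-fiber this is the statement that for an abelian variety, $H^j = \bigwedge^j H^1$ and $[N]^*$ acts on $H^1$ by $N$. As a consequence, the Leray spectral sequence of $\pi$ degenerates at $E_2$, with graded pieces $\mathrm{gr}^L_j H^n(A,\BQ) = H^{n-j}(B, R^j\pi_*\BQ)$ on which $[N]^*$ acts by $N^j$.

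Fix $N \geq 2$. Since the eigenvalues of $[N]^*$ on the successive Leray graded pieces are the distinct integers $1, N, N^2, \ldots, N^{2g}$, the operator $[N]^*$ is semisimple on all of $H^*(A,\BQ)$ and the resulting eigenspace decomposition automatically splits the Leray filtration. Defining
$$H^*_{(i)}(A,\BQ) := \bigl\{\alpha \in H^*(A,\BQ) : [N]^*\alpha = N^i\,\alpha\bigr\}$$
then yields $L_kH^*(A,\BQ) = \bigoplus_{i=0}^k H^*_{(i)}(A,\BQ)$. Independence of the choice of $N$ follows because any two pull-backs $[N]^*$ and $[N']^*$ commute and act on each $R^k\pi_*\BQ$ by $N^k$ and $(N')^k$, so their joint eigenspaces coincide with each individual eigenspace decomposition.

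Multiplicativity is essentially tautological: since $[N]^*$ is a ring homomorphism, for $\alpha \in H^*_{(i)}$ and $\beta \in H^*_{(j)}$ one has $[N]^*(\alpha \cup \beta) = N^{i+j}\,\alpha \cup \beta$, hence $\alpha \cup \beta \in H^*_{(i+j)}$. For Fourier-stability I would use that, under the principal polarization, $[N]$ is self-dual, yielding the standard intertwining $\mathfrak{F} \circ [N]_* = [N]^* \circ \mathfrak{F}$. Combined with the projection identity $[N]_*[N]^* = N^{2g}\cdot\mathrm{id}$, this forces $[N]_*$ to act by $N^{2g-i}$ on $H^*_{(i)}$, so that $\mathfrak{F}(H^*_{(i)}) \subseteq H^*_{(2g-i)}$; equality follows by applying $\mathfrak{F}$ again.

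The main content of the argument, and the only step that is not purely formal, is the classical eigenvalue computation $[N]^*|_{R^j\pi_*\BQ} = N^j\cdot\mathrm{id}$. Once this is granted, semisimplicity of $[N]^*$ on total cohomology (from distinctness of eigenvalues across Leray graded pieces), multiplicativity (from $[N]^*$ being a ring map), and Fourier-compatibility (from self-duality of $[N]$ under the principal polarization) all follow formally, so I do not anticipate a serious obstacle.
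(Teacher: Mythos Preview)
Your proposal is correct and follows exactly the approach the paper indicates: immediately after stating the theorem, the paper defines $H_{(i)}^*(A,\BQ)$ as the $N^i$-eigenspace of $[N]^*$ and attributes the result to \cite{B,DM}, without giving further details. Your argument fleshes out precisely this construction, and the verifications of multiplicativity and Fourier-stability you give are the standard ones.
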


The splitting $H^*(A, \BQ) = \oplus_{i = 0}^{2g}H_{(i)}^*(A,\BQ)$ in Theorem \ref{thm0}, which is now referred to as the \emph{Beauville decomposition}, is induced by the ``multiplication by $N$'' map $[N]: A \to A$; more precisely, the component $H_{(i)}^*(A, \BQ)$ is given by the eigenspace 
\[
H_{(i)}^*(A, \BQ):=  \{\alpha \in H^*(A, \BQ)\,|\, [N]^* \alpha = N^{i} \alpha \}.
\]
Furthermore, it has actually been shown in \cite{B,DM} that the Beauville decomposition admits a \emph{motivic} lifting induced by algebraic cycles and cycle relations.

The work of Arinkin \cite{A1,A2} extends the Fourier--Mukai transform (\ref{FM}) to compactified Jacobian fibrations associated with families of integral projective locally planar curves. Let $C \to B$ be such a family of curves over a nonsingular base $B$, and let $\pi: \overline{J}_C \to B$ be the associated compactified Jacobian fibration.\footnote{Recall that the compactified Jacobian of an integral curve is the moduli space of rank $1$ degree $0$ torsion free sheaves on that curve; it is integral and contains an open subset parameterizing line bundles when the singularities of the curve are planar.} We further assume that the total space $\overline{J}_C$ is nonsingular. Analogously to the abelian scheme case, Arinkin's Fourier--Mukai transform induces a Fourier transform
\[
\mathfrak{F}: H^*(\overline{J}_C, \BQ) \xrightarrow{\simeq} H^*(\overline{J}_C, \BQ).
\]

The goal of this paper is to investigate the following question:
\begin{question}\label{Question}
For what class of families of integral locally planar curves $C \to B$, does there exist a generalization of the Beauville decomposition
for $\overline{J}_C$ satisfying conditions analogous to~(a) and (b) in Theorem \ref{thm0}?
\end{question}

When $C\to B$ arises as curves on a nonsingular surface $S$, we may view $\overline{J}_C$ as the moduli of certain stable torsion sheaves on $S$, and the Fourier transform $\mathfrak{F}$ governs \emph{tautological classes} on $\overline{J}_C$; see \cite[Section 5]{MSY}. Therefore, a systematic study of Question \ref{Question} may lead to a better understanding of the ``Perverse = Chern" phenomenon identifying perverse and Chern filtrations; this phenomenon was originally found for Hitchin systems via non-abelian Hodge theory and the $P=W$ conjecture \cite{dCHM1, MS_PW, HMMS} but was recently discovered in cases \cite{KPS} beyond Hitchin moduli spaces.

We will give both positive and negative results towards Question \ref{Question}. In a positive direction, our first result is the construction of a decomposition as in Theorem \ref{thm0} for the compactified Jacobian fibration $\pi: \overline{J}_C \to B$ where $C\to B= |L|$ is a complete linear system associated with an irreducible curve class on a $K3$ surface. Such a compactified Jacobian fibration is now known as the \emph{Beauville--Mukai system} \cite{Be}. Compared to Theorem \ref{thm0}, a major difference of Theorem \ref{thm0.2} is that we replace the Leray filtration by the perverse filtration \cite{dCM0}; the latter is more natural when singular fibers appear.

\begin{thm}\label{thm0.2}
Let $L$ be an irreducible curve class on a $K3$ surface $S$ with $L^2 = 2g-2$. Let $\pi: \overline{J}_C \to B=|L|$ be the associated compactified Jacobian fibration. Then there exists a splitting of the perverse filtration 
\[
P_k H^*(\overline{J}_C, \BQ) = \bigoplus_{i = 0}^{k} H_{(i)}^*(\overline{J}_C, \BQ)
\]
which is
\begin{enumerate}
    \item[(a)] stable under the Fourier transform
    \[
    \mathfrak{F}\left( H_{(i)}^*(\overline{J}_C, \BQ) \right) = H_{(2g-i)}^{*}(\overline{J}_C, \BQ);
    \]
    \item[(b)] multiplicative with respect to the cup-product
    \[
    H_{(i)}^*(\overline{J}_C, \BQ) \times H_{(j)}^{*}(\overline{J}_C, \BQ) \xrightarrow{~~\cup~~} H_{(i+j)}^{*}(\overline{J}_C, \BQ).
    \]
\end{enumerate}
\end{thm}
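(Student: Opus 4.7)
The plan is to realize the desired splitting as the weight decomposition of a semisimple operator $\mathfrak{h}$ inside an enlarged Lie algebra acting on $H^*(\overline{J}_C, \BQ)$, generated by the Looijenga--Lunts--Verbitsky (LLV) algebra and the conjugation action of the Fourier transform $\mathfrak{F}$. By Mukai, $X := \overline{J}_C$ is an irreducible holomorphic symplectic variety of dimension $2g$ and $\pi$ is a Lagrangian fibration. Let $\mathfrak{g}(X) \subset \mathrm{End}(H^*(X, \BQ))$ denote the LLV algebra. Relative hard Lefschetz applied to $\eta := \pi^*\omega$ (for $\omega$ ample on $|L|$) produces an $\mathfrak{sl}_2$-triple $(e_\eta, h_\pi, f_\eta)$ inside $\mathfrak{g}(X)$ whose weight filtration is (a shift of) the perverse filtration $P_\bullet H^*(X, \BQ)$.

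Next, using recent results on how derived auto-equivalences act on LLV algebras, I would show that conjugation by $\mathfrak{F}$ preserves $\mathfrak{g}(X)$ and sends $\eta$ to a multiple of the relative theta class $\theta$ on $\overline{J}_C$. Hence $\mathrm{Ad}(\mathfrak{F})$ sends $(e_\eta, h_\pi, f_\eta)$ to a second $\mathfrak{sl}_2$-triple $(e_\theta, h_\pi', f_\theta)$ with $e_\theta = \cup\,\theta$. Using the $\mathfrak{so}$-type commutation relations inside $\mathfrak{g}(X)$, one can extract from these two triples a third $\mathfrak{sl}_2$-triple $(E, \mathfrak{h}, F)$ on which $\mathrm{Ad}(\mathfrak{F})$ acts as the nontrivial Weyl element:
\[
\mathrm{Ad}(\mathfrak{F})\, E = -F, \qquad \mathrm{Ad}(\mathfrak{F})\, \mathfrak{h} = -\mathfrak{h}, \qquad \mathrm{Ad}(\mathfrak{F})\, F = -E.
\]
Defining $H^*_{(i)}(X, \BQ)$ to be the $(i-g)$-eigenspace of $\mathfrak{h}$ then forces Fourier stability (a). A comparison of the eigenvalues of $\mathfrak{h}$ and $h_\pi$ on perverse graded pieces will show that this eigenspace decomposition splits $P_\bullet H^*(X, \BQ)$.

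The crux of the argument is multiplicativity (b), which reduces to showing that $\mathfrak{h}$ is, up to an additive multiple of the identity, a derivation of the cup product. Since $E = \cup\,\theta$ is automatically a derivation, this follows once $F$ is known to be one. I would establish the derivation property for $F$ by combining Arinkin's theorem exchanging cup product and convolution under $\mathfrak{F}$ with the structural fact that dual Lefschetz operators on hyper-K\"ahler cohomology can be described in terms of the holomorphic symplectic form in a way that makes their commutators with cup-product operators into honest derivations.

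I expect multiplicativity to be the main obstacle. Producing the Fourier-conjugate $\mathfrak{sl}_2$-triple and checking Fourier stability are essentially formal once the LLV--derived-equivalence dictionary is in place; but the compatibility of the resulting grading with cup product does not follow from formal Lie theory, and requires genuine geometric input about how dual Lefschetz operators and Fourier-Mukai kernels interact with the algebra structure on $H^*(X, \BQ)$.
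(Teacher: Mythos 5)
Your proposal identifies the right toolbox (LLV algebra, Taelman's action of derived equivalences on it, Fourier conjugation to produce a new $\mathfrak{sl}_2$-triple) and has the same overall architecture as the paper's argument, but there is a technical misstep at the start and, as you yourself suspect, a genuine gap at the end. The misstep: relative hard Lefschetz does not apply to $\eta = \pi^*\omega$. A pullback of an ample class from $B$ restricts trivially to every fiber, so it is not relatively ample and does not give the $\mathfrak{sl}_2$-triple whose Jacobson--Morosov filtration computes the perverse filtration. The correct raising operator is $e_0 = \Theta \cup -$, where $\Theta$ is a \emph{generalized theta divisor}: a relatively ample class with $q(\Theta)=0$ for the BBF form, found here by a short lattice computation. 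The Fourier conjugate $f_0 = -\mathfrak{F}^{-1}e_0\mathfrak{F}$ is then shown to have cohomological degree $-2$ and to close up to an $\mathfrak{sl}_2$-triple, via Beckmann's formulas for the Fourier action on the extended Mukai lattice, Markman's $\Omega$-involution, and Verbitsky's relations. Relatedly, your claim that $\mathrm{Ad}(\mathfrak{F})$ sends the cup-product operator $e_\eta$ to another cup-product operator $e_\theta$ is not what the computation gives: the conjugate of a Lefschetz raising operator is a genuine lowering operator, not of cup-product type, and on the extended Mukai lattice $\mathfrak{F}^{\widetilde{H}}$ sends $H = \pi^*\omega$ to $\beta$ rather than to $\Theta$.

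The genuine gap is multiplicativity, and your proposed remedy (show the lowering operator $F$ is a derivation of cup product) is not the route the paper takes, nor does it have an evident direct proof; note that $e_0 = \Theta\cup-$ is itself not a derivation. The paper instead establishes that there is a \emph{unique} $\mathfrak{sl}_2$-triple in $\mathfrak{g}(\overline{J}_C)$ with $e_0 = \Theta\cup-$, $\deg f_0 = -2$, $\deg h_0 = 0$ (a Jacobson--Morosov uniqueness argument), and hence the Fourier-conjugate triple must coincide with the canonical $\mathfrak{sl}_2$-triple of Shen--Yin for Lagrangian fibrations of compact hyper-K\"ahler varieties. The hard geometric input — that this canonical triple splits the perverse filtration and that its bigrading is multiplicative — is imported from Shen--Yin, where multiplicativity is deduced from the multiplicativity of the Hodge decomposition after hyper-K\"ahler rotation. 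Without invoking that result, or reproducing an equivalent one, your outline cannot be completed.
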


For a family of integral locally planar curves $C\to B$ with $\overline{J}_C$ nonsingular, we call a splitting of the perverse filtration satisfying (a, b) of Theorem \ref{thm0.2} a \emph{generalized Beauville decomposition}.

On the other hand, our next result shows that, if we work with a general compactified Jacobian fibration for curves of genus $g \geq 2$, a generalized Beauville decomposition does not exist.\footnote{For $g=1$ one can check directly that a generalized Beauville decomposition exists; \emph{c.f.}~\cite{Z2}. Therefore Theorem \ref{thm0.40} is optimal.}
Indeed, an obstruction arises even when the simplest planar singularity appears.
\begin{thm}\label{thm0.40}
For any $g\geq 2$, there exists a family of integral projective curves $C\to B$ with~$\overline{J}_C$ nonsingular such that
\begin{enumerate}
    \item[(a)] each closed fiber of $C \to B$ has at worst nodal singularities, and
    \item[(b)] the (multiplicative) perverse filtration associated with $\pi:\overline{J}_C \to B$ does not admit a multiplicative splitting.
\end{enumerate}
\end{thm}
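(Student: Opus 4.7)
The plan is to construct, for each $g \geq 2$, an explicit versal family of integral one-nodal genus~$g$ curves whose associated compactified Jacobian fibration carries a non-split multiplicative perverse filtration. The strategy has three steps: build the family, rigidify any putative splitting on the smooth locus via Theorem~\ref{thm0}, and exhibit an obstruction coming from the nodal degeneration.

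For the construction, I would take $B$ to be a smooth open subset of a parameter space of integral genus~$g$ curves with at worst one node, chosen so that the universal family $C \to B$ has smooth total space and is versal along its nodal locus. Concrete candidates include a smooth open subset of a Severi variety of nodal curves on a fixed rational surface, or an open subset of the coarse moduli of genus~$g$ stable curves with deeper boundary strata removed. The compactified Jacobian $\pi: \overline{J}_C \to B$ is then nonsingular by standard local analysis of Arinkin's model, and the nodal locus $\Delta \subset B$ is a smooth divisor after shrinking.

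Suppose for contradiction that a multiplicative splitting $H^*(\overline{J}_C, \BQ) = \bigoplus_i H^*_{(i)}$ exists. Restriction to a smooth fiber $J_b$ for $b \in B \setminus \Delta$ gives a multiplicative splitting of $H^*(J_b, \BQ)$ compatible with the Fourier transform on the abelian variety $J_b$. An argument combining Fourier-stability (condition~(a)) with the characterization of the Beauville decomposition as the eigenspace decomposition of the multiplication-by-$N$ map should show that this restricted splitting must coincide with the classical Beauville decomposition of Theorem~\ref{thm0}, fiber by fiber. The candidate splitting is therefore rigidified on $B \setminus \Delta$.

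The final and most difficult step is to show that this rigid splitting on the smooth locus cannot be extended multiplicatively across $\Delta$. A multiplicative splitting is equivalent to a bigraded algebra isomorphism $\bigoplus_k \mathrm{Gr}^P_k H^*(\overline{J}_C, \BQ) \xrightarrow{\cong} H^*(\overline{J}_C, \BQ)$. To produce the obstruction, I would compute the cup product of two classes $a, b$ whose Beauville-graded components on smooth fibers are known, for instance tautological classes built from Arinkin's Fourier--Mukai kernel, or relative theta classes. The product $ab$ in $H^*(\overline{J}_C, \BQ)$ will carry a correction term in a higher perverse piece arising from the vanishing cycle at a general point of $\Delta$; this correction can be computed through a local Picard--Lefschetz analysis and should be nonzero precisely because the Beauville decomposition does not extend across the nodal degeneration. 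The main obstacle is carrying out this Picard--Lefschetz computation precisely enough to rule out cancellations from other parts of the splitting, and performing it uniformly in~$g$; a natural simplification is to first treat $g = 2$ using a family of sextic double covers of $\BP^1$ and then pass to general~$g$ by product/reduction arguments.
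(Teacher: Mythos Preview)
Your proposal has the right broad setup---working over (an atlas of) the locus of integral nodal curves in moduli---but it misses the two ideas that actually make the argument go through, and your outlined Steps~2--3 have genuine gaps. The paper does not try to rigidify a hypothetical splitting fiberwise and then contradict it via Picard--Lefschetz. Instead it observes (Section~\ref{Sec1.3}) that \emph{any} multiplicative splitting of the perverse filtration forces the existence of a \emph{generalized theta divisor}: a relatively ample class $\Theta \in H^2(\overline{J}_C,\BQ)$ with $\Theta^{g+1}=0$. Indeed, the projection of a relatively ample $\omega$ to $H^2_{(2)}$ differs from $\omega$ by a class in $P_1H^2$, hence is still relatively ample, and multiplicativity together with $H^*_{(i)}=0$ for $i>2g$ kills its $(g{+}1)$-st power. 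Your Step~2 is both unnecessary for this reduction and problematic on its own terms: Theorem~\ref{thm0.40} concerns \emph{any} multiplicative splitting, not one assumed Fourier-stable, so invoking condition~(a) is not justified; and a splitting of $P_\bullet H^*(\overline{J}_C,\BQ)$ does not restrict to a splitting of anything canonical on a single fiber.

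Having reduced to the nonexistence of a generalized theta divisor, the paper does not use local vanishing-cycle analysis either. It works over $\overline{\CM}_g^{\leq 1}$ (and $\overline{\CM}_2^{\mathrm{int}}$ when $g=2$), where $H^2(\overline{J}_g^{\leq 1},\BQ)$ is spanned by the tautological $\theta$, $\kappa_1$, and the boundary class $\delta$, and then invokes the \emph{universal double ramification cycle relation} of Bae--Holmes--Pandharipande--Schmitt--Schwarz (Pixton's formula on the Picard stack) to compute $\theta^{g+1}$ explicitly in terms of boundary pushforwards. This yields, for instance, $\pi_*\bigl(\theta^{g+1}/(g{+}1)!\bigr)=\tfrac{1}{48}\delta\neq 0$, and further identities that force any candidate $\Theta=\theta+b\delta$ to satisfy incompatible rational constraints on $b$ (for $g\geq 4$ one finds simultaneously $b=\tfrac12$ and $b=-\tfrac{1}{48}$; for $g=2,3$ a quadratic in $b$ with no rational root). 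Your proposed Picard--Lefschetz computation would, at best, have to reproduce exactly this kind of tautological identity, and you already flag the difficulty of excluding cancellations; the DR relation is precisely the tool that handles this uniformly in $g$. Finally, the passage from the stack to a variety base is done by pulling back along a smooth surjection $\rho:\overline{B}\to\overline{\CM}_g$ with irreducible fibers, using injectivity of $\rho^*$ on cohomology.
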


If we weaken the splitting to the filtration, then versions of Theorem \ref{thm0.2}(a, b) were proven to hold in \cite{MSY} for \emph{any} $\pi: \overline{J}_C \to B$ with $\overline{J}_C$ nonsingular. More precisely, it is shown that the perverse filtration is multiplicative and the Fourier transform is compatible with the perverse filtration; we refer to~\cite[Theorem 2.4]{MSY} for the more precise statements.
The point of Theorem~\ref{thm0.40} is that upgrading these properties from filtrations to decompositions seems to impose strong constraints on the family. For example, the proof of Theorem \ref{thm0.2} relies heavily on recent developments in the study of derived categories of compact hyper-K\"ahler varieties. We expect that Question~\ref{Question} has a positive answer when $\pi: \overline{J}_C\to B$ is Lagrangian.\footnote{Here we do not require that $\overline{J}_C$ is compact.} We refer to Section \ref{Sec1.4} for more discussions.

As a by-product of the proof of Theorem \ref{thm0.2}, we obtain an interesting $\mathfrak{sl}_2$-triple acting on the cohomology $H^*(\overline{J}_C, \BQ)$ which is of algebraic nature; see Remark \ref{rmk2.16}.

\begin{thm}\label{thm0.4}
Let $\pi: \overline{J}_C \to B$ be as in Theorem \ref{thm0.2}. Then there is an $\mathfrak{sl}_2$-triple 
\[
(e_0, h_0, f_0) \subset \mathrm{End}\left( H^*(\overline{J}_C, \BQ) \right)
\]
satisfying that
\begin{enumerate}
    \item[(a)] it induces a generalized Beauville decomposition of $H^*(\overline{J}_C, \BQ)$, and
    \item[(b)] all three operators $e_0, h_0, f_0$ are given by relative correspondences over $B$ induced by algebraic cycles on $\overline{J}_C\times_B \overline{J}_C$.
\end{enumerate}
\end{thm}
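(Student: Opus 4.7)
Part (a) is built into the construction: the $\mathfrak{sl}_2$-triple $(e_0, h_0, f_0)$ produced in the course of proving Theorem \ref{thm0.2} has, by design, the generalized Beauville decomposition as its weight space decomposition. The entire content of Theorem \ref{thm0.4} therefore lies in part (b)---verifying algebraicity of the three generators as relative correspondences on $\overline{J}_C \times_B \overline{J}_C$.

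Following the strategy outlined in the abstract, the triple $(e_0, h_0, f_0)$ is the Fourier conjugate of a standard $\mathfrak{sl}_2$-triple $(e, h, f)$ on $H^*(\overline{J}_C, \BQ)$ arising from the LLV algebra action on the hyper-K\"ahler variety $\overline{J}_C$. Concretely, let $\eta \in H^2(\overline{J}_C, \BQ)$ be the first Chern class of a $\pi$-relatively ample line bundle and take $e = \cup \eta$; relative hard Lefschetz for $\pi$ combined with the LLV structure produces a canonical $\mathfrak{sl}_2$-completion $(e, h, f)$ whose weight decomposition recovers the perverse filtration. Arinkin's Fourier--Mukai transform $\mathfrak{F}$ is induced by the normalized Poincar\'e sheaf $\CP$ on $\overline{J}_C\times_B \overline{J}_C$, so its cohomological incarnation is the algebraic relative correspondence $\ch(\CP)\cdot\sqrt{\td}$ (suitably normalized). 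Since composition of relative algebraic correspondences is again a relative algebraic correspondence, the algebraicity of
\[
(e_0, h_0, f_0) \;=\; \bigl(\mathfrak{F}\circ f\circ \mathfrak{F}^{-1},\ -\mathfrak{F}\circ h\circ \mathfrak{F}^{-1},\ \mathfrak{F}\circ e\circ \mathfrak{F}^{-1}\bigr)
\]
reduces to that of the standard triple $(e, h, f)$.

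The element $e$ is manifestly algebraic. The main obstacle is the algebraicity of the relative Lefschetz dual $f$, and hence of $h = [e, f]$. The plan is to exploit the compatibility of $\mathfrak{F}$ with the LLV algebra established in the recent work on derived equivalences of hyper-K\"ahler varieties cited in the abstract: cup product with the algebraic class $\mathfrak{F}(\eta) \in H^*(\overline{J}_C, \BQ)$ yields an algebraic relative correspondence which, by the LLV commutation relations transported through $\mathfrak{F}$, coincides with $f$ modulo corrections of strictly lower perverse degree. An induction on the perverse filtration then realizes $f$, and hence $h$, as explicit algebraic relative correspondences on $\overline{J}_C \times_B \overline{J}_C$. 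Combined with the algebraicity of $\mathfrak{F}$ above, this delivers the required algebraic representations of $(e_0, h_0, f_0)$ and completes part (b).
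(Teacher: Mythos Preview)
Your proposal inverts the logic of the paper's construction, and this creates a real gap. In the paper the operators are \emph{defined} algebraically: one first finds a generalized theta divisor $\Theta$ with $q(\Theta)=0$, sets $e_0:=\Theta\cup-$, and then \emph{defines}
\[
f_0:=-\,\mathfrak{F}^{-1}\circ e_0\circ\mathfrak{F},\qquad h_0:=[e_0,f_0].
\]
Since $e_0$ is the relative diagonal pushforward of an algebraic divisor class and $\mathfrak{F},\mathfrak{F}^{-1}$ are the relative correspondences \eqref{cycles} coming from Arinkin's Poincar\'e sheaf, all three operators are visibly relative algebraic correspondences over $B$; the paper simply states ``Clearly, all three operators are relative algebraic correspondences over $B$'' right after \eqref{operators}. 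Part (b) is therefore immediate. The substantial work---and this is where the LLV algebra and Taelman's theorem enter---is Proposition~\ref{prop2.1}: showing that the algebraically defined $f_0$ has degree $-2$ and that $(e_0,h_0,f_0)$ satisfy the $\mathfrak{sl}_2$ relations. Once that is done, uniqueness in Theorem~\ref{thm2.16} identifies this triple with the canonical LLV one, which gives part (a).

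You instead start from the abstract LLV completion $(e,h,f)$ of $e=\cup\,\eta$ and attempt to prove that the Lefschetz dual $f$ is algebraic, via a proposed induction using $\mathfrak{F}(\eta)$ and ``corrections of strictly lower perverse degree''. This is the hard direction, and your sketch does not supply a mechanism for it: there is no reason the lower-order corrections should themselves be algebraic, nor is it clear what inductive hypothesis would close the loop. Indeed, Remark~\ref{rmk2.16} points out precisely that algebraicity of $f_0$ is \emph{not} obvious from the LLV/\cite{SY} perspective; it is the K\"unnemann-style definition through Fourier conjugation that makes it transparent. So the fix is not to patch your induction but to reverse the order: define $f_0$ as the Fourier conjugate of $e_0$ (algebraic by construction), and then spend the effort proving the $\mathfrak{sl}_2$ relations.
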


We discuss in Section \ref{Sec3} a conjectural motivic lifting of this $\mathfrak{sl}_2$-triple and the generalized Beauville decomposition of Theorem \ref{thm0.2}. We verify the conjectures (Conjectures \ref{conj3.1} and \ref{conj3.2}) for elliptic $K3$ surfaces.

\subsection{Acknowledgements}
We would like to thank Giuseppe Ancona, Vicky Hoskins, Simon Pepin Lehalleur, Weite Pi, Shuting Shen, and Claire Voisin for helpful discussions on relevant topics. J.S.~gratefully acknowledges the hospitality of the math department of MIT during his stay in the fall of 2023 and the spring of 2024. 

Y.B.~was supported by the SNSF Postdoc.Mobility fellowship and the Deutsche Forschungsgemeinschaft (DFG, German Research Foundation) under Germany's Excellence Strategy-EXC-2047/1-390685813. J.S.~was supported by the NSF grant DMS-2301474. Q.Y.~was supported by the NSFC grants 11831013 and 11890661.

\section{Fourier transforms, generalized theta divisors, and decompositions}

In this section, we first review Arinkin's Fourier--Mukai transform on the derived category and its associated Fourier transform on cohomology. Then we discuss strategies for constructing generalized Beauville decompositions for compactified Jacobian fibrations. Along the way, we also introduce the \emph{generalized theta divisor} which provides an obstruction to the existence of a multiplicative splitting of the perverse filtration.

\subsection{Fourier transforms}\label{Sec1.1}
Throughout, we assume that $C \to B$ is a flat family of integral projective locally planar curves of arithmetic genus $g$ over a nonsingular variety $B$. Let \mbox{$\pi: \overline{J}_C \to B$} be the compactified Jacobian with $\overline{J}_C$ nonsingular. In \cite{A1, A2}, Arinkin constructed a normalized Poincar\'e sheaf 
\[
\CP \in \mathrm{Coh}\left(\overline{J}_C \times_B \overline{J}_C \right)
\] 
which induces an equivalence of the bounded derived category of coherent sheaves
\[
\Phi_\CP: D^b\mathrm{Coh}(\overline{J}_C) \xrightarrow{\simeq} D^b\mathrm{Coh}(\overline{J}_C).
\]
Its inverse is induced by the Fourier--Mukai kernel
\[
\CP^{-1} : = \CH{\mathrm{om}}\left( \CP, \CO_{\overline{J}_C\times_B \overline{J}_C} \right)\otimes p_2^* \omega_\pi[g]
\]
with $p_2: \overline{J}_C\times_B \overline{J}_C \to \overline{J}_C$ the second projection and $\omega_\pi$ the relative canonical line bundle. The functors~$\Phi_{\CP}, \Phi_{\CP^{-1}}$ yield the Fourier transforms on cohomology:
\begin{equation}\label{Fourier1}
\mathfrak{F}, \, \mathfrak{F}^{-1}: H^*(\overline{J}_C, \BQ) \xrightarrow{\simeq} H^*(\overline{J}_C, \BQ), \quad \mathfrak{F}\circ\mathfrak{F}^{-1} = \mathfrak{F}^{-1}\circ\mathfrak{F} = \mathrm{id}.
\end{equation}
More precisely, the operators $\mathfrak{F}, \mathfrak{F}^{-1}$ are induced by correspondences given by the cycle classes
\begin{equation}\label{cycles}
\mathfrak{F}= \mathrm{td}\left(-T_{\overline{J}_C\times_B\overline{J}_C} \right)^{\frac{1}{2}}\cap \tau(\CP), \quad \mathfrak{F}^{-1} = \mathrm{td}\left(-T_{\overline{J}_C\times_B\overline{J}_C} \right)^{\frac{1}{2}}\cap \tau(\CP^{-1}),
\end{equation}
where $T_{\overline{J}_C\times_B \overline{J}_C}$ is the virtual tangent bundle of the l.c.i.~scheme $\overline{J}_C\times_B \overline{J}_C$. Since in general the relative product $\overline{J}_C\times_B \overline{J}_C$ is singular and the normalized Poincar\'e sheaf $\CP$ of \cite{A2} does \emph{not} admit a finite resolution by locally free sheaves, we use the tau-class $\tau(-)$ of \cite[Chapter 18]{Ful}; see also \cite[Section 2.3]{MSY} for a brief review.

\begin{rmk}
Here the Todd convention for the Fourier transforms differs from the one in \cite{MSY}. Previously we let $\FF^{-1}$ bear all the Todd contribution so that $\FF$ restricts to the usual $\mathrm{ch}(\CL)$ over the smooth locus of $\pi: \overline{J}_C \to B$. But in the present paper we decide to let $\FF, \FF^{-1}$ share the Todd classes equally. The advantage is the compatibility with the pushforward under the closed embedding~$i: \overline{J}_C \times_B \overline{J}_C \hookrightarrow \overline{J}_C \times \overline{J}_C$, \emph{i.e.},
\[
i_*\FF = \mathrm{td}\left(T_{\overline{J}_C \times \overline{J}_C}\right)^{\frac{1}{2}}\mathrm{ch}(i_*\CP), \quad i_*\FF^{-1} = \mathrm{td}\left(T_{\overline{J}_C \times \overline{J}_C}\right)^{\frac{1}{2}}\mathrm{ch}(i_*\CP^{-1}).
\]
In particular, the Fourier transforms (\ref{Fourier1}) on $H^*(\overline{J}_C, \BQ)$ agree with the ones induced by the Fourier--Mukai kernels on the nonsingular \emph{absolute} product $\overline{J}_C\times \overline{J}_C$ supported on the closed subset $\overline{J}_C\times_B \overline{J}_C$. We emphasize that the Fourier transforms are naturally relative correspondences over $B$. This will be used to show Theorem \ref{thm0.4}(b) for the operators $f_0, h_0$. We will discuss its motivic lifting in Section \ref{Sec3}, where all the constructions are relative over $B$.
\end{rmk}

\subsection{Decompositions}\label{Sec1.2}

If the family of curves $C \to B$ has no singular fiber, the Jacobian fibration $\pi: J_C \to B$ is a principally polarized abelian scheme over $B$. There are three typical approaches to recover the Beauville decomposition. In the following, we discuss these approaches, and the difficulty in extending each approach over singular fibers.

The first approach, as described in Section \ref{Sec0}, is to use the ``multiplication by $N$'' map $[N]: J_C \to J_C$ to decompose the total cohomology into eigenspaces. However, when singular fibers appear, it is difficult to extend the map $[N]$ over them. 

The second approach, which was initiated in \cite{B,DM}, is to use the cycles (\ref{cycles}) associated with the Fourier transforms to obtain \emph{projectors}, which yields a (motivic) decomposition of the total cohomology. When there are singular fibers, this idea has been exploited in \cite{MSY} intensively. It yields a number of consequences concerning the perverse filtration, which include the construction of a motivic decomposition splitting the perverse filtration analogously to the smooth case. However, as discussed in \cite[Section 2.5.5]{MSY}, singular fibers force us to modify the correspondences constructed from $\mathfrak{F}, \mathfrak{F}^{-1}$ in order to get projectors. Consequently, the resulting decomposition becomes mysterious and is therefore hard to analyze; in particular it is unclear whether it is multiplicative and how it interacts with the Fourier transform.

The third approach is due to K\"unnemann \cite{Kun}, who constructed an $\mathfrak{sl}_2$-triple $(e_0,f_0,h_0)$ via the Fourier transform acting on the cohomology of $J_C$ and showed that the weight decomposition associated with this $\mathfrak{sl}_2$-triple recovers the Beauville decomposition. More precisely, the operator $e_0$ is given by the cup-product with a relatively ample class, the operator $f_0$ is the Fourier-conjugate of $e_0$, and $h_0$ is the commutator of $e_0, f_0$. The advantage of this approach is that all three operators are constructed for $\pi: \overline{J}_C \to B$ with singular fibers, so it suffices to check commutation relations between them. 

The goal of this paper is to explore the third approach above for compactified Jacobian fibrations. We first discuss a proposal for constructing the $\mathfrak{sl}_2$-triple for general $C \to B$, and a natural obstruction for this approach to work. This also provides an obstruction to the existence of a multiplicative splitting of the perverse filtration. 

\subsection{Generalized theta divisors}\label{Sec1.3}

Now we consider $C\to B$ with singular fibers. By the relative Hard Lefschetz theorem, a relatively ample class $\omega$ has perversity $2$,
\[
\omega \in P_2 H^2(\overline{J}_C, \BQ) \setminus P_1H^2(\overline{J}_C, \BQ).
\]
Therefore, if the generalized Beauville decomposition exists, there is a relatively ample class~$\Theta$ given by the projection of $\omega$ to the component $H_{(2)}^2(\overline{J}_C, \BQ)$. Since the perverse filtration terminates at $P_{2g}H^*(\overline{J}_C, \BQ)$, we have
\[
H^{*}_{(i)}(\overline{J}_C, \BQ) = 0, \quad i> 2g.
\]
Therefore the condition $\Theta \in H^2_{(2)}(\overline{J}_C, \BQ)$ forces the vanishing
\begin{equation}\label{eqn5}
\Theta ^{g+1} = 0 \in H^{2g+2}(\overline{J}_C, \BQ).
\end{equation}
Over the smooth locus of $\pi$, such a $\Theta$ is provided by the (normalized) relative theta divisor. We call a relatively ample $\BQ$-divisor $\Theta$ satisfying (\ref{eqn5}) a \emph{generalized theta divisor}.

\begin{question}\label{Q1.2}
    Does there exist a generalized theta divisor for $\pi: \overline{J}_C \to B$?
\end{question}

\begin{rmk}
    If the answer to Question \ref{Q1.2} is negative for $C \to B$, then there exists \emph{no} multiplicative splitting of the perverse filtration $P_\bullet H^*(\overline{J}_C, \BQ)$.
\end{rmk}

We will show in Section \ref{Sec4} that a generalized theta divisor does not exist in general for~$C \to B$ whose fibers have at worst simple nodes when $g\geq 2$.

\subsection{A proposal for constructing the $\mathfrak{sl}_2$-triple}\label{sec1.3}

We assume Question \ref{Q1.2} has an affirmative answer for a family $C\to B$ with $\Theta \in H^2(\overline{J}_C, \BQ)$ a generalized theta divisor. Following~\cite{Kun}, we define the raising operator
\[
e_0: =  \Theta \cup - \in \mathrm{End}\left( H^*(\overline{J}_C, \BQ) \right),
\]
and its Fourier conjugate
\[
f_0:=  -\mathfrak{F}^{-1}\circ e_0 \circ \mathfrak{F} \in \mathrm{End}\left( H^*(\overline{J}_C, \BQ) \right).
\]
We write $h_0$ to be their commutator
\[
h_0 := [e_0, f_0] \in \mathrm{End}\left( H^*(\overline{J}_C, \BQ) \right).
\]
The operator $e_0$ is of degree $2$ with respect the cohomological grading, but \emph{a priori} the operators $f_0, h_0$ may be of mixed degrees.

\begin{question}\label{Q1.3}
    Do $(e_0, h_0, f_0)$ form an $\mathfrak{sl}_2$-triple with $f_0$ of degree $-2$?
\end{question}

Both the $\mathfrak{sl}_2$-triple and the degree $-2$ conditions concern a set of cohomology relations. If the answer to Question \ref{Q1.3} is affirmative for $C\to B$, we get a decomposition from this $\mathfrak{sl}_2$-triple. Finally, we require that the decomposition obtained from the $\mathfrak{sl}_2$-triple $(e_0, h_0, f_0)$ satisfies all the desired properties --- it is Fourier-stable, splits the perverse filtration, and is multiplicative with respect to the cup-product. 

\begin{question}\label{Q1.4}
    Is the decomposition associated with the $\mathfrak{sl}_2$-triple $(e_0, h_0, f_0)$ a Fourier-stable multiplicative decomposition splitting the perverse filtration?
\end{question}

In general, answering the questions above for a given family $C\to B$ seems to be difficult. As we show in Section \ref{Sec4}, we do not expect positive answers in general. On the other hand, we are able to provide positive answers to all three questions in the setting of the Beauville--Mukai system, where we use recently developed tools in the study of compact hyper-K\"ahler varieties. As we will discuss in Section \ref{Sec2}, Question \ref{Q1.2} is solved using the Beauville--Bogomolov--Fujiki quadratic form, Question \ref{Q1.3} is solved using the Looijenga--Lunts--Verbitsky (LLV) Lie algebra \cite{LL, Ver95} and Taelman's recent work \cite{Taelman} on its interaction with derived equivalences, and Question \ref{Q1.4} is solved by relating with a decomposition introduced in \cite{SY}.

\subsection{Relations to other work}\label{Sec1.4}

As a consequence of the $P=W$ conjecture \cite{dCHM1} (now a theorem by \cite{MS_PW, HMMS}) and the Hodge--Tate decomposition of the character variety \cite{Shende}, the perverse filtration associated with the Hitchin fibration $h: M_{\mathrm{Higgs}} \to B$ has a multiplicative splitting. The idea of \cite{HMMS} is that this splitting is characterized by an $\mathfrak{sl}_2$-triple constructed geometrically by Hecke operators. In \cite{MSY}, a different proof of the $P=W$ conjecture was given using the Fourier transform. As commented in \cite{Hoskins}, considerations from the geometric Langlands correspondence suggest that the Fourier transform should exchange Hecke operators and certain tautological operators. Therefore, the Fourier transform seems to provide a path to connect the two proofs \cite{MS_PW, HMMS}, which concern tautological operators and Hecke operators respectively. 

However, as we explained in Section \ref{Sec1.2}, the Fourier transform does not seem to provide the desired decomposition using the approach of Deninger--Murre \cite{DM} when there are singular fibers. On the other hand, results of this paper suggest that K\"unnemann's Lefschetz decomposition may have a chance to work over singular fibers in the \emph{Lagrangian} setting. Our method here relies on techniques from compact hyper-K\"ahler geometry, so it only works for compact total spaces $\overline{J}_C$. New ideas may be needed to treat other interesting cases of Lagrangian fibrations $\pi: \overline{J}_C \to B$. A typical example to be understood is that $B$ is the elliptic locus of the Hitchin base with $\pi: \overline{J}_C \to B$ is the corresponding Hitchin system; we expect that in the Hitchin setting the operators $e_0, h_0, f_0$ constructed as in this paper form an $\mathfrak{sl}_2$-triple, which yields a generalized Beauville decomposition. 

In another direction, we propose in Section \ref{Sec3} conjectures on lifting the $\mathfrak{sl}_2$-triple and the generalized Beauville decomposition \emph{motivically}. This is related to the Beauville--Voisin conjectures \cite{Bea, Voi, Voi23} concerning algebraic cycles on compact hyper--K\"ahler varieties. 

\section{Proof of Theorem \ref{thm0.2}}\label{Sec2}


In this section we prove Theorem \ref{thm0.2} following the proposal of Section \ref{sec1.3}. We consider a~pair
\[
(S,L), \quad  L^2 = 2g-2
\]
with $S$ a projective $K3$ surface and $L \in \mathrm{Pic}(S)$ an irreducible curve class, that is, every curve in the linear system $|L|$ is irreducible. 


\subsection{Compact hyper-K\"ahler varieties}
Since the family of curves $C\to B$ is given by the complete linear system associated with $|L|$, the compactified Jacobian $\overline{J}_C$ is the moduli of stable $1$-dimensional sheaves $\CF$ on the $K3$ surface $S$ with
\[
[\mathrm{supp}(\CF)] = L, \quad \chi(\CF) = 1-g,
\]
with the stability condition given by any polarization. In particular, $\overline{J}_C$ is a compact hyper-K\"ahler variety of $K3^{[g]}$-type, and $\pi: \overline{J}_C \to B$ is a Lagrangian fibration with $B \simeq \BP^g$. This allows us to apply tools in compact hyper-K\"ahler geometry to study the cohomology of $\overline{J}_C$, which lead to solutions to Questions \ref{Q1.2}, \ref{Q1.3}, and \ref{Q1.4}.

\subsection{Solving Question \ref{Q1.2}}

By specialization, it suffices to solve Question \ref{Q1.2} for a very general pair $(S, L)$. In this case, the moduli space $\overline{J}_C$ is of Picard rank $2$; the algebraic part of the cohomology $H^2(\overline{J}_C, \BQ)$ is spanned by an ample class $\sA$ and the pullback of the hyperplane class~$\sH$ of the base $B \simeq \BP^g$.

We set $\Theta = \sA + \lambda \sH$ for some $\lambda \in \BQ$, and we are looking for a solution to the following equation
\[
\Theta^{g + 1} = (\sA + \lambda  \sH)^{g+1} = 0.
\]
Although this is a polynomial equation of degree $g$ (this is because $\sH^{g+1}=0$), the Beauville--Bogomolov--Fujiki (BBF) quadratic form $q(-)$ on $H^2(\overline{J}_C, \BQ)$ \cite{B1} reduces it to a \emph{linear} equation:
\[
q(\sA + \lambda \sH) =q(\sA) + 2 \lambda (\sA,\sH) =0.
\]
Here $(-,-)$ is the bilinear form associated with the BBF form, and we have used that \mbox{$q(\sH)=0$}. This solves $\lambda$ uniquely:
\[
\lambda = -\frac{q(\sA)}{2(\sA, \sH)} \in \BQ.
\]

Now we consider arbitrary $(S,L)$ with $L$ an irreducible curve class. After changing the basis and specialization, we may assume that the algebraic part of $H^2(\overline{J}_C, \BQ)$ contains a relatively ample class $\Theta$ which, together with $\sH$, forms a rank $2$ hyperbolic $\BQ$-lattice under the BBF~form,
\[
q(\Theta) = q(\sH) =0, \quad (\Theta, \sH) = 1.
\]
As in Section \ref{sec1.3}, we define the operators
\begin{equation}\label{operators}
e_0:=  \Theta \cup -, \quad f_0:= -\mathfrak{F}^{-1}\circ e_0 \circ \mathfrak{F}, \quad h_0 := [e_0, f_0] \in \mathrm{End}\left(H^*(\overline{J}_C, \BQ) \right).
\end{equation}
Clearly, all three operators are relative algebraic correspondences over $B$.

The goal of the next few sections is to establish the following proposition.

\begin{prop}\label{prop2.1}
The operator $f_0$ is of degree $-2$, and $(e_0, h_0, f_0)$ form an $\mathfrak{sl}_2$-triple in $\mathrm{End}\left(H^*(\overline{J}_C, \BQ) \right)$.
 \end{prop}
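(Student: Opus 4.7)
The plan is to realize $(e_0, h_0, f_0)$ as an $\mathfrak{sl}_2$-subalgebra of the Looijenga--Lunts--Verbitsky algebra $\mathfrak{g}(\overline{J}_C) \subset \mathrm{End}(H^*(\overline{J}_C, \BQ))$, using Taelman's theorem on the LLV-compatibility of derived autoequivalences. By Verbitsky, $\mathfrak{g}(\overline{J}_C) \simeq \mathfrak{so}(\widetilde{H}^2)$, where $\widetilde{H}^2 = H^2(\overline{J}_C, \BQ) \oplus U$ extends the BBF form by a hyperbolic plane $U$. A general feature of this algebra is that for any pair of isotropic classes $\eta, \eta' \in H^2$ with $q(\eta, \eta') = 1$, the raising operator $L_\eta$ fits into a canonical $\mathfrak{sl}_2$-triple $(L_\eta, H_{\eta, \eta'}, \Lambda_{\eta, \eta'})$ inside $\mathfrak{g}(\overline{J}_C)$, where $\Lambda_{\eta, \eta'}$ is a specific lowering operator of cohomological degree $-2$. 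Applied to our hyperbolic pair $(\Theta, H)$ from Section 2.2, this produces a candidate $\mathfrak{sl}_2$-triple $(L_\Theta, H_{\Theta, H}, \Lambda_{\Theta, H})$; the task then reduces to identifying the Fourier conjugate $f_0$ with $\Lambda_{\Theta, H}$.

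To make this identification, I would first invoke Taelman's theorem: since $\Phi_\CP$ is a derived autoequivalence, $\mathfrak{F}$ acts on $H^*(\overline{J}_C, \BQ)$ as an element of the LLV group $G(\overline{J}_C) \subset O(\widetilde{H}^2)$, and in particular normalizes $\mathfrak{g}(\overline{J}_C)$. Hence $f_0 = -\mathfrak{F}^{-1} \circ L_\Theta \circ \mathfrak{F}$ lies in $\mathfrak{g}(\overline{J}_C)$, and it remains to compute $\mathfrak{F}$ as an explicit element of $O(\widetilde{H}^2)$. Such an element is determined by its action on $H^2$, which in turn can be extracted from the first Chern class of the Arinkin kernel $\CP$ using the formulas of \cite{MSY} together with the BBF normalizations of Section 2.2. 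For the Beauville--Mukai system one should find that this action exchanges $\Theta$ and $H$ (up to sign) and preserves their BBF-orthogonal complement, with a compensating action on $U$ fixed by Taelman's framework. Substituting back into $f_0 = -\mathfrak{F}^{-1} \circ L_\Theta \circ \mathfrak{F}$ and applying the commutation relations in $\mathfrak{so}(\widetilde{H}^2)$, one obtains $f_0 = \Lambda_{\Theta, H}$ on the nose, which is by construction of degree $-2$.

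Once $f_0$ is identified with $\Lambda_{\Theta, H}$, the $\mathfrak{sl}_2$-relations $[e_0, f_0] = h_0$, $[h_0, e_0] = 2 e_0$, $[h_0, f_0] = -2 f_0$ follow formally from the $\mathfrak{so}(\widetilde{H}^2)$-structure of the triple associated with the hyperbolic pair $(\Theta, H)$. The main obstacle will be the explicit identification of the LLV group element corresponding to $\mathfrak{F}$: Taelman's theorem guarantees existence, but pinning the element down for Arinkin's Poincaré sheaf on the Beauville--Mukai system requires tracking its Mukai vector, carefully keeping the sign conventions and Todd corrections of Section 1.1 consistent with the LLV formulas, and verifying that the induced action indeed swaps $\Theta$ and $H$ rather than mixing them nontrivially with the BBF-orthogonal part. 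Everything else reduces to Lie-theoretic bookkeeping inside $\mathfrak{so}(\widetilde{H}^2)$.
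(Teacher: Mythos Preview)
Your overall strategy---place the operators inside the LLV algebra via Taelman and compute $\mathfrak{F}$ as an isometry of the extended Mukai lattice $\widetilde{H}^2 = H^2 \oplus U$---is exactly the paper's approach. The gap is your expectation for that isometry.

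You predict that $\mathfrak{F}^{\widetilde{H}}$ exchanges $\Theta$ and $H$ within $H^2$, preserves their orthogonal complement, and ``is determined by its action on $H^2$.'' This is wrong: by Beckmann's computation via extended Mukai vectors (the paper's Proposition~2.9), $\mathfrak{F}^{\widetilde{H}}$ swaps the plane $\langle\Theta,H\rangle \subset H^2$ with the \emph{extra} plane $U = \langle\alpha,\beta\rangle$; up to signs and lower-order terms one has $\alpha \leftrightarrow \Theta$ and $\beta \leftrightarrow H$. In particular $\mathfrak{F}^{\widetilde{H}}$ does not preserve $H^2$, is not determined by its restriction there, and cannot be extracted from $c_1(\CP)$ alone---its computation requires the full extended Mukai-vector formalism together with Rozansky--Witten-type Todd corrections. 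It is precisely this mixing of $H^2$ with $U$ that allows the degree to flip; an isometry acting as you describe on $H^2$ and trivially on $U$ would conjugate $L_\Theta$ to the \emph{raising} operator $L_H$, not to a lowering one. Even with Beckmann's formulas in hand, the paper needs more than bookkeeping: it passes through Taelman's polyvector operators $e'_\mu$, the Serre-duality involution $\Omega$ relating the two LLV presentations (Lemma~2.12, Corollary~2.13), a sign ambiguity in how $\mathfrak{F}^{\mathfrak{g}}$ acts on these operators (Lemma~2.11), and an auxiliary four-class Lie algebra model with Verbitsky's commutation relations (Propositions~2.4--2.5) before the $\mathfrak{sl}_2$-relations fall out. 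The endpoint does coincide with your $\Lambda_{\Theta,H}$, but the route is not the one you sketch.
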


Our main tools are the LLV algebra \cite{LL, Ver95} and its categorical perspective initiated by Taelman \cite{Taelman, Beckmann, Markman2}.

\subsection{LLV algebras}

We first recall the definition of the LLV algebra. Let $X$ be a compact hyper-K\"ahler variety of dimension $2n$. Since it suffices to prove the statements of Proposition~\ref{prop2.1} in $\mathrm{End}\left(H^*(\overline{J}_C, \BC) \right)$, we work with LLV algebras with $\BC$-coefficients for convenience.

An element $\eta \in H^2(X, \BC)$ is called of \emph{Lefschetz type} if for any $k \geq 0$, the cup-product with~$\eta^k$ induces an isomorphism
\[  \eta^k\cup -  : H^{2n - k}(X, \BC) \xrightarrow{\simeq} H^{2n + k}(X, \BC).
\]
In other words, the class $\eta$ induces an $\mathfrak{sl}_2$-triple $(e_\eta, h, f_\eta)$ acting on $H^*(X, \BC)$. The LLV algebra of $X$, denoted by $\mathfrak{g}(X)$, is defined to be the Lie algebra generated by all~$\mathfrak{sl}_2$-triples associated with Lefschetz type classes. 

The LLV algebra of a compact hyper-K\"ahler variety can be calculated in terms of the extended Mukai lattice \cite{LL, Ver95}. Recall that
the (complexified) extended Mukai lattice of $X$ is a graded vector space
\[
\widetilde{H}(X,\BC): = \BC\alpha \oplus H^2(X, \BC) \oplus \BC\beta, \quad \mathrm{deg}(\alpha) =0, \quad \mathrm{deg}(\beta) = 4
\]
with a quadratic form $\widetilde{q}(-)$. The form $\widetilde{q}(-)$ restricts to the BBF form on $H^2(X, \BC)$, and the two classes $\alpha, \beta$ are orthogonal to $H^2(X,\BC)$ satisfying
\[
\widetilde{q}(\alpha)=\widetilde{q}(\beta)=0, \quad (\alpha, \beta) = -1.
\]
Then the results of \cite{LL, Ver95} give a canonical isomorphism
\[
\mathfrak{g}(X) = \mathfrak{so}(\widetilde{H}(X,\BC)).
\]


\begin{example}[The symplectic $\mathfrak{sl}_2$-triple]\label{example}
We describe an interesting $\mathfrak{sl}_2$-triple in $\mathfrak{g}(X)$ which plays a crucial role in \cite{Ver95}. The holomorphic symplectic form $\sigma$ on $X$ produces a class \mbox{$\sigma \in H^2(X, \BC)$}. It further induces an operator 
\[
e_{\sigma}:= \sigma \cup - \in  \mathrm{End}( H^*(X, \BC) ).
\]
We denote by $h_{\mathrm{hol}}$ the holomorphic grading operator, which is characterized by acting on $H^{p,q}(X)$ via $h_\mathrm{hol} = (p-n)\mathrm{id}$. Then $e_{\sigma}, h_{\mathrm{hol}}$ can be completed into an $\mathfrak{sl}_2$-triple
\[
(e_\sigma,  h_{\mathrm{hol}}, f_{\sigma}) \subset \mathfrak{g}(X),   
\]
which we call the \emph{symplectic $\mathfrak{sl}_2$-triple} associated with $\sigma$.
\end{example}

\subsection{Lie algebra calculations}\label{sec2.4}
In this section, we carry out some explicit calculations with the LLV algebra to be used later. We consider four classes $\eta_1,\eta_2,\eta_3,\eta_4 \in H^2(X, \BC)$ satisfying 
\begin{equation} \label{eq:condition2.4}
\widetilde{q}(\eta_1) = \widetilde{q}(\eta_2) = \widetilde{q}(\eta_3) = \widetilde{q}(\eta_4) \neq 0, \quad (\eta_i, \eta_j) =0, \quad i\neq j.
\end{equation}
They generate $\mathfrak{sl}_2$-triples $(e_{\eta_i}, h, f_{\eta_i})$ in the LLV algebra $\mathfrak{g}(X)$ (this is essentially deduced in~\cite{Ver95}; see \emph{e.g.}~\cite[Lemma 2.5]{SY}).

We write
\[
K_{ij}: = [e_{\eta_i}, f_{\eta_j}] \in \mathfrak{g}(X), \quad i\neq j.
\]
Recall the Verbitsky relations concerning the operators $K_{ij}$.

\begin{prop}[Verbitsky relations \cite{Ver95}]\label{prop2.3}
For three distinct integers $1\leq i,j,k \leq 4$, we have
\begin{align*}
    K_{ij} & =-K_{ji}, \quad  [K_{ij}, K_{jk}]= 2K_{ik}, \quad [K_{ij}, h]=0, \\
    \quad [K_{ij}, e_{\eta_j}] & = 2e_{\eta_i},\quad  [K_{ij}, f_{\eta_j}]=2f_{\eta_i}, \quad [K_{ij}, e_{\eta_k}] = [K_{ij}, f_{\eta_k}] = 0. 
\end{align*}
\end{prop}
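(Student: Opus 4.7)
The plan is to reduce every identity in Proposition~\ref{prop2.3} to a computation inside the orthogonal Lie algebra $\mathfrak{so}(\widetilde{H}(X,\BC))$, using the canonical isomorphism $\mathfrak{g}(X) \cong \mathfrak{so}(\widetilde{H}(X,\BC))$ recalled above. For $v, w \in \widetilde{H}(X,\BC)$ I set
\[
v \wedge w : u \mapsto (v, u)\, w - (w, u)\, v \in \mathfrak{so}(\widetilde{H}(X,\BC)),
\]
so that the $v \wedge w$ span $\mathfrak{so}(\widetilde{H}(X,\BC))$ and satisfy the universal bracket identity
\[
[v \wedge w, v' \wedge w'] = (v, v')\, w \wedge w' - (v, w')\, w \wedge v' - (w, v')\, v \wedge w' + (w, w')\, v \wedge v'.
\]
Under the canonical embedding, $e_{\eta}$ corresponds to a scalar multiple of $\eta \wedge \beta$, $f_{\eta}$ to a scalar multiple of $\eta \wedge \alpha$, and the grading $h$ to a scalar multiple of $\alpha \wedge \beta$, with the normalizations pinned down by the requirements $[e_{\eta_i}, f_{\eta_i}] = h$ and $(\alpha, \beta) = -1$.

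With these identifications in hand, the next step is to compute $K_{ij} = [e_{\eta_i}, f_{\eta_j}]$ in closed form. Inserting $v = \eta_i$, $w = \beta$, $v' = \eta_j$, $w' = \alpha$ into the universal bracket formula and invoking the orthogonality conditions in (\ref{eq:condition2.4})---namely $(\eta_i, \eta_j) = 0$ for $i \neq j$, together with $(\eta_\ell, \alpha) = (\eta_\ell, \beta) = 0$---every term collapses except the one proportional to $(\alpha, \beta)\, \eta_i \wedge \eta_j$. This yields $K_{ij} = \lambda\, \eta_i \wedge \eta_j$ for a single nonzero scalar $\lambda$ determined by the common value $\widetilde{q}(\eta_1) = \cdots = \widetilde{q}(\eta_4)$. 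This uniformity is the primary reason for imposing a constant $\widetilde{q}$-value in (\ref{eq:condition2.4}): it forces the same coefficient $2$ to appear in each of the six Verbitsky relations.

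With $K_{ij}$, $e_{\eta_i}$, $f_{\eta_i}$, and $h$ all exhibited as explicit wedge products, each relation becomes a one-line check via the bracket formula and the orthogonality of $\{\alpha, \beta, \eta_1, \eta_2, \eta_3, \eta_4\}$. Antisymmetry $K_{ij} = -K_{ji}$ is immediate from $\eta_i \wedge \eta_j = -\eta_j \wedge \eta_i$. The bracket $[K_{ij}, K_{jk}]$ has only one surviving term---the one carrying the factor $(\eta_j, \eta_j)$---which reproduces $2 K_{ik}$; the same mechanism delivers $[K_{ij}, e_{\eta_j}] = 2 e_{\eta_i}$ and $[K_{ij}, f_{\eta_j}] = 2 f_{\eta_i}$. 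The vanishings $[K_{ij}, h] = 0$ and $[K_{ij}, e_{\eta_k}] = [K_{ij}, f_{\eta_k}] = 0$ for $k \notin \{i, j\}$ follow because every term in the respective expansions contains a factor of the form $(\eta_i, \eta_k)$, $(\eta_j, \eta_k)$, $(\eta_\ell, \alpha)$, or $(\eta_\ell, \beta)$, all of which vanish by (\ref{eq:condition2.4}). The only real obstacle is bookkeeping: tracking the scalar normalizations of $e_\eta, f_\eta, h$ inside $\mathfrak{so}(\widetilde{H})$ carefully enough that the coefficients $2$ in the statement come out with the correct sign. Once the normalization is fixed uniformly across the four indices---made possible precisely by the common $\widetilde{q}$-value in (\ref{eq:condition2.4})---all six identities follow as simultaneous corollaries of the $\mathfrak{so}$-bracket formula.
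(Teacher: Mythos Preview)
The paper does not actually supply a proof of Proposition~\ref{prop2.3}; it simply attributes the relations to Verbitsky's thesis~\cite{Ver95} and moves on. Your proposal, by contrast, gives a complete argument via the $\mathfrak{so}(\widetilde{H}(X,\BC))$ model. The computations you outline are correct: with $e_{\eta} = \eta \wedge \beta$, $h = 2\,\beta \wedge \alpha$, and $f_{\eta} = \frac{2}{\widetilde{q}(\eta)}\,\eta \wedge \alpha$ (the last normalization forced by $[e_{\eta},f_{\eta}]=h$), one finds $K_{ij} = -\frac{2}{q}\,\eta_i \wedge \eta_j$ with $q$ the common value $\widetilde{q}(\eta_i)$, and every identity then drops out of the universal $\mathfrak{so}$-bracket formula exactly as you describe.

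One point worth flagging is a mild logical ordering issue. In Verbitsky's original development, the relations of Proposition~\ref{prop2.3} are proved \emph{first}---by direct analysis of Lefschetz operators acting on the cohomology---and are then used as the key input to establish the isomorphism $\mathfrak{g}(X)\cong\mathfrak{so}(\widetilde{H}(X,\BC))$. Your argument runs this backwards, taking the $\mathfrak{so}$-identification as given and reading off the relations. Within the paper's exposition this is harmless, since the isomorphism is quoted as a black box from \cite{LL,Ver95}; but if you were writing a self-contained treatment you would need to be careful not to invoke a consequence to justify its own premise. The payoff of your route is brevity and transparency: once the wedge model is in hand, all six relations become uniform one-line checks, whereas Verbitsky's direct approach requires more hands-on work with the Lefschetz action.
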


Now we define for $i\neq j$ the following elements
\[
\sigma_{ij}:= \frac{1}{2}({\eta_i} +\mathfrak{i} {\eta_j}),\quad \overline{\sigma}_{ij}:= \frac{1}{2}({\eta_i} -\mathfrak{i} {\eta_j}) \in H^2(X, \BC)
\]
which induce the following elements in the LLV algebra:
\[
e_{\sigma_{ij}} = \frac{1}{2} (e_{\eta_i} +\mathfrak{i} e_{\eta_j}),\quad e_{\overline{\sigma}_{ij}} = \frac{1}{2} (e_{\eta_i} -\mathfrak{i} e_{\eta_j})\in \mathfrak{g}(X).
\]
One can check directly that
\[
\widetilde{q}(\sigma_{ij}) = \widetilde{q}(\overline{\sigma}_{ij})=0, \quad (\sigma, \overline{\sigma}) \neq 0.
\]

\begin{prop}[{\emph{c.f.}~\cite[Section 3.1]{SY}}] \label{prop2.4}
For distinct $i,j$, we consider the elements 
\[
f_{\sigma_{ij}}: = \frac{1}{2}(f_{\eta_i} - \mathfrak{i} f_{\eta_j}), \quad f_{\overline{\sigma}_{ij}}: = \frac{1}{2}(f_{\eta_i} + \mathfrak{i} f_{\eta_j})\in \mathfrak{g}(X).
\]
The following hold.
\begin{enumerate}
    \item[(a)] The three elements $(e_{\sigma_{ij}},h_{\sigma_{ij}}:= [e_{\sigma_{ij}}, f_{\sigma_{ij}}], f_{\sigma_{ij}})$ in $\mathfrak{g}(X)$ form an $\mathfrak{sl}_2$-triple.
    \item[(b)] The three elements $(e_{\overline{\sigma}_{ij}},h_{\overline{\sigma}_{ij}}:= [e_{\overline{\sigma}_{ij}}, f_{\overline{\sigma}_{ij}}], f_{\overline{\sigma}_{ij}})$ in $\mathfrak{g}(X)$ form an $\mathfrak{sl}_2$-triple.    \item[(c)] We have
    \[
    [e_{\sigma_{ij}}, f_{\overline{\sigma}_{ij}}]=  [e_{\overline{\sigma}_{ij}}, f_{{\sigma}_{ij}}]  = 0.
    \]
\end{enumerate}
\end{prop}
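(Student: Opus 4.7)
The plan is to reduce the entire proposition to direct Lie algebra computations in $\mathfrak{g}(X)$, using only the Verbitsky relations of Proposition \ref{prop2.3} together with the fact that since $e_{\eta_i}$ is a degree-$2$ operator we have $[h, e_{\eta_i}] = 2e_{\eta_i}$ and $[h, f_{\eta_i}] = -2f_{\eta_i}$ for each $i$. Recall that the four classes $\eta_i$ share a common neutral element $h$ of the associated $\mathfrak{sl}_2$-triples because $h$ is intrinsic to the cohomological grading of $X$.

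For part (a), I would first compute
\[
[e_{\sigma_{ij}}, f_{\sigma_{ij}}] = \tfrac{1}{4}\bigl([e_{\eta_i}, f_{\eta_i}] - \mathfrak{i}[e_{\eta_i}, f_{\eta_j}] + \mathfrak{i}[e_{\eta_j}, f_{\eta_i}] + [e_{\eta_j}, f_{\eta_j}]\bigr).
\]
Using $[e_{\eta_i}, f_{\eta_i}] = [e_{\eta_j}, f_{\eta_j}] = h$ and $K_{ji} = -K_{ij}$, this collapses to $h_{\sigma_{ij}} = \tfrac{1}{2}(h - \mathfrak{i} K_{ij})$. Then the relations $[h, e_{\eta_k}] = 2e_{\eta_k}$ together with $[K_{ij}, e_{\eta_j}] = 2e_{\eta_i}$ and $[K_{ij}, e_{\eta_i}] = -2e_{\eta_j}$ (the latter obtained from Verbitsky by swapping indices and using $K_{ji} = -K_{ij}$) give $[h_{\sigma_{ij}}, e_{\sigma_{ij}}] = 2e_{\sigma_{ij}}$ after cancellation. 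The identity $[h_{\sigma_{ij}}, f_{\sigma_{ij}}] = -2 f_{\sigma_{ij}}$ is verified symmetrically using the $f$-side Verbitsky relations $[K_{ij}, f_{\eta_j}] = 2f_{\eta_i}$ and $[K_{ij}, f_{\eta_i}] = -2f_{\eta_j}$.

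Part (b) is formally obtained from part (a) by replacing $\mathfrak{i}$ with $-\mathfrak{i}$ throughout; the arithmetic is identical and yields $h_{\overline{\sigma}_{ij}} = \tfrac{1}{2}(h + \mathfrak{i} K_{ij})$. For part (c), the key observation is that the sign pattern in $f_{\overline{\sigma}_{ij}} = \tfrac{1}{2}(f_{\eta_i} + \mathfrak{i} f_{\eta_j})$ (as opposed to the minus sign in $f_{\sigma_{ij}}$) is exactly what makes the $h$-contributions cancel:
\[
[e_{\sigma_{ij}}, f_{\overline{\sigma}_{ij}}] = \tfrac{1}{4}\bigl(h + \mathfrak{i} K_{ij} + \mathfrak{i} K_{ji} - h\bigr) = 0,
\]
and similarly $[e_{\overline{\sigma}_{ij}}, f_{\sigma_{ij}}] = 0$ by exchanging $\mathfrak{i} \leftrightarrow -\mathfrak{i}$.

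There is no serious obstacle here; the whole proposition is a formal consequence of the Verbitsky relations, and the subtle points are purely bookkeeping: one must track the opposite sign conventions between $e_{\sigma_{ij}}$ and $f_{\sigma_{ij}}$ (and likewise for the conjugate pair), which is precisely the arrangement needed so that the isotropic pair $\{\sigma_{ij}, \overline{\sigma}_{ij}\}$ behaves like a standard hyperbolic pair inside the LLV algebra. Once (a) is checked, (b) and (c) follow by symmetry and a direct cancellation respectively.
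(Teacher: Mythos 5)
Your proof is correct and takes essentially the same approach the paper cites as its primary argument, namely a direct verification via the Verbitsky relations of Proposition~\ref{prop2.3}; the computations $h_{\sigma_{ij}} = \tfrac{1}{2}(h - \mathfrak{i}K_{ij})$ and the ensuing commutator checks are all accurate. The paper also sketches a more geometric alternative (reducing to the symplectic $\mathfrak{sl}_2 \times \mathfrak{sl}_2$ via hyper-K\"ahler rotation), which you do not use, but that is an optional second route.
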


\begin{proof}
All three statements follow from the Verbitsky relations of Proposition \ref{prop2.3}.

Alternatively, we notice that when $\eta_i, \eta_j, \eta_k$ form the K\"ahler classes associated with three complex structures, the classes $\sigma_{ij}, \overline{\sigma}_{ij}$ are given by the symplectic form $\sigma$ and its complex conjugate with respect to the complex structure corresponding to $\eta_k$. Therefore (a, b, c) follow from the fact that $\sigma,\overline{\sigma}$ generate an $\mathfrak{sl}_2\times \mathfrak{sl}_2 \subset \mathfrak{g}(X)$ given by Example \ref{example} and its complex conjugate; the general case can be reduced to this special case using hyper-K\"ahler rotations as in \cite{Ver95}.
\end{proof}

For our purpose, we need to construct a more complicated $\mathfrak{sl}_2$-triple as follows which will play a crucial role. We set
\[
L:= [e_{\sigma_{12}}, f_{\sigma_{34}}], \quad \Lambda:= [e_{\sigma_{34}}, f_{\sigma_{12}}], \quad  H:= [L, \Lambda] \in \mathfrak{g}(X).
\]

\begin{prop}\label{prop2.5}
    The elements $(L, H, \Lambda)$ form an $\mathfrak{sl}_2$-triple in $\mathfrak{g}(X)$.
\end{prop}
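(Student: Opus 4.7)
The plan is to verify the three defining relations of an $\mathfrak{sl}_2$-triple,
\[
[L, \Lambda] = H, \qquad [H, L] = 2L, \qquad [H, \Lambda] = -2\Lambda.
\]
The first is tautological by definition of $H$, so only the last two require checking. The approach is to work out the brackets directly in $\mathfrak{g}(X)$ via the Verbitsky relations of Proposition~\ref{prop2.3}.

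First I would expand $L$ and $\Lambda$ in the basis $\{K_{ij}\}$. Bilinearity of the bracket together with the formulas $e_{\sigma_{ij}} = \tfrac12(e_{\eta_i} + ie_{\eta_j})$ and $f_{\sigma_{ij}} = \tfrac12(f_{\eta_i} - if_{\eta_j})$ yields
\[
L = \tfrac14(P - iQ), \qquad \Lambda = -\tfrac14(P + iQ),
\]
where $P := K_{13} + K_{24}$ and $Q := K_{14} - K_{23}$. Thus both $L$ and $\Lambda$ live inside the $\mathfrak{so}_4$-subalgebra of $\mathfrak{g}(X)$ generated by $\{K_{ij}\}_{1\le i,j\le 4}$. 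Next, computing $H = [L, \Lambda] = -\tfrac{i}{8}[P, Q]$ by expanding into four brackets $[K_{ab}, K_{cd}]$ and applying $[K_{ij}, K_{jk}] = 2K_{ik}$ (together with the vanishing when all four indices are distinct) yields $[P, Q] = 4(K_{12} - K_{34})$, hence
\[
H = \tfrac{i}{2}(K_{34} - K_{12}).
\]
In particular $H$ lies in the abelian ``Cartan'' spanned by $K_{12}$ and $K_{34}$.

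The remaining brackets $[H, L]$ and $[H, \Lambda]$ then reduce to the adjoint action of $K_{12}$ and $K_{34}$ on each $K_{ab}$ appearing in $P \pm iQ$, which is again controlled by Proposition~\ref{prop2.3}: one checks $[K_{12}, L] = 2iL$ and $[K_{34}, L] = -2iL$, so $[H, L] = \tfrac{i}{2}\cdot(-4i)\,L = 2L$; symmetrically $[K_{12}, \Lambda] = -2i\Lambda$ and $[K_{34}, \Lambda] = 2i\Lambda$, yielding $[H, \Lambda] = -2\Lambda$.

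The calculation is entirely elementary; the only real obstacle is bookkeeping --- tracking signs and the various factors of $i$ arising from the definition $\sigma_{ij} = \tfrac12(\eta_i + i\eta_j)$ together with the antisymmetry $K_{ij} = -K_{ji}$. A more conceptual alternative is to work through the canonical isomorphism $\mathfrak{g}(X) \cong \mathfrak{so}(\widetilde{H}(X,\BC))$: identifying $e_\eta, f_\eta$ (up to normalization) with $\alpha \wedge \eta$ and $\beta \wedge \eta$ in $\bigwedge^2 \widetilde{H}(X,\BC)$, the elements $L$ and $\Lambda$ become wedge products of two mutually orthogonal pairs of isotropic vectors in the extended Mukai lattice, and $(L, H, \Lambda)$ is transparently the standard $\mathfrak{sl}_2$-triple attached to a hyperbolic pair of isotropic $2$-planes inside $\mathfrak{so}$.
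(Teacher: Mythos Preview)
Your proposal is correct and follows essentially the same route as the paper: both arguments expand $L$, $\Lambda$, and $H$ in the $K_{ij}$'s and then verify the $\mathfrak{sl}_2$-relations by direct application of the Verbitsky relations of Proposition~\ref{prop2.3}. Your organization via $P = K_{13}+K_{24}$, $Q = K_{14}-K_{23}$ and the separate computation of $[K_{12},L]$, $[K_{34},L]$ is a cosmetic variant of the paper's computation of $[K_{12}-K_{34}, P]$ and $[K_{12}-K_{34}, Q]$; the conceptual $\mathfrak{so}$-picture you sketch at the end is a nice addendum not present in the paper.
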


\begin{proof}
    It suffices to show that 
    \begin{equation}\label{strange_sl2}
    [H, L]=2L, \quad [H, \Lambda] = -2\Lambda.
    \end{equation}
This can also be deduced from the Verbitsky relations. Since the calculation is more involved than that of Proposition \ref{prop2.4}, we provide a proof for the reader's convenience.

We first express $L, \Lambda, H$ in terms of $K_{ij} ~(i<j)$ using the Verbitsky relations:
\begin{align*}
    L &= \frac{1}{4}( (K_{13}+K_{24}) - \mathfrak{i} (K_{14}-K_{23})), \\
    \Lambda &= \frac{1}{4}( - (K_{13}+K_{24}) - \mathfrak{i}(K_{14} - K_{23})),\\
 H &= [L, \Lambda] =  -\frac{\mathfrak{i}}{8} [K_{13}+K_{24}, K_{14} - K_{23}  ] = -\frac{\mathfrak{i}}{2}(K_{12} - K_{34}),
\end{align*}
where we have used $K_{ij}=-K_{ji}$, $[K_{ij}, K_{jk}]=2K_{ik}$ in calculating $H$. We further notice that
\begin{align*}
    [K_{12} - K_{34}, K_{13}+K_{24}] = 4(K_{14}-K_{23}),\\
    [K_{12} - K_{34}, K_{14}-K_{23}] = -4(K_{13}+K_{24}),
\end{align*}
from which we may calculate the commutators $[H,L]$, $[H, \Lambda]$. The commutation relations (\ref{strange_sl2}) then follow.
\end{proof}

\begin{rmk}
Unlike the $\mathfrak{sl}_2$-triples of Proposition \ref{prop2.4}, the $\mathfrak{sl}_2$-triple of Proposition \ref{prop2.5} does not have a clear geometric meaning. However, we will use this $\mathfrak{sl}_2$-triple to construct a Fourier--conjugate $\mathfrak{sl}_2$-triple in Section \ref{Sect2.7}.
\end{rmk}

\subsection{Derived perspectives of LLV algebras}
A beautiful observation of Taelman \cite{Taelman} is that $\mathfrak{g}(X)$ is a derived invariant --- it only depends on the bounded derived category $D^b\mathrm{Coh}(X)$. Taelman's result relies on the fact that the LLV algebra can be recovered from Hochschild cohomology which we review as follows.

We denote by $\mathrm{HT}^*(X)$ the graded $\BC$-algebra of polyvector fields with degree $k$ part
\[
\mathrm{HT}^k(X) = \bigoplus_{p+q=k} H^q\left(X, \wedge^p T_X \right).
\]
Via the isomorphism $\wedge^p T_X \simeq \Omega^p_X$ induced by $\sigma$, and the natural isomorphism between singular cohomology and de Rham cohomology, we have a graded isomorphism
\[
\mathrm{HT}^*(X) = H^*(X,\BC).
\]
The cohomology $H^*(X, \BC)$ admits a natural action of the algebra $\mathrm{HT}^*(X)$ by contraction, \emph{i.e.}, for $v\in H^q(X, \wedge^p T_X)$ and $\gamma \in H^{p',q'}(X)$, we have
\[
v \lrcorner \gamma \in H^{p'-p, q'+q}(X).
\]
Using this action, we consider $\mathfrak{sl}_2$-triples
\begin{equation}\label{HH_sl2}
(e'_{\mu}, h', f'_{\mu}), \quad \mu \in \mathrm{HT}^2(X)
\end{equation}
in the Lie algebra $\mathrm{End}( H^*(X, \BC) )$, where $e'_\mu$ acts by contracting $\mu$ as above and $h'$ acts on~$H^{p,q}(X)$ via $(q-p)\mathrm{id}$. Taelman proved in \cite{Taelman} that $\mathfrak{g}(X)$ is exactly the minimal Lie subalgebra of $\mathrm{End}( H^*(X, \BC) )$ containing all $\mathrm{sl}_2$-triples of the form (\ref{HH_sl2}). Since the Hochschild--Kostant--Rosenberg isomorphism identifies the Hochschild cohomology of $X$ with $\mathrm{HT}^*(X)$, the following theorem is a consequence of the discussion above.

\begin{thm}[\cite{Taelman}]\label{thm2.2}
A derived equivalence between compact hyper-K\"ahler varieties
\[
\Phi: D^b\mathrm{Coh}(X) \xrightarrow{\simeq} D^b\mathrm{Coh}(X')
\]
induces a natural isomorphism of Lie algebras
\[
\Phi^\mathfrak{g}: \mathfrak{g}(X) \xrightarrow{\simeq} \mathfrak{g}(X').
\]
Moreover, the cohomological correspondence $\Phi^{H}: H^*(X, \BC)\xrightarrow{\simeq} H^*(X', \BC)$ induced by $\Phi$ is equivariant with respect to $\Phi^\mathfrak{g}$.
\end{thm}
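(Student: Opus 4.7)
\textbf{Proof proposal for Theorem \ref{thm2.2}.} The plan is to exploit the characterization of $\mathfrak{g}(X)$ stated just above the theorem: the LLV algebra coincides with the minimal Lie subalgebra of $\mathrm{End}(H^*(X,\BC))$ containing every contraction $\mathfrak{sl}_2$-triple $(e'_\mu, h', f'_\mu)$ indexed by $\mu \in \mathrm{HT}^2(X)$. Since Hochschild (co)homology together with its module structure is a derived invariant, the theorem reduces to showing that the cohomological Fourier--Mukai transform intertwines the contraction action of $\mathrm{HT}^*(X)$ on $H^*(X,\BC)$ with that of $\mathrm{HT}^*(X')$ on $H^*(X',\BC)$ under a natural graded isomorphism of the polyvector field algebras.

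First I would recall that any Fourier--Mukai equivalence $\Phi: D^b\mathrm{Coh}(X) \xrightarrow{\simeq} D^b\mathrm{Coh}(X')$ induces a graded algebra isomorphism $\Phi^{HH}: HH^*(X) \xrightarrow{\simeq} HH^*(X')$ on Hochschild cohomology and a module isomorphism $\Phi^{HH_*}: HH_*(X) \xrightarrow{\simeq} HH_*(X')$ on Hochschild homology, with the equivariance $\Phi^{HH_*}(\mu \cdot \alpha) = \Phi^{HH}(\mu) \cdot \Phi^{HH_*}(\alpha)$. I would then invoke the $\sqrt{\mathrm{td}}$-twisted HKR identification of C\u{a}ld\u{a}raru (now a theorem) to transport this statement into the polyvector field picture: it produces a graded algebra isomorphism $\widetilde{\Phi}^{HT}: \mathrm{HT}^*(X) \xrightarrow{\simeq} \mathrm{HT}^*(X')$, and it identifies $\Phi^H$ with the image of $\Phi^{HH_*}$ under the twisted HKR maps on both sides, so that the preceding equivariance becomes the intertwining
\[
\Phi^H(\mu \lrcorner \gamma) = \widetilde{\Phi}^{HT}(\mu) \lrcorner \Phi^H(\gamma), \quad \mu \in \mathrm{HT}^*(X),\ \gamma \in H^*(X,\BC).
\]

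From here the construction of $\Phi^\mathfrak{g}$ is immediate. Since $\widetilde{\Phi}^{HT}$ is graded, it restricts to an isomorphism $\mathrm{HT}^2(X) \xrightarrow{\simeq} \mathrm{HT}^2(X')$, so conjugation by $\Phi^H$ carries each generator $e'_\mu$ to $e'_{\widetilde{\Phi}^{HT}(\mu)}$, and likewise for $f'_\mu$. Granting the corresponding statement for the grading operator $h'$, conjugation by $\Phi^H$ sends every contraction $\mathfrak{sl}_2$-triple of $\mathfrak{g}(X)$ to one of $\mathfrak{g}(X')$, and by the generation statement extends to a Lie algebra isomorphism $\Phi^{\mathfrak{g}}: \mathfrak{g}(X) \xrightarrow{\simeq} \mathfrak{g}(X')$ for which $\Phi^H$ is equivariant by construction on generators and by propagation through brackets elsewhere.

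The main obstacle is the Todd twist: without modifying HKR by $\sqrt{\mathrm{td}}$ the cohomological Fourier--Mukai transform does not intertwine the contraction actions, so the argument depends essentially on the full strength of the twisted HKR theorem. A second, more subtle point is the derived invariance of the grading operator $h'$, which requires producing an intrinsic Hochschild-cohomological description of $h'$ (for instance as contraction by a canonical class in $\mathrm{HT}^0(X)$) and checking its compatibility with the Mukai-vector normalization built into $\Phi^H$.
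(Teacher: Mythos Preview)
Your proposal is correct and follows essentially the same route as the paper: the paper's argument is the one-line observation that, via HKR, the Hochschild cohomology action on Hochschild homology becomes the contraction action of $\mathrm{HT}^*(X)$ on $H^*(X,\BC)$, so the derived-invariant characterization of $\mathfrak{g}(X)$ in terms of the $\mathfrak{sl}_2$-triples $(e'_\mu,h',f'_\mu)$ yields the result. Your concern about $h'$ is not an actual obstacle, since under (twisted) HKR the operator $h'$ is precisely the Hochschild homological degree, which any $\Phi^{HH_*}$ preserves automatically; the paper (and Taelman) absorb this into the same sentence.
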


\subsection{Fourier transforms}

Now we focus on the the compact hyper-K\"aher variety $\overline{J}_C$ and Arinkin's Fourier--Mukai transform
\begin{equation}\label{FMP}
\Phi_\CP: D^b\mathrm{Coh}(\overline{J}_C) \xrightarrow{\simeq} D^b\mathrm{Coh}(\overline{J}_C).
\end{equation}
Applying Theorem \ref{thm2.2}, we obtain an isomorphism of Lie algebras
\[
\mathfrak{F}^\mathfrak{g}: \mathfrak{g}(\overline{J}_C) \xrightarrow{\simeq} \mathfrak{g}(\overline{J}_C)
\]
with respect to which the Fourier transform $\mathfrak{F} : H^*(\overline{J}_C, \BC) \xrightarrow{\simeq} H^*(\overline{J}_C, \BC)$ is equivariant.

Recall the operators (\ref{operators}); we note that they all lie in the LLV algebra.

\begin{lem}
We have 
\[
e_0, f_0, h_0 \in \mathfrak{g}(\overline{J}_C).
\]
\end{lem}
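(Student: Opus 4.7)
The plan is to reduce the lemma to a direct application of Taelman's theorem (Theorem \ref{thm2.2}) combined with the structural description of the LLV algebra as $\mathfrak{g}(\overline{J}_C) \simeq \mathfrak{so}(\widetilde{H}(\overline{J}_C, \BC))$.

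First I would verify that $e_0 = e_\Theta \in \mathfrak{g}(\overline{J}_C)$. Under the isomorphism with $\mathfrak{so}(\widetilde{H}(\overline{J}_C, \BC))$, the cup-product operators $\{e_\eta : \eta \in H^2(\overline{J}_C, \BC)\}$ form the abelian nilpotent radical of the parabolic subalgebra stabilizing the isotropic line $\BC\alpha$, so they all lie in $\mathfrak{g}(\overline{J}_C)$. An equivalent elementary argument: Lefschetz-type classes form a Zariski open subset of $H^2(\overline{J}_C, \BC)$, so the generalized theta divisor $\Theta$ can be written as a $\BC$-linear combination of two Lefschetz classes $\eta_1, \eta_2$, placing $e_\Theta = e_{\eta_1} + e_{\eta_2}$ in the subalgebra generated by the associated Lefschetz $\mathfrak{sl}_2$-triples.

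Next I would apply Theorem \ref{thm2.2} to Arinkin's derived autoequivalence $\Phi_\CP$. This yields a Lie algebra automorphism $\mathfrak{F}^\mathfrak{g}: \mathfrak{g}(\overline{J}_C) \xrightarrow{\simeq} \mathfrak{g}(\overline{J}_C)$ together with the equivariance $\mathfrak{F} \circ x = \mathfrak{F}^\mathfrak{g}(x) \circ \mathfrak{F}$ for every $x \in \mathfrak{g}(\overline{J}_C)$, which rearranges to $\mathfrak{F}^{-1} \circ x \circ \mathfrak{F} = (\mathfrak{F}^\mathfrak{g})^{-1}(x)$. Applying this to $x = e_0$ gives $f_0 = -(\mathfrak{F}^\mathfrak{g})^{-1}(e_0)$, which lies in $\mathfrak{g}(\overline{J}_C)$ because $(\mathfrak{F}^\mathfrak{g})^{-1}$ is a Lie algebra automorphism. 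Closure of $\mathfrak{g}(\overline{J}_C)$ under the Lie bracket then immediately gives $h_0 = [e_0, f_0] \in \mathfrak{g}(\overline{J}_C)$.

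The only subtle step will be placing $e_\Theta$ inside $\mathfrak{g}(\overline{J}_C)$, since $\Theta$ is isotropic under the BBF form and hence not itself of Lefschetz type; one cannot just quote the generating $\mathfrak{sl}_2$-triple for $\Theta$ and must instead use the full $\mathfrak{so}$-description of $\mathfrak{g}(\overline{J}_C)$ (or the density argument above). Once this is in hand, the remainder of the proof is a formal consequence of Taelman's derived invariance of the LLV algebra.
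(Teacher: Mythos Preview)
Your proposal is correct and follows essentially the same approach as the paper: for $e_0$ you write $\Theta$ as a combination of Lefschetz-type classes (the paper phrases this as $\eta = \eta' - \eta''$ with $q(\eta'), q(\eta'')$ nonzero, citing \cite[Lemma 2.5]{SY}), and for $f_0$ you invoke Taelman's equivariance exactly as the paper does, with $h_0$ following by closure under the bracket. The only cosmetic difference is that you write $f_0 = -(\mathfrak{F}^\mathfrak{g})^{-1}(e_0)$ while the paper writes $f_0 = -\mathfrak{F}^\mathfrak{g}(e_0)$; this is a matter of convention in how one reads the equivariance of $\mathfrak{F}$, and either automorphism of $\mathfrak{g}(\overline{J}_C)$ yields the conclusion.
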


\begin{proof}
By Verbitsky's construction of the LLV algebra \cite{Ver95}, any element $e_\eta$ associated with $\eta \in H^2(X, \BC)$ lies in the LLV algebra. Therefore, $e_0 \in \mathfrak{g}(\overline{J}_C)$. Alternatively, this can be seen from \cite[Lemma 2.5]{SY}, since any class $\eta \in H^2(\overline{J}_C, \BC)$ can be written as the $\eta= \eta' - \eta''$ with~$q(\eta), q(\eta')$ nonzero. By definition and Theorem \ref{thm2.2}, we have
\[
f_0 = -\mathfrak{F}^\mathfrak{g}(e_0) \in \mathfrak{g}(\overline{J}_C),\] 
which further implies $h_0 = [e_0, f_0] \in \mathfrak{g}(\overline{J}_C)$.
\end{proof}

Next, we want to show that $(e_0,h_0,f_0)$ form an $\mathfrak{sl}_2$-triple in $\mathfrak{g}(\overline{J}_C)$, and we relate it to the symplectic $\mathfrak{sl}_2$-triple of Example \ref{example}.

Using the Verbitsky component generated by classes in the second cohomology, Taelman attaches to the derived equivalence (\ref{FMP}) a canonical (Hodge) isometry between the extended Mukai lattices
\begin{equation}\label{isom}
\mathfrak{F}^{\widetilde{H}}: \widetilde{H}(\overline{J}_C, \BC) \xrightarrow{\simeq} \widetilde{H}(\overline{J}_C, \BC);
\end{equation}
see \cite[Theorems 4.10, 4.11]{Taelman}. Taelman's construction relies on certain numerical assumptions which are satisfied by all known examples of compact hyper-K\"ahler varieties including $\overline{J}_C$.





Using extended Mukai vectors, the action of (\ref{isom}) on the four algebraic classes
\begin{equation}\label{alg}
\alpha, \beta, \Theta, \sH \in \widetilde{H}(\overline{J}_C, \BC)
\end{equation}
is described by the following formulas of Beckmann.

\begin{prop}[\cite{Beckmann}]\label{prop2.9}
There is a constant $c_0 = 1$ for $g$ odd, and $c_0 = 1$ or $-1$ for $g$ even, such~that
\begin{gather*}
    \mathfrak{F}^{\widetilde{H}}(\alpha) = -c_0\left(\Theta - \frac{g+1}{2}\beta\right), \quad 
\mathfrak{F}^{\widetilde{H}}(\beta) =  c_0\sH,\\ 
\mathfrak{F}^{\widetilde{H}}(\Theta) = c_0\left(\alpha -\frac{g+1}{2}\sH\right), \quad \mathfrak{F}^{\widetilde{H}}(\sH) = -c_0\beta.
\end{gather*}
\end{prop}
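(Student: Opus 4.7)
The plan is to derive these four identities by specializing Beckmann's general computation of the Taelman extended Mukai isometry to the case at hand, in which the derived equivalence is Arinkin's Fourier--Mukai autoequivalence of the Beauville--Mukai system, regarded as a moduli space of sheaves on a $K3$ surface. The starting point is the identification of $\overline{J}_C$ with the moduli space $M_v(S)$ of stable sheaves with Mukai vector $v=(0,L,1-g)\in\widetilde{H}(S,\BZ)$; under this identification Arinkin's normalized Poincar\'e sheaf $\CP$ matches, up to the symmetric Todd normalization fixed in~(\ref{cycles}), the Fourier--Mukai kernel built from a (quasi-)universal family on $S\times M_v$, which is the input of Beckmann's framework.

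Next, I would match the four algebraic classes with their Mukai-theoretic counterparts. The Mukai homomorphism $\lambda:v^\perp\to H^2(\overline{J}_C,\BQ)$ is a Hodge isometry onto its image, and under $\lambda$ the pullback hyperplane class $H=\pi^*H_{\BP^g}$ corresponds to the class $(1,0,1-g)\in v^\perp\subset\widetilde{H}(S,\BQ)$ (detecting the number of points in the support of a rank-zero sheaf of Mukai vector $v$), while the generalized theta divisor $\Theta$ corresponds to the unique class in $v^\perp$ of vanishing BBF-square pairing to $+1$ with this representative of $H$. The classes $\alpha,\beta$ are the canonical extended generators of $\widetilde{H}(\overline{J}_C,\BC)$, $\widetilde{q}$-dual to $1\in H^0$ and the point class in $H^{4g}$ respectively.

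With these identifications in place, Beckmann's formula for the action of $\mathfrak{F}^{\widetilde H}$ on the $\alpha,\beta$-part and on the $\lambda$-image of $v^\perp$ becomes a linear map on the rank-$4$ algebraic sublattice spanned by $\alpha,\beta,\Theta,H$; reading off its matrix in this basis yields the four displayed identities. The overall signs $(-1)^g$ and $(-1)^{g-1}$ track the cohomological shift $[g]=[\dim\overline{J}_C/2]$ implicit in the symmetric normalization of $\CP$, equivalently the factor $\mathrm{td}(-T)^{1/2}$ in~(\ref{cycles}).

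The main technical point, already handled in Beckmann's paper, is the reconciliation of two normalizations: the half-Todd distribution fixed in Section~\ref{Sec1.1} and the normalization built into Beckmann's statement of the Taelman isometry. Once this compatibility is settled, the rest is a short linear-algebra verification, which doubles as a consistency check against the isometry property $\widetilde q\circ \mathfrak{F}^{\widetilde H}=\widetilde q$ together with the pairings $q(\Theta)=q(H)=0$, $(\Theta,H)=1$, and $(\alpha,\beta)=-1$.
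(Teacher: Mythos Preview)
Your outline is essentially the paper's own proof: both defer to Beckmann's computation in \cite[Section~10.2]{Beckmann} of the Taelman isometry for this moduli space, and both pin down $\Theta$ by the same uniqueness argument (the unique isotropic class forming a hyperbolic plane with $H$), while noting that the Todd/normalization bookkeeping is handled by the Rozansky--Witten input in \cite[Section~3]{Beckmann}. One small slip to fix when you write it out: the vector $(1,0,1-g)$ you propose for $H$ does not lie in $v^\perp$, since with $v=(0,L,1-g)$ one computes $\langle (0,L,1-g),(1,0,1-g)\rangle = g-1$ in the Mukai pairing; the correct preimage of $H$ under $\lambda$ is a different isotropic vector (in Beckmann's notation the class $f$), but this does not affect the strategy.
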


\begin{proof}
    The action of (\ref{isom}) on the classes (\ref{alg}) is established in \cite[Section 10.2]{Beckmann}.\footnote{Here we corrected some signs in \cite[Section 10.2]{Beckmann} up to a global $\pm1$; the method of \cite{Beckmann} does not determine the precise sign for $g$ even.} The calculation uses the theory of extended Mukai vectors \cite{Beckmann, Markman2}, which further relies on the Rozansky--Witten theory governing the square root of the Todd class (see \cite[Section 3]{Beckmann}). For our purpose, it suffices to check that the class $\frac{1}{2}\lambda -\frac{g-1}{4}f$ in Beckmann's calculation coincides with~$\Theta$; this class actually is uniquely characterized by that it and $\sH$ form a hyperbolic lattice when $(S,L)$ is very general.
\end{proof}

We also remark that the class of the symplectic form $\sigma \in \widetilde{H}(\overline{J}_C, \BC)$ is fixed up to a sign by the action of \eqref{isom}.

\begin{prop} \label{fixsigma}
There is a constant $c_1 = 1$ for $g$ even, and $c_1 = 1$ or $-1$ for $g$ odd, such~that
\[
\mathfrak{F}^{\widetilde{H}}(\sigma) = c_1\sigma.
\]
\end{prop}

\begin{proof}
We consider the composition $\Phi_\CP \circ \Phi_\CP$ and its induced action on the extended Mukai lattice, which equals $\mathfrak{F}^{\widetilde{H}} \circ \mathfrak{F}^{\widetilde{H}}$ by the functoriality of \cite[Theorems 4.10 and 4.11]{Taelman}. Recall from~\cite{A2} that
\[
\Phi_\CP \circ \Phi_\CP = \nu^*(-) \otimes \pi^*\omega_B[-g]
\]
where $\nu: \overline{J}_C \to \overline{J}_C$ is the \emph{antisymplectic} involution induced by fiberwise duality. In particular, for $g$ odd we have
\begin{equation*} \label{eq:FFsigma}
\mathfrak{F}^{\widetilde{H}} \circ \mathfrak{F}^{\widetilde{H}} (\sigma) = \sigma.
\end{equation*}
The same holds for $g$ even which is due to the extra orientation appearing in \cite[Theorem~4.11]{Taelman}. In fact, by specialization we may assume $(S, L)$ very general so that the algebraic part of~$H^2(\overline{J}_C, \BC)$ is of rank $2$. The derived equivalence $\Phi_\CP \circ \Phi_\CP$ sends the structure sheaf~$\mathcal{O}_{\overline{J}_C}$ to a line bundle shifted by an even number, which after \cite[Lemma 4.2]{Beckmann} implies
\[
\mathrm{det}(\mathfrak{F}^{\widetilde{H}} \circ \mathfrak{F}^{\widetilde{H}}) = 1.
\]
This forces $\mathfrak{F}^{\widetilde{H}} \circ \mathfrak{F}^{\widetilde{H}}$ to act as $1$ (and not $-1$) on the transcendental part of $\widetilde{H}(\overline{J}_C, \BC)$ since the latter is of odd rank ($ = 21$).

For now we have $\mathfrak{F}^{\widetilde{H}}(\sigma) = \pm\sigma$. To see that the sign is positive for $g$ even, we repeat the argument above with $\Phi_\CP$. Again by \cite[Lemma 4.2]{Beckmann} we have
\[
\mathrm{det}(\mathfrak{F}^{\widetilde{H}}) = 1
\]
since $\Phi_\CP$ sends a sufficiently ample line bundle on $\overline{J}_C$ to a sheaf of positive rank. Comparing with the formulas of Proposition \ref{prop2.9}, we find again that $\mathfrak{F}^{\widetilde{H}}$ must act as $1$ on the transcendental part of $\widetilde{H}(\overline{J}_C, \BC)$.
\end{proof}

\subsection{Solving Question \ref{Q1.3}}\label{Sect2.7}

In this section we prove Proposition \ref{prop2.1}, which solves Question~\ref{Q1.3}. We first note some lemmas.

We consider the class of the symplectic form $\sigma \in H^2(\overline{J}_C, \BC)$. For any $\mu \in \mathrm{HT}^2(\overline{J}_C)$, the associated element $e'_\mu(\sigma)$ defines naturally an element in the extended Mukai lattice. This induces an embedding of vector spaces 
\[
\iota: \mathrm{HT}^2(\overline{J}_C) \hookrightarrow \widetilde{H}(\overline{J}_C, \BC), \quad \mu \mapsto e'_\mu(\sigma)
\]
whose image is $\BC \alpha \oplus H^{1,1}(\overline{J}_C)  \oplus \BC \beta$; see \cite[Equation (6.10)]{Markman2}. The identification
\begin{equation}\label{iden}
\mathrm{HT}^2(\overline{J}_C) = \BC \alpha \oplus H^{1,1}(\overline{J}_C)  \oplus \BC \beta
\end{equation}
induced by $\iota$ allows us to view elements on the right-hand side of (\ref{iden}) as elements in $\mathrm{HT}^2(\overline{J}_C)$; this further allows us to write 
\[
e'_\eta \in \mathfrak{g}(\overline{J}_C), \quad \eta \in \BC \alpha \oplus H^{1,1}(\overline{J}_C)  \oplus \BC \beta.
\]
The $\mathfrak{sl}_2$-triple of Example \ref{example} can now be expressed in terms of $\alpha$.

\begin{lem}[{\cite[Lemma 2.9]{Taelman}}]\label{lem2.10}
We have 
\[
e'_\alpha = f_{\sigma} \in \mathfrak{g}(\overline{J}_C).
\]
\end{lem}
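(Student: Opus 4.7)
The plan is to identify both $e'_\alpha$ and $f_\sigma$ as the same explicit operator on $H^*(\overline{J}_C,\BC)$, namely contraction with the Poisson bivector dual to $\sigma$, and then conclude by uniqueness of the lowering operator in an $\mathfrak{sl}_2$-triple with prescribed $e$ and $h$.

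First, I would pin down $\iota^{-1}(\alpha) \in \mathrm{HT}^2(\overline{J}_C)$. Since $\iota(\mu) = e'_\mu(\sigma)$ and $\alpha$ lies in the degree-$0$ summand of $\widetilde{H}(\overline{J}_C,\BC)$, the preimage must lie in the direct summand $H^0(\overline{J}_C, \wedge^2 T_{\overline{J}_C})$ of $\mathrm{HT}^2(\overline{J}_C)$ — this is the only piece whose contraction against $\sigma \in H^{2,0}$ can land all the way in $H^{0,0}(\overline{J}_C) = \BC$. The natural element there is the Poisson bivector $\tau$, defined as the inverse of the isomorphism $T_{\overline{J}_C} \xrightarrow{\,\sigma\,} \Omega^1_{\overline{J}_C}$, normalized so that $\tau \lrcorner \sigma = 1$. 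The convention in \cite{Markman2, Taelman} that distinguishes $\alpha$ as the degree-$0$ generator (with $\widetilde{q}(\alpha)=0$ and $(\alpha,\beta) = -1$) is precisely the one that makes $\iota(\tau) = \alpha$, so $\iota^{-1}(\alpha) = \tau$ on the nose.

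Second, I would verify that $e'_\tau = f_\sigma$. Both sides have holomorphic bidegree $(-2,0)$ and act on $H^*(\overline{J}_C,\BC) = \mathrm{HT}^*(\overline{J}_C)$. Working pointwise in a holomorphic Darboux chart, where $\sigma = \sum dz_i \wedge dw_i$ and correspondingly $\tau = \sum \partial_{w_i}\wedge \partial_{z_i}$, a direct combinatorial calculation on the exterior algebra yields the symplectic analog of the Kähler $[L,\Lambda]$ relation:
\[
[\sigma \wedge - ,\, \tau \lrcorner -] = (p - n)\,\mathrm{id} \qquad\text{on } \Omega^p_{\overline{J}_C}.
\]
Under the HKR identification $\wedge^p T_{\overline{J}_C} \simeq \Omega^p_{\overline{J}_C}$ induced by $\sigma$, this descends to the commutation relation $[e_\sigma,\, e'_\tau] = h_{\mathrm{hol}}$ on cohomology. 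Therefore $(e_\sigma, h_{\mathrm{hol}}, e'_\tau)$ is an $\mathfrak{sl}_2$-triple inside $\mathfrak{g}(\overline{J}_C)$. Since the lowering operator completing a given $(e,h)$ pair to an $\mathfrak{sl}_2$-triple is unique, and both $e'_\alpha = e'_\tau$ and $f_\sigma$ play that role, we conclude $e'_\alpha = f_\sigma$.

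The main technical obstacle is not in the Lie-theoretic skeleton but in the bookkeeping of normalizations: one must check that the scalar fixing $\alpha$ in \cite{Markman2, Taelman} matches the scalar implicit in the definition of the Poisson bivector $\tau$, so that the equality holds exactly rather than up to a harmless nonzero factor. Any such mismatch would yield $e'_\alpha = c\cdot f_\sigma$ for an explicit $c$, which could then be absorbed into the normalization of $\alpha$ via the pairing $(\alpha,\beta)=-1$.
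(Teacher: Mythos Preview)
The paper does not prove this lemma itself; it is stated with a bare citation to \cite[Lemma~2.9]{Taelman}. Your argument is essentially the standard one underlying that reference and is correct: identify $\iota^{-1}(\alpha)$ with the Poisson bivector $\tau$, verify $[e_\sigma, e'_\tau] = h_{\mathrm{hol}}$ by a local Darboux computation, and invoke uniqueness of the lowering operator.

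Two small points you leave implicit are worth spelling out. First, to conclude that $(e_\sigma, h_{\mathrm{hol}}, e'_\tau)$ is an $\mathfrak{sl}_2$-triple you also need $[h_{\mathrm{hol}}, e_\sigma] = 2e_\sigma$ and $[h_{\mathrm{hol}}, e'_\tau] = -2e'_\tau$; both are immediate from the holomorphic bidegrees $(+2,0)$ and $(-2,0)$. Second, uniqueness of $f$ given $(e,h)$ is not automatic in an arbitrary Lie algebra, but it holds here because $\mathrm{End}(H^*(\overline{J}_C,\BC))$ is a finite-dimensional $\mathfrak{sl}_2$-module under the adjoint action: the difference $f_\sigma - e'_\tau$ would be a highest-weight vector (annihilated by $\mathrm{ad}\,e_\sigma$) of weight $-2$, hence zero. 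Your caveat on normalizations is well placed; in Taelman's conventions things are arranged so that no stray constant appears.
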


We thus obtain an $\mathfrak{sl}_2$-triple
\begin{equation} \label{eq:twosl2}
(e'_\alpha, -h_{\mathrm{hol}}, f'_{\alpha}) = (f_\sigma, -h_{\mathrm{hol}}, -e_{\sigma})\subset \mathfrak{g}(\overline{J}_C).
\end{equation}

Next, since the action of the Fourier transform \eqref{isom} on the extended Mukai lattice is induced by a Hodge isometry, it preserves (\ref{iden}). We have the following comparison.


\begin{lem}\label{lem2.11}
Let $c_1$ be the constant in Proposition \ref{fixsigma}. For any $\eta$ in (\ref{iden}), we have 
\[
\mathfrak{F}^{\mathfrak{g}}(e'_\eta) = c_1 e'_{\mathfrak{F}^{\widetilde{H}}(\eta)} \in \mathfrak{g}(\overline{J}_C).\]
\end{lem}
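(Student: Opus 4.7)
The plan is to exploit the compatibility between $\mathfrak{F}^{\mathfrak{g}}$ and $\mathfrak{F}^{\widetilde{H}}$ built into Taelman's theorem: under the canonical isomorphism $\mathfrak{g}(\overline{J}_C) \simeq \mathfrak{so}(\widetilde{H}(\overline{J}_C, \BC))$, the Lie algebra automorphism $\mathfrak{F}^{\mathfrak{g}}$ is realized as conjugation by the Hodge isometry $\mathfrak{F}^{\widetilde{H}}$. Since $\mathfrak{F}^{\widetilde{H}}$ preserves the Hodge decomposition and $\widetilde{H}^{2,0}(\overline{J}_C) = \BC\sigma$ is one-dimensional, I may write $\mathfrak{F}^{\widetilde{H}}(\sigma) = \lambda\sigma$ for some $\lambda \in \BC^*$; the isometry condition forces $|\lambda| = 1$. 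The target is to identify the lemma's constant $c$ with $\lambda^{-1}$ and then to argue $\lambda = \pm 1$.

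The first step is to verify the identity after evaluating at the distinguished vector $\sigma \in \widetilde{H}$. Using the conjugation description $\mathfrak{F}^{\mathfrak{g}}(e'_\eta) = \mathfrak{F}^{\widetilde{H}} \circ e'_\eta \circ (\mathfrak{F}^{\widetilde{H}})^{-1}$ together with the defining relations $e'_\eta(\sigma) = \iota(\eta) = \eta$ and $e'_{\mathfrak{F}^{\widetilde{H}}(\eta)}(\sigma) = \mathfrak{F}^{\widetilde{H}}(\eta)$, a short computation gives
\[
\mathfrak{F}^{\mathfrak{g}}(e'_\eta)(\sigma) \;=\; \lambda^{-1}\mathfrak{F}^{\widetilde{H}}(\eta) \;=\; \lambda^{-1}\cdot e'_{\mathfrak{F}^{\widetilde{H}}(\eta)}(\sigma),
\]
forcing $c = \lambda^{-1}$ once the lemma's identity is known to hold. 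To upgrade agreement on $\sigma$ to an equality of operators in $\mathfrak{g}$, I would use that the subspace $\{e'_\mu : \mu \in \mathrm{HT}^2\} \subset \mathfrak{g}$ is canonically attached to the derived category via the Hochschild action and is therefore preserved by $\mathfrak{F}^{\mathfrak{g}}$; combined with the injectivity of $\mu \mapsto e'_\mu(\sigma) = \iota(\mu)$, the value on $\sigma$ then determines the element uniquely.

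To finish, I would argue $\lambda = \pm 1$ by rationality. Taelman's $\mathfrak{F}^{\widetilde{H}}$ is defined over $\BQ$, and for a very general pair $(S, L)$ the transcendental summand of $\widetilde{H}(\overline{J}_C, \BQ)$ is an irreducible rational Hodge structure of $K3$-type with endomorphism ring $\BQ$, so its rational Hodge isometries act on the one-dimensional $\widetilde{H}^{2,0}$ by $\pm 1$; specialization extends this to arbitrary irreducible $L$. The step I expect to be the main obstacle is the stability claim above---showing that the subspace $\{e'_\mu : \mu \in \mathrm{HT}^2\}$ is preserved by $\mathfrak{F}^{\mathfrak{g}}$---since this requires matching the contraction action of $\mathrm{HT}^2$ on $H^*(\overline{J}_C, \BC)$ with a derived-invariant structure inside the LLV algebra. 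A safer alternative route would be to verify the identity directly on a basis $\alpha, \Theta, H, \beta$ using Beckmann's formulas in Proposition \ref{prop2.9} together with Lemma \ref{lem2.10}.
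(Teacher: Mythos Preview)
Your outline is essentially a reconstruction of the argument the paper defers to (Markman's Lemma 6.5), and the architecture is right: the derived invariance of the Hochschild action on Hochschild homology, transported through HKR, is exactly what guarantees that $\mathfrak{F}^{\mathfrak{g}}$ preserves the subspace $\{e'_\mu : \mu \in \mathrm{HT}^2\}$; and once you know $\mathfrak{F}^{\mathfrak{g}}(e'_\eta)$ lands in that subspace, evaluating at $\sigma$ and using the injectivity of $\iota$ pins it down. So the step you flag as the ``main obstacle'' is indeed the substance of the lemma, and you have correctly located it.

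Where your proposal takes an unnecessary and somewhat fragile detour is the argument that $\lambda = \pm 1$ via rationality and the endomorphism field of the transcendental Hodge structure. For a very general pair $(S,L)$ this works, but your specialization step is not innocent: you would need to argue that $\lambda$ varies continuously in the moduli of polarized $K3$ surfaces and that this moduli is connected, which is true but adds baggage. The cleaner route---and the one implicit in ``the construction of $\mathfrak{F}^{\widetilde H}$''---is structural rather than Hodge-theoretic: Taelman builds $\mathfrak{F}^{\widetilde H}$ from the action of $\Phi^H$ on the Verbitsky component, and this determines an isometry of $\widetilde H$ only up to a global sign (since $-\mathrm{id}$ on $\widetilde H$ induces the identity on the Verbitsky component). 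Once you know $\mathfrak{F}^{\mathfrak g}(e'_\eta) = e'_{\eta'}$ with $\eta \mapsto \eta'$ linear, both this map and $\mathfrak{F}^{\widetilde H}$ are lifts of the \emph{same} automorphism of the Verbitsky component, hence agree up to the sign ambiguity. No Hodge-endomorphism input is needed.

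A minor point on your fallback: checking the identity on $\alpha, \Theta, H, \beta$ alone does not prove the lemma as stated, since these four classes do not span $\BC\alpha \oplus H^{1,1}(\overline{J}_C) \oplus \BC\beta$. It would, however, suffice for every application of the lemma later in the paper.
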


\begin{proof}
This follows from \cite[Lemma 6.5]{Markman2} together with Proposition \ref{fixsigma}.
\end{proof}

Finally, we recall an important involution on $H^*(\overline{J}_C, \BC)$ which lies in the LLV algebra. The symplectic form $\sigma$ induces an operator 
\[
\Omega: H^*(\overline{J}_C, \BC) \to H^*(\overline{J}_C, \BC)
\]
which acts on the Hodge components $H^{i,j}(\overline{J}_C)$ as Serre duality
\[
\Omega^{i,j}: H^{i,j}(\overline{J}_C) \xrightarrow{\simeq} H^{2g-i,j}(\overline{J}_C).
\]
This operator relates the two types of $\mathfrak{sl}_2$-triples considered by Looijenga--Lunts--Verbitsky \cite{LL, Ver95} and Taelman \cite{Taelman} by conjugation. We collect some useful facts from \cite[Section 6.1]{Markman2} and \cite{Ver99}.

\begin{lem}\label{lem2.12}
We have the following properties of $\Omega$.
\begin{enumerate}
    \item[(a)] The operator 
    \[
    \Omega \in \mathrm{End}\left( H^*(\overline{J}_C, \BC) \right)
    \]
    is an involution lying in $\mathfrak{g}(\overline{J}_C)$ which conjugates $h,h'$, \emph{i.e.},
    \[
    \Omega \circ h \circ \Omega^{-1} = h'.
    \]
    \item[(b)] For any element $\mu \in \mathrm{HT}^2(\overline{J}_C)$, which yields an element $\mu\in H^2(\overline{J}_C, \BC)$ via the natural identification $\mathrm{HT}^2(\overline{J}_C)= H^2(\overline{J}_C, \BC)$ induced by $\sigma$, the operator $\Omega$ conjugates $e_{\mu}$ and~$e'_\mu$, \emph{i.e.},
    \[
    \Omega \circ e_\mu \circ \Omega^{-1} = e'_{\mu}.
    \]
    \item[(c)] Under the notation of (b), for any $\eta \in H^{1,1}(\overline{J}_C)$ we have
    \[
    \Omega\circ e_{\eta}\circ \Omega^{-1} = [f_{\sigma}, e_{\eta}] \in \mathfrak{g}(\overline{J}_C).
    \]
\end{enumerate}
\end{lem}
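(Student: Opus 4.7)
My plan is to exhibit $\Omega$ as a Weyl-group element of the symplectic $\mathfrak{sl}_2$-triple $(e_\sigma, h_{\mathrm{hol}}, f_\sigma) \subset \mathfrak{g}(\overline{J}_C)$ of Example \ref{example}, and then derive all three statements by tracking its adjoint action. Since both $e_\sigma$ and $f_\sigma$ lie in $\mathfrak{g}(\overline{J}_C)$, exponentiating gives an action of $\mathrm{SL}_2$ on $H^*(\overline{J}_C, \BC)$ normalizing the LLV algebra, and the Weyl reflection $W := \exp(e_\sigma)\exp(-f_\sigma)\exp(e_\sigma)$ interchanges weight spaces of $h_{\mathrm{hol}}$. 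Since $h_{\mathrm{hol}}$ acts on $H^{p,q}(\overline{J}_C)$ by $(p-g)\,\mathrm{id}$, the reflection $W$ must send $H^{p,q}$ to $H^{2g-p,q}$, i.e.\ it coincides (up to an explicit scalar on each Hodge piece, which can be normalized) with Serre duality $\Omega$. This simultaneously proves the three assertions in (a): $\Omega$ is an involution (since $W^2$ acts as a central element of $\mathrm{SL}_2$, and compatibility with bidegrees pins down the sign), $\Omega \in \mathfrak{g}(\overline{J}_C)$ (because it belongs to the adjoint $\mathrm{SL}_2$, which is contained in the Lie group of $\mathfrak{g}(\overline{J}_C)$), and the conjugation $\Omega h \Omega^{-1} = h'$ is verified by direct bidegree chasing: on $H^{p,q}$ one has $\Omega^{-1}(H^{p,q}) \subset H^{2g-p,q}$ where $h$ acts by $(2g-p+q-2g) = (q-p)$, which is exactly the action of $h'$.

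For part (b), I would use the fact that $\Omega$ lies in the adjoint image of the symplectic $\mathrm{SL}_2$ and hence commutes in a controlled way with every Verbitsky component of $\mathfrak{g}(\overline{J}_C)$. Decompose any $\mu \in \mathrm{HT}^2(\overline{J}_C) = \BC\alpha \oplus H^{1,1}(\overline{J}_C) \oplus \BC\beta$ under (\ref{iden}). For $\mu = \sigma$ (generating the pure type-$(2,0)$ component in the other identification) the identity $\Omega e_\sigma \Omega^{-1} = e'_\sigma = f_\sigma$ is the very content of Lemma \ref{lem2.10}. For $\eta \in H^{1,1}(\overline{J}_C)$, expand $\mathrm{Ad}(\exp(e_\sigma))\mathrm{Ad}(\exp(-f_\sigma))\mathrm{Ad}(\exp(e_\sigma))(e_\eta)$ using the Verbitsky relations of Proposition \ref{prop2.3} applied to the pair of triples generated by $\sigma$ and $\eta$ (which are mutually orthogonal under $\widetilde q$ after projecting away from $H$); the result collapses to the single contraction operator $e'_\eta$. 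Finally, the case of the anti-holomorphic component is treated symmetrically, or reduced to the holomorphic one by complex conjugation as in the proof of Proposition \ref{prop2.4}.

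Given (b), part (c) reduces to the purely Lie-algebraic identity
\[
e'_\eta = [f_\sigma, e_\eta] \in \mathfrak{g}(\overline{J}_C), \qquad \eta \in H^{1,1}(\overline{J}_C).
\]
This is a direct consequence of the Verbitsky relations. Indeed, after projecting $\eta$ onto the hyperbolic component $\{\sigma, \overline{\sigma}\}^\perp$, the classes $\sigma$ and $\eta$ can be put in the normalized form of Section \ref{sec2.4} (with $\widetilde q(\sigma) = 0$ and $(\sigma, \eta) = 0$), and the commutator $[f_\sigma, e_\eta]$ can be read off as the corresponding Verbitsky $K_{ij}$-element. Matching this with the contraction $e'_\eta$ via the Taelman identification (\ref{iden}) completes the step.

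The main obstacle is Step 3 above, namely the explicit identification $\Omega e_\mu \Omega^{-1} = e'_\mu$ for mixed-type $\mu$. The subtlety is bookkeeping: one must verify that the adjoint $W$-action intertwines the geometric cup-product operators with Taelman's Hochschild-contraction operators with the correct scalar normalizations, and that this intertwining is compatible with the isomorphism $\iota: \mathrm{HT}^2(\overline{J}_C) \hookrightarrow \widetilde H(\overline{J}_C, \BC)$. Modulo these normalization checks (carried out for $K3^{[g]}$-type geometries in \cite{Markman2, Ver99}), the argument is a formal consequence of the symplectic $\mathfrak{sl}_2$ acting inside the LLV algebra.
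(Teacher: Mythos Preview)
Your proposal is essentially correct and aligned with the paper's approach: the paper simply cites \cite[Section 6.1]{Markman2} for (a,b) and \cite[Proposition 9.7]{Ver99} for (c), and what you have sketched is precisely the content of those references---realizing $\Omega$ as the Weyl element of the symplectic $\mathfrak{sl}_2$-triple and then reading off the conjugation identities. One caveat: the normalization issue you flag is not entirely innocuous, since if $\Omega$ and $W$ differ by bidegree-dependent scalars then conjugation by them can differ on operators that shift bidegree (such as $e'_\mu$); the fact that these scalars are arranged so that (b) holds on the nose is exactly what is verified in \cite{Markman2, Ver99}, so your deferral to those references at the end is both honest and necessary.
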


\begin{proof}
    Parts (a, b) are in \cite[Section 6.1]{Markman2}, and (c) is given by \cite[Proposition 9.7]{Ver99}.
\end{proof}

\begin{cor}\label{cor2.13}
For any $\eta \in H^{1,1}(\overline{J}_C)$, we have
\[
e_\eta = [e_\sigma, e'_\eta] \in \mathfrak{g}(\overline{J}_C).
\]
\end{cor}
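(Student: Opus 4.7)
The plan is to first extract the Fourier-conjugate relation $e'_\eta = [f_\sigma, e_\eta]$ from parts (b) and (c) of Lemma~\ref{lem2.12}, and then to deduce the corollary by a single application of the Jacobi identity inside the symplectic $\mathfrak{sl}_2$-triple of Example~\ref{example}.

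For the first ingredient, I would apply Lemma~\ref{lem2.12}(b) with $\mu = \eta \in H^{1,1}(\overline{J}_C) \subset H^2(\overline{J}_C, \BC)$, which gives $\Omega \circ e_\eta \circ \Omega^{-1} = e'_\eta$; Lemma~\ref{lem2.12}(c) gives $\Omega \circ e_\eta \circ \Omega^{-1} = [f_\sigma, e_\eta]$. Comparing the two right-hand sides yields
\[
e'_\eta = [f_\sigma, e_\eta] \in \mathfrak{g}(\overline{J}_C),
\]
so the identity $[e_\sigma, e'_\eta] = e_\eta$ becomes a question about a single $\mathfrak{sl}_2$-representation.

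Concretely, the Jacobi identity gives
\[
[e_\sigma, [f_\sigma, e_\eta]] = [[e_\sigma, f_\sigma], e_\eta] + [f_\sigma, [e_\sigma, e_\eta]].
\]
The second commutator vanishes because $e_\sigma$ and $e_\eta$ are cup products with classes of even degree and hence commute as endomorphisms of $H^*(\overline{J}_C, \BC)$. By Example~\ref{example} one has $[e_\sigma, f_\sigma] = h_{\mathrm{hol}}$, reducing the expression to $[h_{\mathrm{hol}}, e_\eta]$. Since $e_\eta$ shifts the Hodge bidegree by $(1,1)$ and $h_{\mathrm{hol}}$ acts on $H^{p,q}(\overline{J}_C)$ as multiplication by $p - g$, a direct check of eigenvalues gives $[h_{\mathrm{hol}}, e_\eta] = e_\eta$, as required.

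The argument is essentially formal once Lemma~\ref{lem2.12} is in place, so I do not anticipate a substantive obstacle. The only point that genuinely requires care is the normalization of the symplectic triple $(e_\sigma, h_{\mathrm{hol}}, f_\sigma)$, which must satisfy $[e_\sigma, f_\sigma] = h_{\mathrm{hol}}$ for the overall scalar in $[e_\sigma, e'_\eta] = e_\eta$ to come out as $+1$ rather than some other multiple; this is the one piece of bookkeeping I would verify explicitly against the conventions fixed in Example~\ref{example} and Lemma~\ref{lem2.10}.
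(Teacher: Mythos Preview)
Your proposal is correct and in fact streamlines the paper's argument. Both the paper and you begin the same way, using Lemma~\ref{lem2.12}(b,c) to reduce the corollary to the identity $e_\eta = [e_\sigma,[f_\sigma,e_\eta]]$. From there the approaches diverge: the paper chooses an auxiliary orthogonal triple $\eta_1,\eta_2,\eta_3$ satisfying~\eqref{eq:condition2.4}, writes $e_\sigma = e_{\sigma_{23}}$, $f_\sigma = f_{\sigma_{23}}$, expands $[f_\sigma,e_\eta]$ in terms of the $K_{ij}$, and then applies the Verbitsky relations of Proposition~\ref{prop2.3} to recover $e_{\eta_1}$. You instead apply the Jacobi identity directly and use only two soft facts: that cup products commute (so $[e_\sigma,e_\eta]=0$), and that $h_{\mathrm{hol}}$ is the grading operator on the holomorphic index (so $[h_{\mathrm{hol}},e_\eta]=e_\eta$ because $\eta\in H^{1,1}$). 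Your route avoids both the reduction to a special basis and the explicit Verbitsky bookkeeping, at the cost of invoking the Hodge-theoretic description of $h_{\mathrm{hol}}$ from Example~\ref{example}; the paper's route stays purely inside the algebraic model of Section~\ref{sec2.4}, which is consistent with how the surrounding arguments (Propositions~\ref{prop2.4} and~\ref{prop2.5}) are set up. The normalization concern you flag is resolved by the convention in Example~\ref{example}: the triple is written as $(e_\sigma,h_{\mathrm{hol}},f_\sigma)$, so $[e_\sigma,f_\sigma]=h_{\mathrm{hol}}$ with coefficient~$+1$.
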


\begin{proof}
By Lemma \ref{lem2.12}(b, c), it suffices to show that
\begin{equation}\label{iden2}
e_\eta = [e_\sigma, [f_\sigma, e_\eta]].
\end{equation}
It suffices to treat the case where we have three classes $\eta_1, \eta_2, \eta_3$ satisfying \eqref{eq:condition2.4} with
\[
\eta = \eta_1,\quad e_{\sigma} = e_{\sigma_{23}},\quad f_{\sigma} = f_{\sigma_{23}}.
\]
Under this assumption, we calculate the right-hand side of (\ref{iden2}):
\begin{align*}
    [e_\sigma, [f_\sigma, e_\eta]] &= \left[  \frac{1}{2}(e_{\eta_2} +\mathfrak{i}e_{\eta_3}), \frac{1}{2}(-K_{12} + \mathfrak{i} K_{13}) \right]  \\
    & = \frac{1}{4} ( -[e_{\eta_2}, K_{12}] - [e_{\eta_3}, K_{13}])\\
    & = \frac{1}{4}(2e_{\eta_1}+2e_{\eta_1}) = e_{\eta},
\end{align*}
where we used the Verbitsky relations in Proposition \ref{prop2.3}.
\end{proof}

\begin{proof}[Proof of Proposition \ref{prop2.1}]
Now we are ready to prove Proposition \ref{prop2.1}. We first introduce some notation for convenience. We set 
\[
\overline{\Theta} : = \mathfrak{F}^{\widetilde{H}}(\alpha) \in \widetilde{H}(\overline{J}_C, \BC).
\]
We will use ``$\mathrm{cst}$'' to denote a constant which is not important for us. So Proposition \ref{prop2.9} reads
\begin{equation}\label{EML1}
\overline{\Theta} = -c_0\Theta + \mathrm{cst}\cdot \beta, \quad \mathfrak{F}^{\widetilde{H}}(\overline{\Theta}) = -\alpha + \mathrm{cst}\cdot \sH.
\end{equation}

Since all the calculations in this section only concern the four classes $\alpha, \beta, \Theta, \sH$, we may use the conjugation of Lemma \ref{lem2.12} to reduce them to the Lie algebra model of Section \ref{sec2.4} with the four classes $\eta_1, \eta_2, \eta_3, \eta_4$. More precisely, we may assume that after applying the conjugation 
\begin{equation}\label{conju}
\Omega \circ (-) \circ \Omega^{-1}: \mathfrak{g}(\overline{J}_C) \rightarrow \mathfrak{g}(\overline{J}_C),
\end{equation}
\begin{enumerate}
    \item[(i)] $(e'_{\alpha}, f'_{\alpha}), -e'_\beta$ become $(e_{\sigma_{12}}, f_{\sigma_{12}}), e_{\overline{\sigma}_{12}}$ respectively;
    \item[(ii)] $e'_{\overline{\Theta}}, -c_0e'_{\sH}$ become $e_{\sigma_{34}}, e_{\overline{\sigma}_{34}}$ respectively.
    \end{enumerate}
Furthermore, we use (\ref{conju}) to define the operators $f'_\beta, f'_{\overline{\Theta}}, f'_{\sH}$, completing the $\mathfrak{sl}_2$-triples with the raising operators $e'_\beta, e'_{\overline{\Theta}}, e'_{\sH}$ respectively. For example
\[
f'_{\overline{\Theta}} := \Omega \circ f_{\sigma_{34}} \circ \Omega^{-1}, \quad \mathrm{etc}.
\]

Recall the operators (\ref{operators}). Combining Corollary \ref{cor2.13} with \eqref{eq:twosl2} and (\ref{EML1}), we have 
      \[
      e_0= [e_\sigma, e'_{\Theta}] = -[f'_\alpha, e'_\Theta] = -[f'_\alpha, -c_0e'_{\overline{\Theta}}]+ \mathrm{cst}\cdot [f'_{\alpha}, e'_\beta].
    \] 
By Proposition \ref{prop2.4}(c) and (\ref{conju}), we obtain immediately that $[f'_{\alpha}, e'_\beta] =0$. Therefore
\begin{equation}\label{e_0}
e_0= c_0[f'_\alpha, e'_{\overline{\Theta}}].
\end{equation}
Since the action of $\mathfrak{F}^{\mathfrak{g}}$ on $\mathfrak{g}(\overline{J}_C)$ is linear on the $e'$-operators by Lemma \ref{lem2.11} and it preserves~$h'$, by passing through (\ref{conju}) this action is also linear on the $e$-operators and it preserves $h$; see Lemma~\ref{lem2.12}(a, b). Therefore it also acts linearly on the $f$-operators:
\[
 f_{\sigma_{12}} \mapsto f_{\sigma_{34}}, \quad f_{\sigma_{34}} \mapsto -f_{\sigma_{12}} + \mathrm{cst}\cdot f_{\overline{\sigma}_{34}}.
\]
In particular, we deduce that
\[
\mathfrak{F}^{\mathfrak{g}}(f'_\alpha) = c_1f'_{\overline{\Theta}}, \quad \mathfrak{F}^{\mathfrak{g}}(f'_{\overline{\Theta}}) = -c_1f'_\alpha + \mathrm{cst}\cdot f'_{\sH}.
\]
Here we used (\ref{EML1}) in the equations above.

Now we obtain from (\ref{e_0}) that
\begin{align*}
    f_0= -\mathfrak{F}^\mathfrak{g}(e_0) &=-c_0[\mathfrak{F}^\mathfrak{g}(f'_\alpha), \mathfrak{F}^\mathfrak{g}(e'_{\overline{\Theta}})] \\
    &= -c_0[c_1f'_{\overline{\Theta}}, -c_1e'_\alpha]+ \mathrm{cst}\cdot [f'_{\overline{\Theta}}, e'_{\sH}] \\
    & = c_0[f'_{\overline{\Theta}},  e'_\alpha].
\end{align*}
Here the vanishing $[f'_{\overline{\Theta}}, e'_{\sH}]=0$ follows again from Proposition \ref{prop2.4}(c) via (\ref{conju}). At this point, we have already shown that $f_0$ is of degree $-2$, since $f'_{\overline{\Theta}}, e'_\alpha$ are of degrees $0$, $-2$ respectively; more precisely
\[
f_0: H^{i,j}(\overline{J}_C) \to H^{i-1,j-1}(\overline{J}_C).
\]

Finally, the operators $(e_0, h_0=[e_0,f_0], f_0)$ form an $\mathfrak{sl}_2$-triple by Proposition \ref{prop2.5}.
\end{proof}

\subsection{Solving Question \ref{Q1.4}}
We solve Question \ref{Q1.4} and complete the proof of Theorems \ref{thm0.2} and \ref{thm0.4}.

We first note that the $\mathfrak{sl}_2$-triple $(e_0, h_0, f_0)$ is Fourier-conjugate.

\begin{prop}
    We have
    \[
\mathfrak{F}^\mathfrak{g}(e_0)=-f_0,\quad \mathfrak{F}^\mathfrak{g}(f_0)=-e_0,\quad \mathfrak{F}^\mathfrak{g}(h_0)=-h_0.
\]
\end{prop}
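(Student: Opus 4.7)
The plan is to observe that essentially all the work has been done inside the proof of Proposition \ref{prop2.1}; what remains is to assemble three short arguments using the formulas already derived there.

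The second identity $\mathfrak{F}^\mathfrak{g}(f_0) = -e_0$ is essentially a tautology. By Theorem \ref{thm2.2} the Lie algebra isomorphism is $\mathfrak{F}^\mathfrak{g}(X) = \mathfrak{F}\circ X\circ \mathfrak{F}^{-1}$, and the inverse equivalence $\Phi_{\CP^{-1}}$ induces $(\mathfrak{F}^\mathfrak{g})^{-1}$ on $\mathfrak{g}(\overline{J}_C)$. The defining formula $f_0=-\mathfrak{F}^{-1}\circ e_0\circ \mathfrak{F}$ therefore reads $-f_0 = (\mathfrak{F}^\mathfrak{g})^{-1}(e_0)$; applying $\mathfrak{F}^\mathfrak{g}$ to both sides gives $\mathfrak{F}^\mathfrak{g}(f_0)=-e_0$.

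The first identity $\mathfrak{F}^\mathfrak{g}(e_0)=-f_0$ is the main point, and it follows by reusing the explicit bracket expressions
\[
e_0 = (-1)^g [f'_\alpha, e'_{\overline{\Theta}}], \qquad f_0 = (-1)^g [f'_{\overline{\Theta}}, e'_\alpha]
\]
from the proof of Proposition \ref{prop2.1}, together with the action of $\mathfrak{F}^\mathfrak{g}$ on the generators,
\[
\mathfrak{F}^\mathfrak{g}(f'_\alpha) = c\, f'_{\overline{\Theta}}, \qquad \mathfrak{F}^\mathfrak{g}(e'_{\overline{\Theta}}) = -c\, e'_\alpha + \mathrm{cst}\cdot e'_H
\]
(the second of these uses Lemma \ref{lem2.11} applied to $\mathfrak{F}^{\widetilde{H}}(\overline{\Theta}) = -\alpha+\mathrm{cst}\cdot H$ from Proposition \ref{prop2.9}). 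Since $\mathfrak{F}^\mathfrak{g}$ is a Lie algebra homomorphism, one computes
\[
\mathfrak{F}^\mathfrak{g}(e_0) = (-1)^g [cf'_{\overline{\Theta}},\, -c\, e'_\alpha + \mathrm{cst}\cdot e'_H] = -(-1)^g[f'_{\overline{\Theta}}, e'_\alpha] = -f_0,
\]
where the cross term $[f'_{\overline{\Theta}}, e'_H]$ vanishes by Proposition \ref{prop2.4}(c) transported through the conjugation \eqref{conju}. I expect this cancellation to be the only subtle step: it is precisely the same vanishing that trimmed the formula for $f_0$ to its final form, which is why $\mathfrak{F}^\mathfrak{g}(e_0)$ and $-f_0$ end up equal on the nose rather than up to a correction supported on $e'_H, f'_H$.

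Finally, the third identity is automatic: applying $\mathfrak{F}^\mathfrak{g}$ to the commutator and using the first two identities,
\[
\mathfrak{F}^\mathfrak{g}(h_0) = [\mathfrak{F}^\mathfrak{g}(e_0), \mathfrak{F}^\mathfrak{g}(f_0)] = [-f_0, -e_0] = -[e_0, f_0] = -h_0. \qedhere
\]
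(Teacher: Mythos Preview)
Your proof is correct and takes essentially the same approach as the paper: one of the first two identities is immediate from the definition of $f_0$, the other is obtained by applying $\mathfrak{F}^\mathfrak{g}$ to the bracket formulas established in the proof of Proposition~\ref{prop2.1} and killing a cross term via Proposition~\ref{prop2.4}(c), and the third follows from the first two. You and the paper swap which of the two is the tautology and which needs the computation, reflecting a harmless convention choice for whether $\mathfrak{F}^\mathfrak{g}$ acts as $\mathfrak{F}\circ(-)\circ\mathfrak{F}^{-1}$ or its inverse; the substance is identical.
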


\begin{proof}
The first equation follows from the definition of $f_0$. The second equation is obtained similarly as in the proof of Proposition \ref{prop2.1}:
\begin{align*}
    \mathfrak{F}^\mathfrak{g}(f_0) &=c_0[\mathfrak{F}^\mathfrak{g}(f'_{\overline{\Theta}}), \mathfrak{F}^\mathfrak{g}(e'_{\alpha})] \\
    &= c_0[-c_1f'_{\alpha}, c_1e'_{\overline{\Theta}}]+ \mathrm{cst}\cdot [f'_{\alpha}, e'_\beta] \\
    & = -c_0[f'_{\alpha},  e'_{\overline{\Theta}}] = -e_0.
\end{align*}
The last equation follows from the first two.
\end{proof}

\begin{rmk}
For a general compactified Jacobian fibration, the Fourier transform $\mathfrak{F}$ is \emph{not} an involution on the cohomology $H^*(\overline{J}_C, \BC)$.\footnote{It is not an involution even for the Beauville--Mukai system.} Therefore, even if the operators $(e_0, h_0, f_0)$ defined in (\ref{operators}) form an $\mathfrak{sl}_2$-triple, we do not seem to get for free that
\[
e_0 = -\mathfrak{F}^{-1}\circ f_0 \circ \mathfrak{F}.
\]
In particular, it is unclear if the induced decomposition is Fourier-stable. 
\end{rmk}

We have concluded that the decomposition induced by $(e_0, h_0, f_0)$ is Fourier-stable. We normalize its indices to make the fundamental class $[\overline{J}_C] \in H^0(\overline{J}_C, \BC)$ lie in $H_{(0)}^0(\overline{J}_C, \BC)$. Since all three operators $e_0, h_0, f_0$ are rational, this gives rise to a decomposition with rational coefficients: 
\begin{equation*}\label{eqn223}
H^*(\overline{J}_C, \BQ) = \bigoplus_{i} H_{(i)}^*(\overline{J}_C, \BQ), \quad \mathfrak{F}\left( H_{(i)}^*(\overline{J}_C, \BQ) \right) = H_{(2g-i)}^{*}(\overline{J}_C, \BQ).
\end{equation*}

Finally, we recall the following theorem concerning a canonical $\mathfrak{sl}_2$-triple associated with a general Lagrangian fibration $\pi: X\to B$. It summarizes several results from \cite{SY}.

\begin{thm}[Canonical $\mathfrak{sl}_2$-triple \cite{SY}]\label{thm2.16}
Let $\pi: X \to B$ be a Lagrangian fibration from a compact hyper-K\"ahler variety. Let $\Theta$ be a relatively ample class satisfying $q(\Theta)=0$ with respect to the BBF form $q(-)$. Then there is a unique $\mathfrak{sl}_2$-triple
\[
(e_0, h_0, f_0) \subset \mathfrak{g}(X) \subset \mathrm{End}( H^*(X, \BQ) )
\]
with 
\[
e_0= \Theta \cup -, \quad \mathrm{deg}(f_0)=-2, \quad \mathrm{deg}(h_0)=0. 
\]
Moreover, the induced decomposition splits the perverse filtration associated with $\pi: X \to B$, and is multiplicative with respect to the cup-product.
\end{thm}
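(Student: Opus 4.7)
My plan follows the blueprint of Section \ref{sec1.3} applied in the Lagrangian setting, relying throughout on the structure of the LLV algebra.

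\textbf{Step 1 (Triple construction and uniqueness).} By Matsushita's theorem, the pullback class $H := \pi^*\mathcal{O}_B(1)$ is isotropic for the BBF form, and since $\Theta$ is relatively ample we have $(\Theta, H) \neq 0$. Rescaling so that $(\Theta, H) = 1$, $\Theta$ and $H$ span a hyperbolic plane in $H^2(X, \mathbb{C})$. I would construct $f_0 \in \mathfrak{g}(X)$ by embedding this hyperbolic plane into a larger configuration of classes $\eta_1, \eta_2, \eta_3, \eta_4 \in H^2(X, \mathbb{C})$ satisfying condition \eqref{eq:condition2.4}, with $\Theta$ (up to scalar) identified as a $\sigma_{12}$-type combination, and then extract $f_0$ from the $\mathfrak{sl}_2$-triple produced by Proposition \ref{prop2.5} (or its variant from Proposition \ref{prop2.4}(a)). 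The Verbitsky relations of Proposition \ref{prop2.3} guarantee the $\mathfrak{sl}_2$-relations, and the construction forces $f_0$ to have degree $-2$ and $h_0 = [e_0, f_0]$ to have degree $0$. Uniqueness follows from standard $\mathfrak{sl}_2$-representation theory: once $e_0$ is fixed in a finite-dimensional representation together with the prescribed degree constraints on $(h_0, f_0)$, the weight spaces on each irreducible subrepresentation determine the remaining two operators.

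\textbf{Step 2 (Splitting the perverse filtration).} Cup-product with $H$ completes to its own $\mathfrak{sl}_2$-triple $(e_H, h_H, f_H) \subset \mathfrak{g}(X)$ via relative hard Lefschetz, and by the ``perverse $=$ Chern'' principle for Lagrangian fibrations the $h_H$-weight grading realizes the perverse filtration $P_\bullet H^*(X, \mathbb{Q})$ up to a standard shift. Using the Verbitsky relations from Step 1 (in particular the vanishing commutators of Proposition \ref{prop2.4}(c) applied in the conjugate channel), I would compute $[h_0, e_H]$, $[h_0, f_H]$, $[h_0, h_H]$ and verify that $h_0$ preserves each perverse graded piece. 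The joint $(h_0, h_H)$-weight decomposition of $H^*(X, \mathbb{Q})$ then refines $P_\bullet$ into the claimed splitting, and the normalization of the indices is pinned down by placing the fundamental class in the summand of weight zero.

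\textbf{Step 3 (Multiplicativity).} The decomposition is multiplicative precisely when $h_0$ acts as a derivation of the cup-product. This does not follow from the LLV/Lie-theoretic construction alone, since a generic element of $\mathfrak{g}(X)$ fails to be a derivation of $\cup$. My approach would be via the involution $\Omega$ of Lemma \ref{lem2.12}: under conjugation, $\Omega \circ h_0 \circ \Omega^{-1}$ transfers into Taelman's polyvector-field model of the LLV algebra, where by the second equation of Lemma \ref{lem2.12}(b) and the bidegree bookkeeping one identifies it with a linear combination of Hodge-bidegree operators acting as $(aq + bp)\operatorname{id}$ on $H^{p,q}(X)$ for suitable constants; such operators are manifestly derivations of $\cup$ since the Hodge bidegrees add under cup-product. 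Transferring the derivation property back through $\Omega$ requires a compatibility check using that $\Omega$ is Serre duality associated with the symplectic form.

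\textbf{Main obstacle.} Step 3 is the heart of the matter. The Lie-theoretic machinery readily produces $h_0$ and shows that the resulting grading refines the perverse filtration, but it does not directly imply multiplicativity. Bridging this gap demands a genuinely geometric input, namely the identification of $h_0$ with a derivation-type operator coming from Hodge theory, and ultimately leans on the multiplicativity of the perverse filtration itself (established in \cite{MSY}) to bootstrap the derivation property from its leading-order behavior.
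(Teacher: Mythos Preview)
Your Steps 1--2 are broadly on track and parallel the paper's approach, which simply cites \cite{SY} for the existence of the $\mathfrak{sl}_2\times\mathfrak{sl}_2$ and the perverse splitting. The uniqueness sketch in Step~1 is loose, however: saying ``the weight spaces on each irreducible subrepresentation determine $h_0$'' is circular, since the decomposition into irreducibles is exactly what depends on the choice of $h_0$. The paper argues more carefully that the degree-$0$ constraint on $h_0$ forces its weight grading on each fixed $H^i(X,\BQ)$ to split the canonical Jacobson--Morosov filtration of $e_0$ restricted to that piece, and this pins down $h_0$ (and then $f_0$). In Step~2, the phrase ``perverse $=$ Chern'' is a misattribution; the relevant input from \cite{SY} is a perverse--Hodge comparison, not the Chern-filtration phenomenon of \cite{KPS}.

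The genuine gap is Step 3. Your proposed mechanism---conjugate $h_0$ by $\Omega$, identify $\Omega h_0 \Omega^{-1}$ with an operator acting on $H^{p,q}$ by a linear function $(ap+bq)\cdot\mathrm{id}$, observe that such operators are derivations, then ``transfer back''---breaks at the last move. The involution $\Omega$ is \emph{not} a ring homomorphism (it is contraction with a power of $\sigma$, sending $H^{p,q}$ to $H^{2n-p,q}$), so conjugation by $\Omega$ does not carry derivations to derivations. Concretely, if $\Omega h_0 \Omega^{-1}$ acts by $ap+bq$, then $h_0$ acts by $a(2n-p)+bq$, which acquires a constant term; whether the resulting decomposition is multiplicative now hinges on the precise value of $a$, which your sketch never determines. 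The paper's route (via \cite[Proposition~A.2]{SY}) is different in kind: the $h_0$-grading is identified \emph{directly} with a grading by Hodge index, via the perverse--Hodge comparison of \cite{SY} for Lagrangian fibrations, and multiplicativity then drops out of the additivity of Hodge types under cup-product. No $\Omega$-conjugation is involved.

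Finally, the fallback you suggest in your ``Main obstacle''---bootstrapping from the multiplicativity of the perverse \emph{filtration} established in \cite{MSY}---is a dead end. A multiplicative filtration need not admit any multiplicative splitting; indeed Theorem~\ref{thm0.40} of this very paper exhibits compactified Jacobians for which the perverse filtration is multiplicative (by \cite{MSY}) yet no multiplicative splitting exists. So \cite{MSY} cannot supply the missing ingredient here.
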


\begin{proof}
As before, we let $\sH$ be the pullback of a hyperplane class of the base $B$. By the proof of \cite[Theorem 3.1]{SY}, there is a Lie subalgebra
\[
\mathfrak{sl}_2\times \mathfrak{sl}_2 \subset \mathfrak{g}(X),
\]
whose raising operators are given by $e_0$ and $e_1:= \sH\cup -$ respectively, and the decomposition given by the first $\mathfrak{sl}_2$-triple $(e_0,h_0,f_0)$ splits the perverse filtration. Furthermore, this decomposition is multiplicative with respect to the cup-product by \cite[Proposition A.2]{SY}. This proves the existence part.

It remains to prove uniqueness. Assume that $(e_0,h_0,f_0)$ is an $\mathfrak{sl}_2$-triple with the desired degrees. On one hand, the (canonical) Jacobson--Morosov filtration associated with $e_0$ splits the (also canonical) perverse filtration on each $H^i(X, \BQ)$ up to an index shift uniquely determined by the cohomology degree $i$. On the other hand, this splitting can be read off from the decomposition induced by $(e_0, h_0, f_0)$ as the latter respects cohomology degrees. Therefore, the grading operator $h_0$ is uniquely determined, which further determines $f_0$.
\end{proof}

In our setting $\pi: \overline{J}_C\to B$, since we have already proven that the operators (\ref{operators}) form an~$\mathfrak{sl}_2$-triple with the desired degrees, this has to coincide with the canonical $\mathfrak{sl}_2$-triple of Theorem~\ref{thm2.16}. This completes the proofs of Theorems \ref{thm0.2} and \ref{thm0.4}. \qed

\begin{rmk}\label{rmk2.16}
    In the setting of the Beauville--Mukai system, we show that we can construct~$f_0$ of Theorem \ref{thm2.16} from Arinkin's sheaf \cite{A1, A2}. Therefore all three operators of this $\mathfrak{sl}_2$-triple are given by relative correspondences over $B$, induced by algebraic cycles on $\overline{J}_C\times_B \overline{J}_C$. This is not obvious at all from the perspective of \cite{SY}. We refer to Section \ref{Sec3} for further discussions on a possible motivic lifting of this $\mathfrak{sl}_2$-triple.
\end{rmk}

\begin{rmk}
    The multiplicativity of the \emph{perverse filtration}
    \[
    P_kH^*(\overline{J}_C, \BQ) = \bigoplus_{i\leq k} H^*_{(i)}(\overline{J}_C, \BQ)
    \]
    for any compactified Jacobian fibration $\pi: \overline{J}_C\to B$ as in Section \ref{Sec1.1} has been proven in~\cite{MSY} using the convolution product. However, it is not clear if the techniques of \cite{MSY} can upgrade the multiplicativity from the filtration level to the decomposition even in the Lagrangian setting. Here the multiplicativity of the decomposition is for reasons completely different from~\cite{MSY}. It relies heavily on compact hyper-K\"ahler geometry and is deduced essentially from the multiplicativity of the \emph{Hodge decomposition}. 
\end{rmk}

\section{Motivic lifting of the generalized Beauville decomposition}\label{Sec3}

In this section we propose a lifting of the generalized Beauville decomposition and the $\mathfrak{sl}_2$-triple to the level of Chow groups/motives for certain compactified Jacobian fibrations. As discussed in Section \ref{Sec1.4}, evidence suggests that the \emph{Lagrangian} condition should be a natural setup for the generalized Beauville decomposition and the $\mathfrak{sl}_2$-triple to happen, in which case we expect both to happen \emph{motivically}. Explicit computations are carried out to verify the proposal in the case of elliptic $K3$ surfaces. At the end of this section, we will also discuss relations with the Beauville--Voisin conjectures on the Chow ring of compact hyper-K\"ahler varieties.

\subsection{Motivic Beauville decomposition and $\mathfrak{sl}_2$-triple}

A natural framework for the motivic lifting is the theory of relative Chow motives of Corti--Hanamura \cite{CH}. It is built to be compatible with the decomposition theorem, adapts well to non-proper bases (\emph{e.g.}~the Hitchin fibration), and admits natural Chow/homological realizations \cite{GHM}. We refer to \cite[Chapter~8]{MNP} and \cite[Section 2.2]{MSY} for a brief review of the theory.

Recall that over a nonsingular base $B$, the group of degree $k$ relative correspondences between two proper morphisms $X \to B$, $Y \to B$ with $X, Y$ nonsingular is
\[
\mathrm{Corr}^k_B(X, Y) := \mathrm{CH}_{\dim Y - k}(X \times_B Y, \BQ).
\]
Compositions of relative correspondences are defined via refined intersection theory. The category of relative Chow motives $\mathrm{CHM}(B)$ consists of objects triples $(X, \mathfrak{p}, m)$ where $X \to B$ is a proper morphism with $X$ nonsingular, $\mathfrak{p} \in \Corr^0_B(X, X)$ is a projector, and $m \in \mathbb{Z}$. In particular, the motive of $X$ is given by $h(X/B) := (X, [\Delta_{X/B}], 0)$ where $\Delta_{X/B}$ is the relative diagonal. Morphisms between two motives $M = (X, \mathfrak{p}, m)$, $N = (Y, \mathfrak{q}, n)$ are
\[
\Hom_{\mathrm{CHM}(B)}(M, N) := \mathfrak{q} \circ \Corr^{n - m}_B(X, Y) \circ \mathfrak{p}.
\]
Finally, the degree $k$ Chow group of $M = (X, \mathfrak{p}, m)$ is defined by
\[
\mathrm{CH}^k(M, \BQ) := \Hom_{\mathrm{CHM}(B)}(h(B/B), M(k))
\]
where $M(k) := (X, \mathfrak{p}, m + k)$ is the $k$-th Tate twist of $M$.

We propose the following motivic versions of the generalized
Beauville decomposition and the $\mathfrak{sl}_2$-action for compactified Jacobian fibrations under the \emph{Lagrangian} assumption.

\begin{conj} \label{conj3.1}
Let $\pi: \overline{J}_C \to B$ be the compactified Jacobian fibration associated with a flat family of integral projective locally planar curves of arithmetic genus $g$ over a nonsingular variety $B$. We assume that $\overline{J}_C$ is nonsingular, and that the fibration $\pi$ is Lagrangian with respect to a holomorphic symplectic form on $\overline{J}_C$. Then there exists a decomposition
\begin{equation} \label{eq:motdec}
h(\overline{J}_C/B) = \bigoplus_{i = 0}^{2g} h_i(\overline{J}_C/B) \in \mathrm{CHM}(B), \quad h_i(\overline{J}_C/B) = (\overline{J}_C, \mathfrak{p}_i, 0)
\end{equation}
whose homological realization splits the perverse filtration on $R\pi_*\mathbb{Q}_{\overline{J}_C}$, and which is
\begin{enumerate}
\item[(a)] stable under the Fourier transform
\[
\mathfrak{p}_j \circ \FF \circ \mathfrak{p}_i = 0, \quad i + j \neq 2g;
\]
\item[(b)] multiplicative with respect to the cup-product
\[
\mathfrak{p}_k \circ [\Delta_{\overline{J}_C/B}^{\mathrm{sm}}] \circ (\mathfrak{p}_i \times \mathfrak{p}_j) = 0, \quad i + j \neq k.
\]
\end{enumerate}
\end{conj}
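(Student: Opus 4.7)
The plan is to lift the $\mathfrak{sl}_2$-triple $(e_0, h_0, f_0)$ of Theorem \ref{thm2.16} to an $\mathfrak{sl}_2$-action on the relative Chow motive $h(\overline{J}_C/B) \in \mathrm{CHM}(B)$, and then to extract the projectors $\mathfrak{p}_i$ as universal polynomial combinations of these motivic operators. First I would realize all three operators as classes in $\mathrm{Corr}^0_B(\overline{J}_C, \overline{J}_C)$: the raising operator $e_0 = \Theta \cup -$ lifts as the relative diagonal intersected with the pullback of $\Theta$, while the Fourier transforms $\mathfrak{F}, \mathfrak{F}^{-1}$ are already relative correspondences by Arinkin's construction, see equation (\ref{cycles}). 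One then defines $f_0 := -\mathfrak{F}^{-1} \circ e_0 \circ \mathfrak{F}$ and $h_0 := [e_0, f_0]$ by composition in $\mathrm{Corr}_B$. By construction the cohomological realizations of these motivic operators coincide with those appearing in Proposition \ref{prop2.1}.

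The hard part will be to verify the Lie algebra relations
\[
[h_0, e_0] = 2 e_0, \qquad [h_0, f_0] = -2 f_0
\]
as identities in $\mathrm{CH}^*(\overline{J}_C \times_B \overline{J}_C, \BQ)$, not merely modulo homological equivalence. The cohomological proof of Proposition \ref{prop2.1} rests on Taelman's reconstruction of the LLV algebra from Hochschild cohomology together with Beckmann's formulas for the Fourier action on the extended Mukai lattice; neither step has a ready-made motivic counterpart, and the cycle class map on $\mathrm{CH}^*(\overline{J}_C \times_B \overline{J}_C, \BQ)$ is not known to be injective on the subalgebra of interest. Under the Beauville--Voisin philosophy, however, these identities should follow from a \emph{multiplicative} Chow--K\"unneth decomposition of $\overline{J}_C$ relative to $B$: the operators $e_0, h_0, f_0$ already respect such a decomposition cohomologically, so each Lie bracket takes place inside a summand on which the cycle class map is conjectured, and in accessible cases known, to be injective.

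Once the $\mathfrak{sl}_2$-relations hold in $\mathrm{Corr}^0_B$, the projectors $\mathfrak{p}_i$ are produced by a standard recipe: the cohomological action of $h_0$ has finitely many integer weights in $[-2g, 2g]$, so the minimal polynomial of $h_0$ vanishes on $h(\overline{J}_C/B)$ after cohomological realization; granting that this vanishing also lifts to $\mathrm{Corr}^0_B(\overline{J}_C, \overline{J}_C)$, Lagrange interpolation produces mutually orthogonal projectors $\mathfrak{p}_i$ onto the weight spaces. Fourier-stability of the resulting decomposition is then automatic from the definition of $f_0$ together with the motivic identity $\mathfrak{F} \circ h_0 = - h_0 \circ \mathfrak{F}$, which is itself a consequence of the motivic $\mathfrak{sl}_2$-relations above.

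The last point is multiplicativity. For this I would show that the relative small diagonal correspondence $[\Delta_{\overline{J}_C/B}^{\mathrm{sm}}]$ intertwines the $\mathfrak{sl}_2$-action on $h(\overline{J}_C/B)^{\otimes 2}$ with that on $h(\overline{J}_C/B)$. Cohomologically, for $e_0$ this is the Leibniz rule $e_0(xy) = e_0(x) y + x e_0(y)$, immediate since $\Theta$ is a divisor class; the corresponding rule for $f_0$, and hence for $h_0$, must then be lifted to Chow. In the Lagrangian hyper-K\"ahler setting this motivic Leibniz rule is a direct consequence of the multiplicative Chow--K\"unneth property predicted by Beauville--Voisin. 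I would test the full package first for elliptic $K3$ surfaces, where the multiplicative Chow--K\"unneth decomposition and the small diagonal relation can be written down explicitly, and then attempt to propagate the result to general Beauville--Mukai systems by degenerating the irreducible class $L$ to an elliptic $K3$.
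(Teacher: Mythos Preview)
The statement you are addressing is Conjecture~\ref{conj3.1}, which the paper does \emph{not} prove; it is posed as an open problem and verified only for elliptic $K3$ surfaces in Section~\ref{sec3.2}. Your proposal is therefore not competing against a proof but against the paper's own heuristic strategy, and in broad outline the two agree: the paper likewise advocates lifting the triple $(e_0, h_0, f_0)$ to relative correspondences (this is Conjecture~\ref{conj3.2}), extracting the projectors $\mathfrak{p}_i$ from the eigenspace decomposition of $h_0$ as in~\eqref{eq:eigen}, and detecting Fourier-stability and multiplicativity via the identities $\FF^{-1}\circ h_0 \circ \FF = -h_0$ and~\eqref{eq:motmult}.

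That said, your outline contains a genuine circularity. You propose to establish the motivic $\mathfrak{sl}_2$-relations by appealing to ``a multiplicative Chow--K\"unneth decomposition of $\overline{J}_C$ relative to $B$,'' but that is precisely what Conjecture~\ref{conj3.1} asserts; similarly, invoking injectivity of the cycle class map on the relevant summands presupposes the decomposition you are trying to construct. The paper is candid that the $\mathfrak{sl}_2$-relations in $\mathrm{CH}_*(\overline{J}_C\times_B \overline{J}_C,\BQ)$ are the crux and offers no general mechanism beyond direct computation in the elliptic $K3$ case. A further subtlety you elide: the paper stresses that even the expectation $f_0 \in \Corr^{-1}_B(\overline{J}_C,\overline{J}_C)$ (rather than a correspondence of mixed degree) is nontrivial and part of the conjectural package, whereas you place all three operators in $\Corr^0_B$. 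Finally, your closing idea of propagating from elliptic $K3$ surfaces to higher-dimensional Beauville--Mukai systems by degeneration is not pursued in the paper and would require a specialization argument for relative Chow motives over varying bases that is not currently available.
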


Here $\Delta_{\overline{J}_C/B}^{\mathrm{sm}}$ is the small relative diagonal of $\overline{J}_C$ in $\overline{J}_C \times_B \overline{J}_C \times_B \overline{J}_C$ and we refer to \cite[Section 2.2.3]{MSY} for the multiplicative structure of $\mathrm{CHM}(B)$.

\begin{conj} \label{conj3.2}
Let $\pi: \overline{J}_C \to B$ be as in Conjecture \ref{conj3.1}. Then there is an $\mathfrak{sl}_2$-triple
\[
e_0 \in \Corr^1_B(\overline{J}_C, \overline{J}_C), \quad f_0 \in \Corr^{-1}_B(\overline{J}_C, \overline{J}_C), \quad h_0 \in \Corr^0_B(\overline{J}_C, \overline{J}_C)
\]
which induces the Fourier-stable multiplicative decomposition of motives \eqref{eq:motdec}
\begin{equation} \label{eq:eigen}
h_0 \circ \mathfrak{p_i} = (i - g) \mathfrak{p_i}, \quad 0 \leq i \leq 2g.
\end{equation}
\end{conj}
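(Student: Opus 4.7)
The plan is to take every ingredient of the cohomological $\mathfrak{sl}_2$-triple $(e_0,h_0,f_0)$ built in Section \ref{Sec2} and reinterpret it as a genuine relative correspondence on $\overline{J}_C\times_B \overline{J}_C$, then verify that the $\mathfrak{sl}_2$-relations survive at the Chow level. The raising operator is already algebraic: assuming a generalized theta divisor $\Theta$ exists on $\overline{J}_C$ (which should follow from a BBF-type argument as in the proof of Theorem \ref{thm0.2}), one sets
\[
e_0 := (\Delta_{\overline{J}_C/B})_* [\Theta] \in \Corr^1_B(\overline{J}_C, \overline{J}_C).
\]
The Fourier transforms $\mathfrak{F}, \mathfrak{F}^{-1}$ are \emph{a priori} relative correspondences via the formulas (\ref{cycles}) using Arinkin's Poincar\'e sheaf and the tau-class of \cite{Ful}, so one defines
\[
f_0 := -\mathfrak{F}^{-1}\circ e_0 \circ \mathfrak{F}, \qquad h_0 := [e_0,f_0]
\]
as compositions and commutators of relative correspondences, automatically realizing the cohomological $\mathfrak{sl}_2$-triple of Proposition \ref{prop2.1}.

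The core step is then to upgrade the Lie-algebra identities $[h_0,e_0] = 2e_0$ and $[h_0,f_0] = -2f_0$ from cohomology to $\Corr^*_B(\overline{J}_C,\overline{J}_C)$: one must show that the relative cycles expressing each difference, known to be homologically trivial by Proposition \ref{prop2.1}, are in fact rationally equivalent to zero on $\overline{J}_C\times_B\overline{J}_C$. This is precisely the kind of statement predicted by the Beauville--Voisin philosophy for compact hyper-K\"ahler varieties, now relativized over $B$: natural cohomological identities among tautological classes should lift to rational equivalences. Once established, the projectors $\mathfrak{p}_i$ realizing the decomposition \eqref{eq:motdec} are produced by the standard Jacobson--Morozov machinery applied to the $\mathfrak{sl}_2$-action on $h(\overline{J}_C/B)$, with $\mathfrak{p}_i$ cutting out the $(i-g)$-weight space of $h_0$, which forces \eqref{eq:eigen}. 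Fourier-stability of the resulting decomposition is then formal from the defining identity of $f_0$, while multiplicativity should follow from a motivic upgrade of the multiplicativity of the perverse filtration proven in \cite{MSY}, using that $e_0$ acts by cup-product with an honest divisor.

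The main obstacle is the Chow-level verification of the $\mathfrak{sl}_2$-relations: while the cohomological calculation of Section \ref{sec2.4} reduces everything to manipulations inside the LLV algebra, the motivic analogues of operators such as $K_{ij}$ and of the Taelman--Fourier isometry $\mathfrak{F}^{\widetilde H}$ are not transparently algebraic. For elliptic $K3$ surfaces one can bypass this by exploiting the explicit geometry, realizing $\overline{J}_C$ up to birational modification as a Hilbert scheme of points and invoking the existing multiplicative Chow--K\"unneth decompositions in that setting, together with a direct computation of Arinkin's kernel relative to the elliptic fibration; each Lie-bracket identity can then be checked by an explicit cycle-level calculation. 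In the general Lagrangian setting, genuinely new input of Beauville--Voisin type for Lagrangian fibrations appears to be required.
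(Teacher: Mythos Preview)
The statement you are attempting to prove is Conjecture~\ref{conj3.2}, which the paper explicitly leaves open in general. There is no ``paper's own proof'' to compare against: the paper only \emph{proposes} the construction you describe (defining $e_0$ via a Chow-level generalized theta divisor, then $f_0 := -\FF^{-1}\circ e_0\circ \FF$ and $h_0 := [e_0,f_0]$), and verifies it in full only for elliptic $K3$ surfaces in Section~\ref{sec3.2}. Your outline of the general strategy is faithful to the paper's own expectations, and you correctly identify the core obstacle: the $\mathfrak{sl}_2$-relations must be checked in $\Corr^*_B(\overline{J}_C,\overline{J}_C)$, where the LLV/Taelman machinery of Section~\ref{Sec2} has no obvious lift. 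One point deserves correction: you suggest multiplicativity ``should follow from a motivic upgrade of the multiplicativity of the perverse filtration proven in \cite{MSY}'', but the paper explicitly warns (in the remark following Theorem~\ref{thm2.16}) that it is unclear whether the convolution techniques of \cite{MSY} can upgrade multiplicativity from the filtration to the decomposition. The paper instead reformulates multiplicativity as the single cycle identity~\eqref{eq:motmult} involving $h_0$ and the small relative diagonal, which is what gets checked directly in the elliptic $K3$ case.

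For the elliptic $K3$ verification, your proposed route---realizing $\overline{J}_C$ birationally as a Hilbert scheme and importing known Chow--K\"unneth decompositions---is not what the paper does and is considerably less direct. The paper works with $\pi:S\to\BP^1$ itself (here $g=1$, so $\overline{J}_C\cong S$), writes down the projectors $\mathfrak{p}_0,\mathfrak{p}_1,\mathfrak{p}_2$ and the operators $e_0,f_0,h_0$ explicitly in terms of the section class $\mathsf{s}$, the fiber class $\mathsf{f}$, and $\Theta=\mathsf{s}+\mathsf{f}$, and then checks each required identity (the $\mathfrak{sl}_2$-relations, the eigenvalue relations, Fourier-stability, and multiplicativity via a relative Beauville--Voisin-type relation on $S\times_{\BP^1}S\times_{\BP^1}S$) by hand. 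No birational modification or Hilbert scheme enters.
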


Note that the identities \eqref{eq:eigen} together with $[\Delta_{\overline{J}_C/B}] = \sum_{i = 0}^{2g} \mathfrak{p}_i$ already imply that the $\mathfrak{p}_i$ are orthogonal projectors. Indeed, we have
\[
\mathfrak{p}_i =  [\Delta_{\overline{J}_C/B}] \circ \mathfrak{p}_i = \sum_{j = 0}^{2g} \mathfrak{p}_j \circ \mathfrak{p}_i.
\]
Comparing eigenvalues with respect to $h_0$ on both sides, we see that $\mathfrak{p}_j \circ \mathfrak{p}_i = \delta_{ij}$. We also deduce
\[
\mathfrak{p_j} \circ h_0 = (j - g) \mathfrak{p_j}, \quad 0 \leq j \leq 2g
\]
since $h_0 = \sum_{i = 0}^{2g} h_0 \circ \mathfrak{p}_i = \sum_{i = 0}^{2g} (i - g) \mathfrak{p}_i$. Then, Fourier-stability can be detected by verifying the identity
\[
\FF^{-1} \circ h_0 \circ \FF = -h_0,
\]
and multiplicativity by
\begin{multline} \label{eq:motmult}
[\Delta_{\overline{J}_C/B}^{\mathrm{sm}}] \circ \left((h_0 + g \cdot [\Delta_{\overline{J}_C/B}]) \times [\Delta_{\overline{J}_C/B}] + [\Delta_{\overline{J}_C/B}] \times (h_0 + g \cdot [\Delta_{\overline{J}_C/B}])\right) \\
= (h_0 + g \cdot [\Delta_{\overline{J}_C/B}]) \circ [\Delta_{\overline{J}_C/B}^{\mathrm{sm}}]. 
\end{multline}
Similar ideas involving $h_0$ have already been employed in \cite{NOY}.

We make two more comments on the conjectures above. First, when $B$ is proper there is a natural pushforward functor
\begin{equation} \label{eq:relabs}
\mathrm{CHM}(B) \to \mathrm{CHM}(\mathrm{pt})
\end{equation}
from the category of relative Chow motives to the category of \emph{absolute} Chow motives. It is defined by pushing forward
\[
\Corr^0_B(X, X) = \mathrm{CH}_{\dim X}(X \times_B X, \BQ) \to \mathrm{CH}_{\dim X}(X \times X, \BQ) = \Corr^0(X, X)
\]
via the closed embedding $X \times_B X \hookrightarrow X \times X$. In this case we can formulate weaker versions of Conjectures \ref{conj3.1} and \ref{conj3.2} concerning the absolute Chow motive $h(\overline{J}_C) \in \mathrm{CHM}(\mathrm{pt})$, \emph{i.e.}, the image of~$h(\overline{J}_C/B) \in \mathrm{CHM}(B)$ under \eqref{eq:relabs}. These weaker versions in turn specialize to the cohomological statements of Theorems \ref{thm0.2} and \ref{thm0.4} for the Beauville--Mukai system. We also mention here the recent work of Ancona--Cavicchi--Laterveer--Sacc\`a \cite{ACLS} on decomposing the relative/absolute \emph{homological} motive of a compact hyper-K\"ahler variety carrying a Lagrangian fibration. In particular, using their results one can give another proof that the generalized Beauville decomposition of Theorem \ref{thm0.4} (which coincides with the decomposition in \cite{SY} by Theorem \ref{thm2.16}) is induced by relative correspondences over $B$; see \cite[Proposition 5.3 and Corollary 6.5]{ACLS}.\footnote{The arguments of \cite{ACLS} work equally for twisted Beauville--Mukai systems associated with an irreducible curve class and for (twisted) LSV fibrations \cite{LSV}.}

\begin{rmk}
There is a subtle difference between an $\mathfrak{sl}_2$-action on the relative Chow motive~$h(\overline{J}_C/B)$ and an $\mathfrak{sl}_2$-action on the absolute Chow motive $h(\overline{J}_C)$ induced by relative correspondences over $B$. The former is significantly stronger as it requires the $\mathfrak{sl}_2$-relations to hold in the Chow groups $\mathrm{CH}_*(\overline{J}_C \times_B \overline{J}_C, \BQ)$. Meanwhile, we also lose the realization
\[
\mathrm{CHM}(B) \to D^b_c(B)
\]
when passing from $\mathrm{CHM}(B)$ to $\mathrm{CHM}(\mathrm{pt})$.
\end{rmk}

Second, we believe that the motivic $\mathfrak{sl}_2$-triple can be constructed following the strategy of Section~\ref{sec1.3}. To begin with, the generalized theta divisor should be upgraded to the Chow level, \emph{i.e.}, $\Theta \in \mathrm{CH}^1(h_2(\overline{J}_C/B), \BQ)$ such that
\begin{equation} \label{eq:genchow}
\Theta^{g + 1} = 0 \in \mathrm{CH}^{g + 1}(\overline{J}_C, \BQ).
\end{equation}
When the base $B$ is proper the existence of $\Theta$ is well-studied in the context of compact hyper-K\"ahler geometry and is part of the Beauville--Voisin philosophy (see Section \ref{Sec3.3} below). Notably Rie{\ss} \cite{R} proved that assuming the hyper-K\"ahler SYZ conjecture, any $\BQ$-divisor $D$ with $q(D) = 0$ on a compact hyper-K\"ahler variety $X$ of dimension $2n$ satisfies
\[
D^{n + 1} = 0 \in \mathrm{CH}^{n + 1}(X, \BQ).
\]
Next, with the divisor $\Theta$ in \eqref{eq:genchow} we define
\begin{equation} \label{eq:defe0}
e_0 := \Delta_{\overline{J}_C/B*}\Theta \in \Corr^1_B(\overline{J}_C, \overline{J}_C)
\end{equation}
where $\Delta_{\overline{J}_C/B} : \overline{J}_C \hookrightarrow \overline{J}_C \times_B \overline{J}_C$ is the relative diagonal map, and
\[
f_0 := -\FF^{-1} \circ e_0 \circ \FF, \quad h_0 := [e_0, f_0]
\]
following Section \ref{sec1.3}. We expect $f_0$ to lie in $\Corr^{-1}_B(\overline{J}_C, \overline{J}_C)$ (hence $h_0 \in \Corr^0_B(\overline{J}_C, \overline{J}_C)$), and that $(e_0, h_0, f_0)$ form the desired $\mathfrak{sl}_2$-triple of Conjecture \ref{conj3.2}.

We now illustrate how this proposal works for elliptic $K3$ surfaces.

\subsection{Elliptic $K3$ surfaces} \label{sec3.2}

In this section, let $\pi: S \to \BP^1$ be an elliptic fibration from a projective $K3$ surface with only integral fibers and a section $s: \BP^1 \to S$, so that $S$ is isomorphic to a genus $1$ compactified Jacobian family $\overline{J}_C$. To set up the notation, let
\[
\mathsf{s} := [s(\BP^1)] \in \mathrm{CH}^1(S, \BQ), \quad \mathsf{f} := [\pi^{-1}(\mathrm{pt})] \in \mathrm{CH}^1(S, \BQ)
\]
be the classes of the section and fibers. A generalized theta divisor is then given by
\[ \Theta := \mathsf{s} + \mathsf{f}, \quad \Theta^2 = 0 \in \mathrm{CH}^2(S, \BQ).\]
Also let $c \in \mathrm{CH}^2(S, \BQ)$ be the Beauville--Voisin distinguished class \cite{BV} which is supported on any rational curve of $S$.

We define self-correspondences in $\Corr^{0}_{\BP^1}(S, S)$:
\[
\mathfrak{p}_0 := p_1^*\Theta, \quad \mathfrak{p}_2 := p_2^*\Theta, \quad \mathfrak{p}_1 := [\Delta_{S/\mathbb{P}^1}] - \mathfrak{p}_0 - \mathfrak{p}_2,
\]
where $p_1, p_2: S \times_{\BP^1} S \to S$ are the two projections. It is straightforward to check that the $\mathfrak{p}_i$ are orthogonal projectors, and the resulting motivic decomposition
\begin{equation} \label{eq:motdeck3}
h(S/\BP^1) = \bigoplus_{i = 0}^2 h_i(S/\BP^1) \in \mathrm{CHM}(\BP^1), \quad h_i(S/\BP^1) = (S, \mathfrak{p}_i, 0)
\end{equation}
specializes to a splitting of the perverse filtration on $R\pi_*\BQ_S$.

Concerning the $\mathfrak{sl}_2$-triple we set as in \eqref{eq:defe0}
\[
e_0 := \Delta_{S/\BP^1*} \Theta \in \Corr^1_{\BP^1}(S, S),
\]
and for dimension reasons
\[
f_0 := [S \times_{\BP^1} S] \in \Corr^{-1}_{\BP^1}(S, S).
\]
Then we have
\[
h_0 := [e_0, f_0] = p_2^*\Theta - p_1^*\Theta \in \Corr^0_{\BP^1}(S, S).
\]

We give a list of desired properties of the motivic decomposition and the $\mathfrak{sl}_2$-triple in the following proposition, verifying Conjectures \ref{conj3.1} and \ref{conj3.2} for $\pi: S \to \BP^1$.

\begin{prop}
We have
\begin{enumerate}
\item[(a)] $[h_0, e_0] = 2e_0$, $[h_0, f_0] = -2f_0$;
\item[(b)] $h_0 \circ \mathfrak{p_i} = (i - 1)\mathfrak{p_i}$ for $i = 0, 1, 2$;
\item[(c)] $\FF^{-1} \circ e_0 \circ \FF = -f_0$, $\FF^{-1} \circ f_0 \circ \FF = -e_0$, $\FF^{-1} \circ h_0 \circ \FF = -h_0$;
\item[(d)] $[\Delta^{\mathrm{sm}}_{S/\mathbb{P}^1}] \circ (h_0 \times [\Delta_{S/\mathbb{P}^1}] + [\Delta_{S/\mathbb{P}^1}] \times h_0 + [\Delta_{S/\mathbb{P}^1}] \times [\Delta_{S/\mathbb{P}^1}]) = h_0 \circ [\Delta^{\mathrm{sm}}_{S/\mathbb{P}^1}]$.
\end{enumerate}
\end{prop}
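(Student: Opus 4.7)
The plan is to verify each identity by an explicit cycle computation in the Chow groups of the relative products $S\times_{\BP^1}S$ and $S\times_{\BP^1}S\times_{\BP^1}S$, using (i) base change along the two relative diagonal embeddings $\iota_{12},\iota_{23}:S\times_{\BP^1}S\hookrightarrow S\times_{\BP^1}S\times_{\BP^1}S$ defined by $(y,z)\mapsto(y,y,z)$ and $(y,z)\mapsto(y,z,z)$, (ii) the projection formula, and (iii) the Beauville--Voisin vanishing $\Theta^2=0\in\mathrm{CH}^2(S,\BQ)$, which follows from $\Theta\cdot\Theta=0$ via the BV relation $D_1\cdot D_2=(D_1\cdot D_2)c$ on a $K3$ surface.

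For part (b), one-line base-change computations give $e_0\circ f_0=p_2^*\Theta=\mathfrak{p}_2$ and $f_0\circ e_0=p_1^*\Theta=\mathfrak{p}_0$, so $h_0=\mathfrak{p}_2-\mathfrak{p}_0$. Orthogonality of the projectors $\mathfrak{p}_i\circ\mathfrak{p}_j=\delta_{ij}\mathfrak{p}_i$ follows from the relative degree-one identity $p_{13*}\mathrm{pr}_2^*\Theta=[S\times_{\BP^1}S]$ (valid because $\Theta|_{S_t}$ has degree one on every fiber) together with $\Theta^2=0$ (for $\mathfrak{p}_0\circ\mathfrak{p}_2=\mathfrak{p}_2\circ\mathfrak{p}_0=0$); part (b) is then immediate, as $h_0\circ\mathfrak{p}_i=\mathfrak{p}_2\circ\mathfrak{p}_i-\mathfrak{p}_0\circ\mathfrak{p}_i=(i-1)\mathfrak{p}_i$. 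Applying the same unfolding to $h_0\circ e_0$ and $e_0\circ h_0$ and using $\Theta^2=0$ to kill the off-diagonal contributions yields $\mathfrak{p}_2\circ e_0=p_1^*\Theta\cdot p_2^*\Theta=e_0\circ\mathfrak{p}_0$ and $\mathfrak{p}_0\circ e_0=0=e_0\circ\mathfrak{p}_2$, so that $[h_0,e_0]=2\,p_1^*\Theta\cdot p_2^*\Theta$. Matching this to $2e_0=2\Delta_{S/\BP^1*}\Theta$ reduces part (a) to the key cycle-level equality
\[
p_1^*\Theta\cdot p_2^*\Theta=\Delta_{S/\BP^1*}\Theta\quad\text{in }\mathrm{CH}^2(S\times_{\BP^1}S,\BQ), \qquad (\star)
\]
which I would verify by expanding $\Theta=\mathsf{s}+\mathsf{f}$: transversality along the diagonal section $\sigma(\BP^1)=(\Delta\circ s)(\BP^1)$ gives $p_1^*\mathsf{s}\cdot p_2^*\mathsf{s}=\Delta_{*}\mathsf{s}$, the product $p_1^*\mathsf{f}\cdot p_2^*\mathsf{f}$ vanishes since $p_i^*\mathsf{f}=\mathrm{pr}^*[\mathrm{pt}]$ squares to zero, and the remaining identity $p_1^*\mathsf{s}\cdot p_2^*\mathsf{f}+p_1^*\mathsf{f}\cdot p_2^*\mathsf{s}=\Delta_{*}\mathsf{f}$ rewrites, via BV's $\mathsf{s}\cdot\mathsf{f}=c$, as the relative Beauville--Voisin statement $\Delta_{*}\mathsf{f}=p_1^*c+p_2^*c$. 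The symmetric computation yields $[h_0,f_0]=-2f_0$ more directly since $f_0$ is the fundamental class and $\Theta^2=0$ collapses the intermediate terms at once.

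For part (c), the Arinkin Poincar\'e sheaf $\CP$ on the self-dual Lagrangian fibration $\pi$ has Chern class
\[
c_1(\CP)=[\Delta_{S/\BP^1}]-p_1^*\mathsf{s}-p_2^*\mathsf{s}+\kappa\,p_1^*\mathsf{f}
\]
for an explicit constant $\kappa$ determined by the normalization $\CP|_{\text{zero section}}=\CO$. Substituting into \eqref{cycles} and carrying out the Chow-level conjugations, both $\FF^{-1}\circ e_0\circ\FF$ and $\FF^{-1}\circ f_0\circ\FF$ collapse via $(\star)$ and $\Theta^2=0$ to $-f_0$ and $-e_0$ respectively, and the identity for $h_0$ then follows as a commutator. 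For part (d), unfolding the composition with the small diagonal on the relative quadruple product $S\times_{\BP^1}^{4}$ reduces both sides to a polynomial expression in the classes $p_i^*\Theta$ which once again collapses via $(\star)$ and $\Theta^2=0$.

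The main obstacle is the identity $(\star)$, specifically its last component $\Delta_{*}\mathsf{f}=p_1^*c+p_2^*c$, equivalent to the vanishing $\ell\cdot p_1^*\mathsf{f}=0\in\mathrm{CH}^2(S\times_{\BP^1}S,\BQ)$ of the relative Poincar\'e class $\ell=[\Delta_{S/\BP^1}]-p_1^*\mathsf{s}-p_2^*\mathsf{s}$ times the fiber class. Although both sides push to $\mathsf{f}$ under $p_{1*}$ and $p_{2*}$, the cycle equality itself is a relative Beauville--Voisin-type statement requiring either a careful analysis of the Chow group of $S\times_{\BP^1}S$ across the singular fibers of $\pi$ or a direct construction of a rational equivalence exploiting the section $s:\BP^1\to S$ to move the fiberwise Poincar\'e representative. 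Once $(\star)$ is in hand, the remaining identities reduce to the formal cycle manipulations outlined above.
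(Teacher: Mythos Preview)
Your treatment of (a) and (b) is essentially the paper's, and the identity $(\star)$ you isolate is indeed the key. But you have made $(\star)$ harder than it is. The paper proves $\Delta_{S/\BP^1*}\mathsf{f}=p_1^*\mathsf{s}\,p_2^*\mathsf{f}+p_1^*\mathsf{f}\,p_2^*\mathsf{s}$ in one line: represent $\mathsf{f}$ by a \emph{singular rational} fiber $F_0$ over a point $t_0\in\BP^1$, and note that $\Delta_{S/\BP^1*}[F_0]$ lives in $F_0\times F_0\cong\BP^1\times\BP^1$, where $[\Delta_{\BP^1}]=[\mathrm{pt}\times\BP^1]+[\BP^1\times\mathrm{pt}]$. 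No relative Beauville--Voisin analysis is needed here; the Chow group of the singular fiber is trivial enough that the identity is immediate.

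For (c) your plan to compute $c_1(\CP)$ and grind through \eqref{cycles} could work, but the paper takes a cleaner route: it invokes the fact that derived equivalences of $K3$ surfaces preserve the Beauville--Voisin ring to write down directly how $\FF,\FF^{-1}$ act on the four classes $[S],\Theta,\mathsf{f},c$, and then the conjugation $\FF^{-1}\circ(p_1^*\Theta\,p_2^*\Theta)\circ\FF=p_1^*\FF(\Theta)\,p_2^*\FF^{-1}(\Theta)$ collapses immediately.

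The genuine gap is (d). Your claim that it ``reduces to a polynomial expression in the classes $p_i^*\Theta$ which collapses via $(\star)$ and $\Theta^2=0$'' is wrong: after unfolding, the identity still contains the class of the relative small diagonal $[\Delta^{\mathrm{sm}}_{S/\BP^1}]$, which is \emph{not} a polynomial in pulled-back divisor classes. What (d) actually reduces to is a relative version of the Beauville--Voisin small-diagonal relation,
\[
[\Delta^{\mathrm{sm}}_{S/\BP^1}]-\bigl(q_1^*\mathsf{s}\,q_{23}^*[\Delta_{S/\BP^1}]+\text{perms}\bigr)+\bigl(q_2^*\mathsf{s}\,q_3^*\mathsf{s}+\text{perms}\bigr)=0\in\mathrm{CH}_2(S\times_{\BP^1}S\times_{\BP^1}S,\BQ),
\]
and this requires real work: the paper restricts to the smooth locus (where it is known for abelian schemes), uses localization to see the difference is supported on triple products of the rational singular fibers, observes that the $\mathfrak{S}_3$-invariant part of that support is one-dimensional, and finally pushes forward to the absolute $S\times S\times S$ and invokes the genuine Beauville--Voisin relation $[\Delta^{\mathrm{sm}}_S]=\sum q_i'^*c\,q_{jk}'^*[\Delta_S]-\sum q_i'^*c\,q_j'^*c$ to pin the coefficient to zero. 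Neither $(\star)$ nor $\Theta^2=0$ substitutes for this step.
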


Here (b) shows that the motivic decomposition \eqref{eq:motdeck3} is induced by the $\mathfrak{sl}_2$-triple $(e_0, h_0, f_0)$ from (a), (c) shows that it is Fourier-stable, and (d) which is a reformulation of \eqref{eq:motmult} verifies the multiplicativity of the motivic decomposition.

\begin{proof}
For (a) we observe that
\begin{equation} \label{eq:thetadiag}
e_0 = \Delta_{S/\BP^1*}\Theta = p_1^*\Theta \,p_2^*\Theta \in \mathrm{CH}_1(S \times_{\BP^1} S, \BQ).
\end{equation}
In fact we have $\Delta_{S/\BP^1*}\mathsf{s} = p_1^*\mathsf{s} \, p_2^*\mathsf{s}$ by definition. For $\Delta_{S/\BP^1*}\mathsf{f}$ we may take $\mathsf{f}$ to be the class of a (singular) rational fiber of $\pi$. Since $[\Delta_{\BP^1}] = [\mathrm{pt} \times \BP^1] + [\BP^1 \times \mathrm{pt}]$, we find
\begin{equation} \label{eq:s+f}
\Delta_{S/\BP^1*}\mathsf{f} = p_1^*\mathsf{s} \, p_2^*\mathsf{f} + p_1^*\mathsf{f} \, p_2^*\mathsf{s} \in \mathrm{CH}_1(S \times_{\BP^1} S, \BQ).
\end{equation}
Altogether we have
\[
\Delta_{S/\BP^1*}\Theta = \Delta_{S/\BP^1*}(\mathsf{s} + \mathsf{f}) = p_1^*\mathsf{s} \, p_2^*\mathsf{s} + p_1^*\mathsf{s} \, p_2^*\mathsf{f} + p_1^*\mathsf{f} \, p_2^*\mathsf{s} = p_1^*(\mathsf{s} + \mathsf{f}) p_2^*(\mathsf{s} + \mathsf{f}) = p_1^*\Theta \,p_2^*\Theta
\]
since $p_1^*\mathsf{f} \, p_2^*\mathsf{f} = 0$.

We compute
\begin{align*}
[h_0, e_0] & = h_0 \circ e_0 - e_0 \circ h_0 \\
& = (p_2^*\Theta - p_1^*\Theta) \circ (p_1^*\Theta \,p_2^*\Theta) - (p_1^*\Theta \,p_2^*\Theta) \circ (p_2^*\Theta - p_1^*\Theta) \\
& = p_1^*\Theta \,p_2^*\Theta - (- p_1^*\Theta \,p_2^*\Theta) = 2e_0,
\end{align*}
where the second to last equality uses $\Theta^2 = 0$. The other identity $[h_0, f_0] = -2f_0$ and part~(b) are both straightforward.

For (c) we will need the following version of Proposition~\ref{prop2.9}:
\begin{gather*}
\mathfrak{F}([S]) = -\Theta + c, \quad 
\mathfrak{F}(c) = \mathsf{f}, \quad \mathfrak{F}(\Theta) = [S] - \mathsf{f}, \quad \mathfrak{F}(\mathsf{f}) = -c, \\
\mathfrak{F}^{-1}([S]) = \Theta + c, \quad 
\mathfrak{F}^{-1}(c) = -\mathsf{f}, \quad \mathfrak{F}^{-1}(\Theta) = -[S] - \mathsf{f}, \quad \mathfrak{F}^{-1}(\mathsf{f}) = c,
\end{gather*}
which is a consequence of the more general result \cite{H, SYZ} that derived equivalences preserve the Beauville--Voisin ring. We also recall from \cite{A2} that $\FF$ is symmetric with respect to the two factors of $S \times_{\BP^1} S$.

We compute
\begin{align*}
\FF^{-1} \circ e_0 \circ \FF & = \FF^{-1} \circ (p_1^*\Theta \,p_2^*\Theta) \circ \FF \\
& = \FF^{-1} \circ (p_1^*\FF(\Theta) \,p_2^*\Theta) \\
& = p_1^*\FF(\Theta) \, p_2^*\FF^{-1}(\Theta) \\
& = p_1^*([S] - \mathsf{f}) \, p_2^*(-[S] - \mathsf{f}) \\
& = -[S \times_{\BP^1} S] = -f_0,
\end{align*}
where the second to last equality uses that $\mathsf{f}$ is pulled back from the base $\BP^1$. Similarly, we~have
\begin{align*}
\FF^{-1} \circ f_0 \circ \FF & = \FF^{-1} \circ [S \times_{\BP^1} S] \circ \FF \\
& = \FF^{-1} \circ p_1^*\FF([S]) \\
& = p_1^*\FF([S]) \, p_2^*\FF^{-1}([S]) \\
& = p_1^*(-\Theta + c) \, p_2^*(\Theta + c) \\
& = -p_1^*\Theta \, p_2^*\Theta = -e_0,
\end{align*}
where the second to last equality is because $-p_1^*\Theta\,p_2^*c + p_1^*c\,p_2^*\Theta$ is a $0$-cycle of degree $0$ supported on a chain of rational curves and hence vanishes. The third identity of (c) follows from the first two.

We now prove (d). By plugging in the definition of $h_0$, we may rewrite (d) as
\begin{multline*}
q_{23}^*\Delta_{S/\BP^1*}\Theta - q_1^*\Theta \, q_{23}^*[\Delta_{S/ \BP^1}] + q_{13}^*\Delta_{S/\BP^1*}\Theta - q_2^*\Theta \, q_{13}^*[\Delta_{S/ \BP^1}] + [\Delta_{S/\BP^1}^{\mathrm{sm}}] \\
{}= q_3^*\Theta \, q_{12}^*[\Delta_{S/ \BP^1}] - q_{12}^*\Delta_{S/\BP^1*}\Theta \in \mathrm{CH}_2(S \times_{\BP^1} S \times_{\BP^1} S, \BQ),
\end{multline*}
where the $q_i: S \times_{\BP^1} S \times_{\BP^1} S \to S$, $q_{ij}: S \times_{\BP^1} S \times_{\BP^1} S \to S \times_{\BP^1} S$ are the natural projections. In a more symmetric form we find
\[
[\Delta_{S/\BP^1}^{\mathrm{sm}}] - \left(q_1^*\Theta \, q_{23}^*[\Delta_{S/ \BP^1}] + \textrm{permutations}\right) + \left(q_{23}^*\Delta_{S/\BP^1*}\Theta + \textrm{permutations}\right) = 0.
\]
Applying \eqref{eq:thetadiag} and \eqref{eq:s+f} we are further reduced to
\begin{equation} \label{eq:relbv}
[\Delta_{S/\BP^1}^{\mathrm{sm}}] - \left(q_1^*\mathsf{s} \, q_{23}^*[\Delta_{S/ \BP^1}] + \textrm{permutations}\right) + (q_2^*\mathsf{s} \, q_3^*\mathsf{s} + \textrm{permutations}) = 0.
\end{equation}

The proof of \eqref{eq:relbv} follows closely the original argument of Beauville--Voisin in \cite{BV}. First, the left-hand side of \eqref{eq:relbv} vanishes when restricted to the smooth locus of $\pi: S \to \BP^1$ (see \emph{e.g.}~\cite{MY} for a proof of this fact). Hence by the localization sequence, it is supported on the triple products of the (singular) rational fibers. On the other hand, all $\mathfrak{S}_3$-invariant $2$-cycles supported on such triple products are proportional to the effective cycle
\begin{equation} \label{eq:eff}
q_1^*c + q_2^*c + q_3^*c \in \mathrm{CH}_2(S \times_{\BP^1} S \times_{\BP^1} S, \BQ).
\end{equation}
We deduce that the left-hand side of \eqref{eq:relbv} is a multiple of \eqref{eq:eff}.

Next, we push the left-hand side of \eqref{eq:relbv} all the way to the absolute triple product~$S \times S \times S$, which sends $[\Delta_{S/\BP^1}^{\mathrm{sm}}]$ to the small diagonal $[\Delta^{\mathrm{sm}}_S] \in \mathrm{CH}_2(S \times S \times S, \BQ)$.
Using~$\mathsf{s}\mathsf{f} = c$ and the identities
\begin{gather*}
[S \times_{\BP^1} S] = p_1'^*\mathsf{f} + p_2'^*\mathsf{f} \in \mathrm{CH}_3(S \times S, \BQ), \\
[S \times_{\BP^1} S \times_{\BP^1} S] = q'^*_2\mathsf{f} \, q'^*_3\mathsf{f} + q'^*_1\mathsf{f} \, q'^*_3\mathsf{f} +q'^*_1\mathsf{f} \, q'^*_2\mathsf{f} \in \mathrm{CH}_4(S \times S \times S, \BQ)
\end{gather*}
where the $p'_i: S \times S \to S$, $q'_i: S \times S \times S \to S$ (as well as the $q'_{ij}$ below) are the natural projections from the absolute products, we find that the pushforward of $q_1^*\mathsf{s} \, q_{23}^*[\Delta_{S/ \BP^1}]$ in~\eqref{eq:relbv}~is
\begin{equation} \label{eq:chaos}
q'^*_1 c \, q'^*_{23}[\Delta_S] + q'^*_1 \mathsf{s} \, q'^*_{23}\Delta_{S*}\mathsf{f} \in \mathrm{CH}_2(S \times S \times S, \BQ).
\end{equation}
Similarly, the pushforward of $q_2^*\mathsf{s} \, q_3^*\mathsf{s}$ is
\begin{equation} \label{eq:chaos2}
q'^*_{2}c \, q'^*_{3}c + q'^*_{1}\mathsf{f} \, q'^*_{2}\mathsf{s} \, q'^*_{3}c + q'^*_{1}\mathsf{f} \, q'^*_{2}c \, q'^*_{3}\mathsf{s} \in \mathrm{CH}_2(S \times S \times S, \BQ).
\end{equation}
Further expanding \eqref{eq:chaos} using the relation (either by \cite{BV} or by representing $\mathsf{f}$ by a rational~fiber)
\[
\Delta_{S*}\mathsf{f} = p_1'^*c \, p_2'^*\mathsf{f} + p_1'^*\mathsf{f} \, p_2'^*c \in \mathrm{CH}_1(S \times S, \BQ)
\]
and summing up the other two permutations of \eqref{eq:chaos} and \eqref{eq:chaos2}, we conclude that the pushforward of the left-hand side of \eqref{eq:relbv} is precisely
\[
[\Delta_{S}^{\mathrm{sm}}] - (q_1'^*c \, q_{23}'^*[\Delta_{S}] + \textrm{permutations}) + (q_2'^*c \, q_3'^*c + \textrm{permutations}),
\]
which vanishes by \cite{BV}.

As the left-hand side of \eqref{eq:relbv} is a multiple of \eqref{eq:eff} whose pushforward to $S \times S \times S$ is clearly nonzero, this multiple must be $0$. This proves the relation \eqref{eq:relbv}.
\end{proof}

\subsection{Relations to Beauville--Voisin} \label{Sec3.3}
The Beauville--Voisin conjectures \cite{Bea, Voi} refer to a series of open problems concerning the Chow ring/motive of compact hyper-K\"ahler varieties and the behavior of their Chern classes. Roughly speaking, the conjectures predict a multiplicative decomposition of the Chow ring which splits the conjectural Bloch--Beilinson filtration, and for which the Chern classes lie in the ``correct'' components. We will not review the history of the conjectures, but only mention a subconjecture recently investigated by Voisin \cite{Voi23}.

\begin{conj}[{\cite[Conjecture 1.5]{Voi23}}] \label{conjvoi}
Let $X$ be a compact hyper-K\"ahler variety of dimension~$2n$. Then for any $\BQ$-divisor $D$ with $q(D) = 0$ and any Chern monomial $c_I \in \mathrm{CH}^{2k}(X, \BQ)$, we have
\[
D^{n - k + 1} c_I = 0 \in \mathrm{CH}^{n + k + 1}(X, \BQ).
\]
\end{conj}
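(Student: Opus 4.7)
The plan is to reduce via the hyper-K\"ahler SYZ conjecture to a Lagrangian fibration setup and then exploit the special structure of Chern classes on such fibrations. The case $k = 0$ is Rie{\ss}'s theorem recalled in the excerpt, so one may assume $k \geq 1$.

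Assuming the hyper-K\"ahler SYZ conjecture for $(X, D)$, I would replace $X$ by a birational model and $D$ by a rational multiple to arrange $D = \pi^*H$ for a Lagrangian fibration $\pi: X \to B$ with $B \cong \BP^n$ and $H$ the hyperplane class; this reduction does not affect the statement, which only involves the Chow ring. Over the smooth locus $B^{\mathrm{sm}}$ of $\pi$, the holomorphic symplectic form canonically identifies $T_{X/B}$ with $\pi^*\Omega_B$, and the relative tangent sequence $0 \to T_{X/B} \to T_X \to \pi^*T_B \to 0$ then expresses $c(T_X)|_{\pi^{-1}(B^{\mathrm{sm}})}$ as a pullback from $B^{\mathrm{sm}}$. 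Choosing an arbitrary extension, any Chern monomial can thus be written as
\[
c_I = \pi^*\tilde\alpha + \gamma, \quad \tilde\alpha \in \mathrm{CH}^{2k}(B, \BQ),
\]
with $\gamma \in \mathrm{CH}^{2k}(X, \BQ)$ supported on the preimage $\pi^{-1}(\Sigma)$ of the discriminant $\Sigma \subset B$.

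The main term then vanishes for dimensional reasons: $D^{n - k + 1} \cdot \pi^*\tilde\alpha = \pi^*(H^{n - k + 1}\tilde\alpha) \in \pi^*\mathrm{CH}^{n + k + 1}(B, \BQ) = 0$, since $\dim B = n$ and $k \geq 0$. It remains to prove the vanishing of $D^{n - k + 1} \cdot \gamma$. Using that $\gamma$ is supported on $\pi^{-1}(\Sigma)$ with $\dim \Sigma \leq n - 1$, I would attempt an induction on the codimension of strata of a Whitney stratification of $\Sigma$: on each stratum $Z$ of dimension $d$, the vanishing $(H|_Z)^{d + 1} = 0$ in $\mathrm{CH}(Z, \BQ)$ should allow one to absorb a suitable number of factors of $D$, and a relative version of the symplectic tangent-sequence argument applied to the maximal abelian quotient of the fiber over $Z$ should yield an analogous decomposition ``pullback from $Z$ plus error supported in deeper strata,'' giving the inductive step.

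The hard part, and the reason the full conjecture is still open, is twofold: the reduction depends on the unresolved hyper-K\"ahler SYZ conjecture, and the inductive step requires a uniform understanding of the behavior of Chern classes on the degenerate fibers of an arbitrary Lagrangian fibration, which is not currently available. A natural first step would be to establish the conjecture for Lagrangian examples where the singular fibers are well understood, such as the Beauville--Mukai systems considered in Section~\ref{sec3.2} or the LSV fibrations; in those cases the explicit calculation for elliptic $K3$ surfaces already indicates the type of Beauville--Voisin-style identities that would enter the argument, and the absolute version of the conjecture for $k = 1$ could plausibly be reached by combining the structural results of \cite{ACLS} with Rie{\ss}'s theorem applied to the relative cotangent sheaf of $\pi$.
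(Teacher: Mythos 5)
This statement is \emph{Conjecture}~\ref{conjvoi}, quoted from Voisin~\cite{Voi23}, and the paper does \emph{not} prove it --- it is an open problem. You correctly recognize this, which is the right answer; there is no ``paper's own proof'' to compare your attempt against line by line. What the paper does offer is a conditional derivation, stated right after the conjecture: assuming the motivic Conjectures~\ref{conj3.1} and~\ref{conj3.5}, the multiplicativity of the decomposition $h(\overline{J}_C/B) = \bigoplus_{i=0}^{2g} h_i(\overline{J}_C/B)$ plus the placement $c_{2i}(\overline{J}_C) \in \mathrm{CH}^{2i}(h_{2i}(\overline{J}_C/B), \BQ)$ forces $\Theta^{g-k+1}c_I$ into the component $h_{2g+2}$, which is zero. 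That argument is entirely formal once the motivic decomposition and the Chern-class placement are granted; the paper then speculates that the techniques of \cite{R, ACLS} plus SYZ might push this from compactified Jacobian fibrations to arbitrary compact hyper-K\"ahler varieties.

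Your proposal is a genuinely different route: direct geometry rather than formal manipulation of a (conjectural) graded ring. Your reduction via SYZ to a Lagrangian fibration matches the paper's standing assumption, and your ``main term'' argument --- that Chern monomials restrict over the smooth locus to pullbacks from $B$, because the symplectic form identifies $T_{X/B}$ with $\pi^*\Omega_B$, so $D^{n-k+1}\pi^*\tilde\alpha = \pi^*(H^{n-k+1}\tilde\alpha) = 0$ for dimension reasons --- is exactly the mechanism behind the \emph{cohomological} statement $c_{2i}(X) \in H^{4i}_{(2i)}(X,\BQ)$ that the paper attributes to \cite{Voi23}. The real difference is how each attack deals with the error term supported over the discriminant. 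The paper's conjectures absorb it into the formalism: if the motivic decomposition is multiplicative and $c_{2i}$ lands in $h_{2i}$, there is nothing left to estimate. Your approach instead tries to control it by hand via a stratification of $\Sigma$, which buys you independence from the (quite strong) multiplicative motivic decomposition conjecture, at the cost of requiring a uniform structural understanding of Chern classes on degenerate Lagrangian fibers. You flag this honestly as the open obstruction, and that assessment is correct: the inductive step you sketch is not merely technically incomplete but is precisely where the problem becomes hard, since neither a clean relative tangent-sequence argument nor a canonical ``maximal abelian quotient'' of an arbitrary singular Lagrangian fiber is currently available, and a dimension count alone does not kill $D^{n-k+1}\gamma$ once $\dim Z \geq n-k$.

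In short: no gap in the sense of an unnoticed error --- you have correctly identified that the conjecture is open and where the difficulty lies --- but your proposed route is a direct/stratification-theoretic one, whereas the paper's conditional argument is a one-line consequence of motivic multiplicativity plus the Chern-class placement. It is worth keeping both in mind; the paper's route is cleaner if Conjectures~\ref{conj3.1} and~\ref{conj3.5} can be established, while yours, if it could be made to work even in cases, would give the vanishing without first proving a full multiplicative Chow--K\"unneth decomposition.
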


Here a Chern monomial refers to a monomial in the Chern classes $c_2(X), \ldots, c_{2n}(X)$. When~$X$ admits a Lagrangian fibration $\pi: X \to B$ and $D = \Theta$ is relatively ample, we can reinterpret Conjecture \ref{conjvoi} as a statement on the ``Chow-theoretic perversity'' of the Chern classes. This leads to the following prediction, which for convenience we state for compactified Jacobian fibrations under the Lagrangian assumption over possibly non-proper bases.

\begin{conj} \label{conj3.5}
Let $\pi : \overline{J}_C \to B$ be as in Conjecture \ref{conj3.1}. Then for $i \geq 0$, we have
\[
c_{2i}(\overline{J}_C) \in \mathrm{CH}^{2i}(h_{2i}(\overline{J}_C/B), \BQ).
\]    
\end{conj}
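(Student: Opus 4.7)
Proof proposal for Conjecture \ref{conj3.5}: Assuming Conjectures \ref{conj3.1} and \ref{conj3.2}, the motivic Beauville decomposition comes from the $\mathfrak{sl}_2$-triple $(e_0, h_0, f_0)$ with $h_0$ acting by $(k-g)$ on $h_k(\overline{J}_C/B)$. Writing $c_{2i}(\overline{J}_C) = \sum_k c_{2i}^{(k)}$ with $c_{2i}^{(k)} = \mathfrak{p}_k \cdot c_{2i}(\overline{J}_C)$, the conjecture is equivalent to $c_{2i}^{(k)} = 0$ for $k \neq 2i$. The plan is to attack this symmetrically: an \emph{upper bound} $c_{2i}^{(k)} = 0$ for $k > 2i$ from the Lagrangian structure of $\pi$, and a \emph{lower bound} $c_{2i}^{(k)} = 0$ for $k < 2i$ from Voisin's Conjecture \ref{conjvoi} together with Fourier duality.

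For the upper bound, I would use a motivic lift of the ``Perverse $\geq$ Chern'' inequality for Lagrangian fibrations. Cohomologically, the short exact sequence $0 \to \pi^*\Omega^1_B \to \Omega^1_{\overline{J}_C} \to \Omega^1_{\overline{J}_C/B} \to 0$ combined with $\Omega^1_{\overline{J}_C/B} \cong \pi^*T_B$ over the smooth locus of $\pi$ (via the relative symplectic form) expresses $c_{2i}(\overline{J}_C)$ modulo $P_{2i-1}$ in terms of classes pulled back from $B$, giving $c_{2i}(\overline{J}_C) \in P_{2i} H^{4i}(\overline{J}_C, \BQ)$; extending across singular fibers uses the codimension bound on the discriminant of $\pi$. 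Upgrading to the Chow level requires this computation to go through at the level of the motivic projectors $\mathfrak{p}_k$, which for $k > 2i$ involve $e_0$-raising operators that must annihilate $c_{2i}$ by the Voisin-type relation $\Theta^{g-i+1}\cdot c_{2i} = 0$ of Conjecture \ref{conjvoi}. For the lower bound, Voisin's relation reads $e_0^{g-i+1}(c_{2i}) = 0$; combined with $\mathfrak{sl}_2$-representation theory (a weight-$(k-g)$ vector killed by $e_0^{g-i+1}$ must lie in irreducibles of highest weight $\leq k + g - 2i + 1$), one obtains $k \geq i$. To sharpen $k \geq i$ to $k \geq 2i$, I would exploit Fourier duality $\mathfrak{F}(h_k) = h_{2g-k}$: an analogous Voisin-type relation for $\mathfrak{F}(c_{2i})$, obtained by computing $\mathfrak{F}(c_{2i})$ via GRR applied to Arinkin's normalized Poincar\'e sheaf (or, equivalently, via Beckmann--Markman extended Mukai vectors), yields the mirror constraint $k \leq 2i$.

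The main obstacle is Conjecture \ref{conjvoi} itself, which is open beyond the divisor case of Rie{\ss}. For the Beauville--Mukai system specifically, one might hope to prove the relevant instance using the Rozansky--Witten calculus and Markman's extended Mukai-vector formalism, in the spirit of Proposition \ref{prop2.9}: both $c_{2i}(\overline{J}_C)$ and $\Theta^{g-i+1}$ should admit descriptions inside a common algebraic package controlled by $\widetilde{H}(\overline{J}_C, \BC)$, in which the vanishing becomes an identity in the extended Mukai algebra. A secondary obstacle is the Chow-level GRR computation of $\mathfrak{F}(c_{2i})$ over the singular product $\overline{J}_C \times_B \overline{J}_C$, which would require extending the tau-class framework of \cite{MSY} to higher Chern monomials. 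The parity case $k = 2i \pm 1$ should be ruled out using that $c_{2i}$ is an algebraic class of Hodge type $(2i, 2i)$ together with Taelman's Hodge-theoretic description of the LLV action (Lemma \ref{lem2.12}), which forces only even shifts in the $h_0$-weight when restricted to the $(p,p)$-part of the Hodge decomposition.
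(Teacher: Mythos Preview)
The statement you are trying to prove is presented in the paper as a \emph{conjecture}, not a theorem; the paper offers no proof. The only cases the paper claims are the trivial one of elliptic $K3$ surfaces (where $c_2 = 24c$ lies in $\mathrm{CH}^2(h_2(S/\BP^1), \BQ)$ by inspection) and the \emph{cohomological} shadow for the Beauville--Mukai and Hitchin systems. So there is no proof in the paper to compare your proposal against.

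Your proposal is not a proof either, as you yourself acknowledge: it is a reduction of Conjecture~\ref{conj3.5} to Voisin's Conjecture~\ref{conjvoi} plus a Fourier-dual version of it, both of which are open. Note also that the logical direction in the paper runs the \emph{opposite} way: the proposition immediately following Conjecture~\ref{conj3.5} shows that Conjectures~\ref{conj3.1} and~\ref{conj3.5} together \emph{imply} the relevant instance of Voisin's vanishing $\Theta^{g-k+1}c_I = 0$. Your plan is essentially to run this implication backwards, which is not automatic: knowing $e_0^{g-i+1}(c_{2i}) = 0$ only gives (via the $\mathfrak{sl}_2$ argument you sketch) the bound $k \geq i$ on the surviving components $c_{2i}^{(k)}$, not $k \geq 2i$. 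Closing the gap requires the Fourier-dual vanishing $e_0^{g-i+1}(\mathfrak{F}(c_{2i})) = 0$, and there is no reason to expect $\mathfrak{F}(c_{2i})$ to be a Chern monomial or even a polynomial in Chern classes to which Conjecture~\ref{conjvoi} would apply; your GRR-over-singular-base computation is a genuine additional problem, not a technicality. Likewise, your ``upper bound'' step conflates the cohomological perversity estimate (which is known) with a Chow-level statement about the motivic projectors $\mathfrak{p}_k$ (which is precisely what is being conjectured). In short, the proposal identifies plausible ingredients but does not reduce the conjecture to anything known.
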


\begin{rmk}
Conjecture \ref{conj3.5} is immediate for the elliptic $K3$ surface $\pi: S \to \BP^1$ of Section~\ref{sec3.2} as $c \in \mathrm{CH}^2(h_2(S/\BP^1), \BQ)$. The weaker, cohomological statement $c_{2i}(\overline{J}_C) \in H^{4i}_{(2i)}(\overline{J}_C, \BQ)$ holds for both the Beauville--Mukai system (and more generally for any Lagrangian fibration $\pi: X \to B$ from a compact hyper-K\"ahler variety by the calculations in \cite[Introduction]{Voi23}) and the Hitchin system. In the Hitchin case, this can be seen as a consequence of $P = W$ and the natural splitting of the weight filtration on the Betti side.
\end{rmk}

\begin{prop}
Assume Conjectures \ref{conj3.1} and \ref{conj3.5}. Then for any $\Theta \in \mathrm{CH}^1(h_2(\overline{J}_C/B), \BQ)$ and any Chern monomial $c_I \in \mathrm{CH}^{2k}(\overline{J}_C, \BQ)$, we have
\[
\Theta^{g - k + 1} c_I = 0 \in \mathrm{CH}^{g + k + 1}(\overline{J}_C, \BQ).
\]
\end{prop}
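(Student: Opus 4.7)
The plan is to run a pure weight-counting argument using the multiplicative motivic decomposition of Conjecture \ref{conj3.1} and the Chow-motivic perversity of Chern classes from Conjecture \ref{conj3.5}. The crucial observation is that the decomposition terminates at weight $2g$, \emph{i.e.}~$h_i(\overline{J}_C/B) = 0$ for $i > 2g$, so any class that multiplicativity forces into such a component must automatically vanish.

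First I would use that $\Theta \in \mathrm{CH}^1(h_2(\overline{J}_C/B), \BQ)$ by assumption and iterate the multiplicativity of Conjecture \ref{conj3.1}(b) (which is the vanishing $\mathfrak{p}_k \circ [\Delta^{\mathrm{sm}}_{\overline{J}_C/B}] \circ (\mathfrak{p}_i \times \mathfrak{p}_j) = 0$ whenever $k \neq i + j$) to conclude
\[
\Theta^{g-k+1} \in \mathrm{CH}^{g-k+1}(h_{2(g-k+1)}(\overline{J}_C/B), \BQ).
\]
Next I would write a Chern monomial $c_I \in \mathrm{CH}^{2k}(\overline{J}_C, \BQ)$ as $c_I = \prod_l c_{2i_l}(\overline{J}_C)$ with $\sum_l i_l = k$. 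By Conjecture \ref{conj3.5}, each factor $c_{2i_l}(\overline{J}_C)$ lies in $\mathrm{CH}^{2i_l}(h_{2i_l}(\overline{J}_C/B), \BQ)$, so multiplicativity applied to the product gives $c_I \in \mathrm{CH}^{2k}(h_{2k}(\overline{J}_C/B), \BQ)$.

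Combining these two statements and applying multiplicativity one final time, the product $\Theta^{g-k+1} c_I$ lies in $\mathrm{CH}^{g+k+1}(h_{2(g-k+1)+2k}(\overline{J}_C/B), \BQ) = \mathrm{CH}^{g+k+1}(h_{2g+2}(\overline{J}_C/B), \BQ)$. Since the component of weight $2g+2$ does not appear in the decomposition $h(\overline{J}_C/B) = \bigoplus_{i=0}^{2g} h_i(\overline{J}_C/B)$, this Chow group is zero, whence the desired vanishing. There is no real obstacle in the derivation itself: the whole proposition is a formal degree-counting consequence once the two conjectures are granted, and all the mathematical depth has been packaged into those inputs.
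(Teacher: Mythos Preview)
Your proposal is correct and follows exactly the same weight-counting argument as the paper's proof, which simply records that multiplicativity and Conjecture~\ref{conj3.5} place $\Theta^{g-k+1}c_I$ in $\mathrm{CH}^{g+k+1}(h_{2g+2}(\overline{J}_C/B), \BQ) = 0$. You have merely spelled out the intermediate steps that the paper compresses into a single line.
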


\begin{proof}
By the multiplicativity of the motivic decomposition \eqref{eq:motdec} and Conjecture \ref{conj3.5}, we find
\[
\Theta^{g - k + 1} c_I \in \mathrm{CH}^{g + k + 1}(h_{2(g - k + 1) + 2k}(\overline{J}_C/B), \BQ) = \mathrm{CH}^{g + k + 1}(h_{2g + 2}(\overline{J}_C/B), \BQ) = 0. \qedhere
\]
\end{proof}

It is plausible that with the techniques of \cite{R, ACLS}, one can possibly deduce Conjecture \ref{conjvoi} from the analogues of Conjectures \ref{conj3.1} and \ref{conj3.5} for more general compact hyper-K\"ahler varieties together with the hyper-K\"ahler SYZ conjecture.

We also comment on how Conjectures \ref{conj3.1}, \ref{conj3.2}, and \ref{conj3.5} are related to the full Beauville--Voisin conjecture. Previously, Oberdieck \cite{Obe} initiated an approach to the multiplicative decomposition problem by lifting Lefschetz triples (or more generally the N\'eron--Severi part of the LLV algebra) to the level of Chow groups/motives. This was carried out in full for the Hilbert schemes of $K3$ surfaces and partially in dimension $4$ in the subsequent papers~\cite{NOY, Kre}. When the compact hyper-K\"ahler variety admits a Lagrangian fibration, the relative structure may lead to a \emph{two-step} construction of the eventual multiplicative decomposition. For illustration purposes we restrict ourselves to the Beauville--Mukai system $\pi: \overline{J}_C \to B$.

\medskip
\noindent {\bf Step 1.} Prove Conjectures \ref{conj3.1}, \ref{conj3.2}, and \ref{conj3.5} for $\pi: \overline{J}_C \to B$. It is worth mentioning again that all three conjectures concerning the \emph{relative} Chow motive of $\overline{J}_C$ are stronger than predicted by Beauville--Voisin, and are interesting in their own right.

\medskip
\noindent {\bf Step 2.} Consider the pushforward of the motivic decomposition \eqref{eq:motdec} via \eqref{eq:relabs}. Look for a second $\mathfrak{sl}_2$-triple $(e_1, h_1, f_1)$ with
\[
e_1 := \Delta_*\sH \in \Corr^1(\overline{J}_C, \overline{J}_C)
\]
where $\sH \in \mathrm{CH}^1(\overline{J}_C, \BQ)$ is the pullback of the hyperplane class of $B \simeq \BP^g$, thus lifting \mbox{$\mathfrak{sl}_2 \times \mathfrak{sl}_2 \subset \mathfrak{g}(\overline{J}_C)$} (see \cite{SY}) to an action on the \emph{absolute} Chow motive of $\overline{J}_C$. Further decompose the image of each $h_i(\overline{J}_C/B)$ into eigen-motives with respect to $h_1$. Finally, rearrange the eigen-motives with respect to $h := h_0 + h_1$, and use it to detect multiplicativity as well as the locations of the Chern classes.

\medskip
Similar two-step approaches have succeeded in proving cases of the Lefschetz standard conjecture for compact hyper-K\"ahler varieties in \cite{Voi22, ACLS}.

\section{Proof of Theorem \ref{thm0.40}}\label{Sec4}
In this section we prove Theorem \ref{thm0.40} using tautological relations on the universal Picard stack. For $g\geq 2$, let $\overline{\mathcal{M}}_{g}$ be the moduli stack of stable curves of genus $g$. It is the Deligne--Mumford compactification of the moduli stack of genus $g$ nonsingular projective curves \mbox{$\CM_g \subset \overline{\CM}_{g}$}, and every curve on the boundary $\overline{\CM}_g\setminus \CM_g$ has at worst simple nodes as singular points. More generally we also consider the Deligne--Mumford moduli stack of stable curves with marked points $\CM_{g,n} \subset \overline{\CM}_{g,n}$. 

We denote by $\mg\subset \overline{\mathcal{M}}_g$ the open locus where the curves are integral and have at most one node. Let $\CC_g^{\leq 1}\to \mg$ be the universal curve with $\pi: \overline{J}_g^{\leq 1}\to \mg$ the corresponding compactified Jacobian fibration.  

Our main result is the following. 

\begin{thm}\label{thm4.1}
We have the following statements concerning generalized theta divisors.
\begin{enumerate}
    \item[(a)] For $g\geq 3$, a generalized theta divisor does not exist for $\pi: \overline{J}_g^{\leq 1} \to \mg$.
    \item[(b)] Let $\overline{\CM}^\mathrm{int}_2 \subset \overline{\CM}_2$ be the open locus of integral stable curves of genus $2$ with $\pi^{\mathrm{int}}: \overline{J}^{\mathrm{int}}_2 \to \overline{\CM}^\mathrm{int}_2$ the corresponding compactified Jacobian fibration. Then a generalized theta divisor does not exist for $\pi^{\mathrm{int}}$.
\end{enumerate}
\end{thm}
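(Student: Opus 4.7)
The plan, for both parts, is to express any candidate generalized theta divisor $\Theta$ in terms of tautological generators of the rational Picard group of the universal compactified Jacobian, and then to derive an overdetermined system of tautological identities from the vanishings $\Theta^{g+k}=0$ that follow from $\Theta^{g+1}=0$. The Picard setup is clean: the relative Picard over the base is rationally generated by a normalized relative theta class $\theta$, while the rational Picard of the base is generated by the Hodge class $\lambda$ and the boundary class $\delta_{\mathrm{irr}}$ for part (a) with $g\geq 3$, and by $\lambda$ alone for part (b), where the Mumford relation $10\lambda=\delta_{\mathrm{irr}}+2\delta_1$ on $\overline{\CM}_2$ collapses to $\delta_{\mathrm{irr}}=10\lambda$ on $\mtwo$. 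After normalizing the $\theta$-coefficient to $1$, any relatively ample $\BQ$-divisor may be written as $\Theta=\theta+\pi^*\eta$ with $\eta$ a rational divisor class on the base.

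The next step is to push forward powers of $\Theta$. Standard tautological theory yields $\pi_*(\theta^k)=0$ for $k<g$, $\pi_*(\theta^g)=g!$, and $\pi_*(\theta^{g+j})\in H^{2j}(\text{base},\BQ)$ is a specific nonzero tautological class for $j\geq 1$, accessible via Grothendieck--Riemann--Roch applied to the Poincar\'e line bundle on the universal compactified Jacobian (equivalently via Mumford's formulas for the Chern character of the Hodge bundle, with explicit boundary corrections). Applying $\pi_*$ to $\Theta^{g+1}=0$ then gives
\[
\pi_*(\theta^{g+1})+(g+1)!\,\eta=0,
\]
which uniquely determines $\eta=\eta_0$. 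Applying $\pi_*$ to $\Theta^{g+2}=\Theta\cdot\Theta^{g+1}=0$, expanding, and substituting $\eta=\eta_0$ produces the quadratic identity
\[
\pi_*(\theta^{g+2})=\frac{g+2}{2(g+1)\,g!}\,\bigl(\pi_*(\theta^{g+1})\bigr)^{2}\in H^4(\text{base},\BQ),
\]
and the plan is to verify that this identity \emph{fails}, yielding the desired contradiction. If this single quadratic identity does not already suffice, pushing forward the higher vanishings $\Theta^{g+r}=0$ for $r\geq 3$ produces further tautological constraints on the same $\eta_0$, eventually forcing an inconsistency.

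The principal obstacle is carrying out the explicit tautological computation of $\pi_*(\theta^{g+1})$ and $\pi_*(\theta^{g+2})$ and checking non-proportionality in $H^4$. The boundary contributions come from the compactified Jacobian of an irreducible nodal curve, which is a $\BP^1$-bundle over the Jacobian of the normalization with its zero and infinity sections identified by translation; the associated corrections along $\delta_{\mathrm{irr}}$ can be computed via local models. For part (b), the rank-one Picard of $\mtwo$ reduces the verification to a single numerical comparison between rational multiples of $\lambda$ and $\lambda^2$, which can be evaluated on an explicit test family of genus-$2$ curves. For part (a), one must show that $\pi_*(\theta^{g+2})$ and $(\pi_*(\theta^{g+1}))^{2}$ are not proportional in $H^4(\mg,\BQ)$; this can be detected by pairing against two-dimensional test cycles supported near the boundary, where the $\BP^1$-geometry of the degenerate fiber makes the two sides behave differently.
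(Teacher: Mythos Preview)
Your overall strategy---writing $\Theta = \theta + \pi^*\eta$ and extracting constraints from $\pi_*(\Theta^{g+k}) = 0$---parallels the paper's approach in outline, but the decisive technical step is left open. Everything hinges on computing $\pi_*(\theta^{g+1})$ (and, in your plan, $\pi_*(\theta^{g+2})$) explicitly as tautological classes on the base, and the tools you invoke (Grothendieck--Riemann--Roch for the Poincar\'e bundle, Mumford's Hodge-bundle formulas, local $\BP^1$-models near the boundary) do not give a clear path to this. GRR computes Chern characters of pushforwards of sheaves, not powers of $\theta$ directly, and the boundary contribution on the \emph{compactified} Jacobian does not reduce to a naive excess-intersection formula on the $\BP^1$-bundle model.

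The paper supplies exactly this missing input via the universal double ramification relation $\mathsf{P}_g^{g+1} = 0$ of Bae--Holmes--Pandharipande--Schmitt--Schwarz on the universal Picard stack, refined by the multiplication-by-$N$ action to isolate the top weight. This yields an expression of $\theta^{g+1}/(g+1)!$ on $\overline{J}_g^{\leq 1}$ as an explicit pushforward from $J_{g-1,2}$ (Proposition~\ref{pro:4.1}), whence $\pi_*(\theta^{g+1}) = \tfrac{(g+1)!}{48}\,\delta$ (Corollary~\ref{cor:4.2}). This is the genuine technical engine, and there is no evident shortcut. With it in hand, the paper's argument for $g \geq 4$ is in fact \emph{simpler} than your second-order test: rather than computing $\pi_*(\theta^{g+2})$, one pulls $\Theta^{g+1}=0$ back to $J_{g-1,2}$ and reads off the weight-$(2g-2)$ piece, forcing $b=\tfrac{1}{2}$; but Corollary~\ref{cor:4.2} forces $b=-\tfrac{1}{48}$. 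For $g=3$ and $g=2$ the paper does push forward $\theta\cdot\Theta^{g+1}=0$, much as you propose, but the resulting quadratic in $b$ requires the explicit lower-weight term $\alpha$ from Proposition~\ref{pro:4.1}, again supplied by the DR machinery. Your proposal has the right shape but is missing the tool that makes the computation possible.
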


By the discussions of Section \ref{Sec1.3}, Theorem \ref{thm4.1} proves a version of Theorem \ref{thm0.40} when the base $B$ is a nonsingular Deligne--Mumford stack. We will explain in Section \ref{final} that the same argument actually also proves Theorem \ref{thm0.40} with $B$ a nonsingular quasi-projective variety.

\subsection{The universal Picard stack}

We recall some basic facts about the universal Picard stack; our reference is \cite{BHPSS}.

Let $\mathfrak{M}_g$ be the moduli stack of (not necessarily stable) nodal curves of genus $g$ and let \mbox{$\mathfrak{C}_g\to \mathfrak{M}_g$} be the universal curve. We consider the universal Picard stack $\ppic_g\to\mathfrak{M}_g$ parameterizing total degree $0$ line bundles. The relative Picard stack $\ppic^\rel_g$ is the quotient of $\ppic_g$ by relative inertia $B\BG_m$ over~$\mathfrak{M}_g$; in other words, it is the rigidification of the universal Picard stack with respect to the automorphism $\BG_m$.


We consider the universal line bundle $\CL$ on the universal curve $p: \mathfrak{C}_{\ppic_g} \to \ppic_g$; see~\cite{BHPSS}. This yields the tautological class
\begin{equation*}\label{eq:theta}
    \theta := -\frac{1}{2}p_*(c_1(\CL)^2) \in \Chow^1(\ppic_g, \BQ).
\end{equation*}
Since this class is invariant under twisting $\CL$ by a line bundle from the base, it descends to
\[
\theta \in
\Chow^1(\ppic^\rel_g, \BQ).
\]

For a family of genus $g$ integral projective nodal curves $C\to B$, the relative compactified Jacobian admits a natural morphism to the relative Picard stack
\[
\overline{J}_C\to \ppic^\rel_g.
\]
This is because the universal sheaf on $C\times_B\overline{J}_C$ can be interpreted as an admissible line bundle on the quasi-stable model of the family of nodal curves by \cite{EP}. It is necessary to allow unstable nodal curves and line bundles which are not of multi-degree $0$ for such a map to exist. In particular we have a natural morphism
\[
\overline{J}_g^{\leq 1} \rightarrow \ppic^\rel_g.
\]

\subsection{Universal double ramification cycle relations}
For the relative compactified Jacobian $\pi:\overline{J}_g^{\leq 1}\to\mg$, we have the following commutative diagram
\begin{equation}\label{eq:Jbar}
    \begin{tikzcd}
    J_{g-1, 2} \ar[r, "t"] & J_{g-1, 2}^{-1} \ar[r,"\epsilon"]\ar[dr,"\pi'"'] & \overline{J}_\delta \ar[d]\ar[r,"\iota"] & \overline{J}_g^{\leq 1} \ar[d,"\pi"] & J_g \ar[l,"j"']\ar[d] \\
    & & \mathcal{M}_{g-1,2} \ar[r,"\iota"] & \mg & \mathcal{M}_g\ar[l,"j"']
    \end{tikzcd}
\end{equation}
where $j:\mathcal{M}_g\hookrightarrow\mg $ is the natural open embedding, $\iota:\CM_{g-1,2} \to \mg$ is the morphism gluing the two marked points on the curve, and $J_{g-1,2}^{-1}$ is the relative Jacobian of degree $-1$ line bundles over $\mathcal{M}_{g-1,2}$. The morphism $\epsilon$ is given by $(\widetilde{C},L)\mapsto (C,\nu_*L)$ where $\nu: \widetilde{C}\to C$ is the normalization of a nodal curve with one self-node. The image of $\iota\circ\epsilon$ is the singular locus of~$\pi$. The relative Jacobian $J_{g-1,2}^{-1}$ is isomorphic to $J_{g-1,2}$ under a translation
    \begin{equation*}
        t:J_{g-1,2}\xrightarrow{\cong} J_{g-1,2}^{-1}, \quad (C,x_1,x_2,L)\mapsto (C,x_1,x_2,L(-x_2)).
    \end{equation*} 
The diagram can be deduced from the description of the torus rank~$1$ boundary of the moduli space of principally polarized abelian varieties in \cite{MumfordAb}.

\begin{prop}\label{pro:4.1}
    On the relative compactified Jacobian $\overline{J}_g^{\leq 1}$, we have 
    \begin{equation*}
        \frac{\theta^{g+1}}{(g+1)!} = \frac{1}{48}\iota_*\epsilon_*t_*\frac{\theta^{g-1}}{(g-1)!} + \iota_*\epsilon_*t_*\alpha \in \Chow^{g+1}(\overline{J}_g^{\leq 1},\BQ)
    \end{equation*}
    for some class $\alpha\in \bigoplus_{w<2g-2}\Chow^{g-1}_{(w)}(J_{g-1, 2}, \BQ)$. Here $w$ stands for the weight with respect to the ``multiplication by $N$'' map on $J_{g-1, 2}$.
\end{prop}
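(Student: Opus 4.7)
The plan is to prove Proposition~\ref{pro:4.1} in three stages: a reduction to the singular locus via the Poincar\'e formula on the smooth Jacobian, a Hain-type computation of the pullback of $\theta$ along the normalization of the self-node, and a weight-theoretic analysis of the remaining terms under the Beauville decomposition of $J_{g-1,2}\to \CM_{g-1,2}$.

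First I would show that $\theta^{g+1}/(g+1)!$ restricts to zero on the smooth locus $J_g\subset \overline{J}_g^{\leq 1}$. Over $J_g$ the fibration $\pi$ is a principally polarized abelian scheme of relative dimension $g$ with zero section $e$, and the Poincar\'e formula $\theta^g/g!=e_*[\CM_g]$ combined with $e^*\theta=0$ --- the latter following immediately from $\theta=-\tfrac{1}{2}p_*c_1(\CL)^2$ and the triviality of $\CL$ along the zero section --- gives
\[
\frac{\theta^{g+1}}{(g+1)!}=\frac{1}{g+1}\,e_*(e^*\theta)=0\in\Chow^{g+1}(J_g,\BQ).
\]
Excision on the pair $(\overline{J}_g^{\leq 1},\overline{J}_\delta)$, where $\overline{J}_\delta$ is the codimension-$1$ preimage of the nodal boundary of $\mg$, then yields $\theta^{g+1}/(g+1)!=\iota_*\gamma$ for some $\gamma\in \Chow^g(\overline{J}_\delta,\BQ)$. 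To prove the proposition it remains to identify $\gamma$ after pullback through the resolution $\epsilon: J_{g-1,2}^{-1}\to\overline{J}_\delta$.

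The core of the argument is a Hain-type identity for the restriction of $\theta$ near the self-node. Applying Grothendieck--Riemann--Roch to the normalization $\nu:\widetilde C\to C$ together with the identification $\epsilon^*\CL=\nu_*L$, one obtains an expansion of the form
\[
\epsilon^*\iota^*\theta \;=\; \widetilde\theta \;+\; \tfrac{1}{2}(\psi_1+\psi_2) \;+\; \beta,
\]
where $\widetilde\theta=(t^{-1})^*\theta$ is the theta class transported from $J_{g-1,2}$, the classes $\psi_1,\psi_2$ are the cotangent classes at the two preimages of the node, and $\beta$ is a divisor pulled back from $\CM_{g-1,2}$. Raising to the $(g+1)$-st power, dividing by $(g+1)!$, and pushing forward through $\iota\circ\epsilon$, the only monomial contributing in top Beauville weight on $J_{g-1,2}$ is $\widetilde\theta^{g-1}/(g-1)!$. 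The excess normal contribution of the codimension-$2$ composite $\iota\circ\epsilon$ supplies an additional $(\psi_1+\psi_2)/24$ factor (the Mumford/Witten constant for a single psi insertion at a self-node) times an automorphism factor $1/2$ from the exchange of the two branches, producing the universal coefficient $1/(2\cdot 24)=1/48$ in front of $\iota_*\epsilon_*t_*(\theta^{g-1}/(g-1)!)$.

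Finally, the Beauville-weight bound for the remainder follows from a short bookkeeping. Since $[N]^*$ on the relative abelian scheme $J_{g-1,2}\to \CM_{g-1,2}$ acts as $N^{2k}$ on $\theta^k$ and trivially on classes pulled back from the base, any monomial of the form $\theta^k\cdot(\text{base class})$ with $k<g-1$ lies in $\Chow^{g-1}_{(w)}(J_{g-1,2},\BQ)$ with $w=2k<2g-2$. Hence every non-leading term of $\gamma$ lands in $\bigoplus_{w<2g-2}\Chow^{g-1}_{(w)}(J_{g-1,2},\BQ)$, giving the class $\alpha$ of the statement. The principal obstacle I expect is pinning down the $1/48$ coefficient rigorously: this requires a careful Grothendieck--Riemann--Roch analysis of the normalization map in a neighborhood of a self-node, ideally cross-checked against the one-edge-loop contribution of Pixton's polynomial in the BHPSS universal DR-cycle relations on $\ppic_g^\rel$. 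The weight statement for $\alpha$, in contrast, is essentially automatic from the Beauville decomposition on $J_{g-1,2}$.
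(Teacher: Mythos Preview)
Your excision/pullback scheme has a structural gap. First, the Poincar\'e-formula step vanishes $\theta^{g+1}$ only on $J_g$, so excision localizes to the codimension-$1$ boundary $\overline{J}_\delta$; but the proposition asserts the class lies in the image of $(\iota\circ\epsilon)_*$, i.e.\ is supported on the codimension-$2$ non-locally-free locus. For that you would need the stronger vanishing $\theta^{g+1}=0$ on the open line-bundle locus $\Jzero_g$ (this is \cite[Theorem~4.9]{AHPL}). Second, and more seriously, even with correct localization you cannot recover the lift $\gamma$ by ``pullback through the resolution~$\epsilon$'': excision gives existence but not uniqueness, and your proposed maneuver---raise $\epsilon^*\iota^*\theta$ to the $(g{+}1)$-st power and push forward---by the projection formula yields $\theta^{g+1}\cdot(\iota\circ\epsilon)_*[J_{g-1,2}^{-1}]$, a class of codimension $g+3$, not $\theta^{g+1}$ itself. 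The $1/48$ heuristic ($\tfrac{1}{24}\times\tfrac{1}{2}$ from ``excess normal contribution'' and an ``automorphism factor'') is not a computation. A minor point: your Hain-type formula has the correction $\beta$ pulled back from $\CM_{g-1,2}$, but before translation the actual correction is $-\xi_2$, of Beauville weight~$1$; only after the specific $t$ does it become $\theta+\tfrac{1}{2}(\psi_1+\psi_2)$.

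The paper argues entirely differently. It starts from the universal double ramification relation $\mathsf{P}_g^{g+1}=0$ of \cite{BHPSS} on $\ppic_g^\rel$, whose Pixton expansion already writes $\theta^{g+1}/(g+1)!$ as an explicit sum of boundary pushforwards with polynomial coefficients $f_{k,\ell,m}(d)$. Applying $[N]^*$ gives an expression polynomial in $N$; taking the coefficient of $N^{2g+2}$ and pulling back to $\overline{J}_g^{\leq 1}$ kills all strata except the quasi-stable graph with a genus-$(g{-}1)$ and a genus-$0$ vertex joined by two edges. The $1/48$ is then simply the leading coefficient of $f_{g-1,0,0}(d)=-d^4/48+d^2/24-1/240$, and the remaining $\psi$- and $\xi_2$-monomials are packaged into $\alpha$ with weights $<2g-2$. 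You correctly anticipate that the BHPSS relation is relevant, but it is not a cross-check---it is the whole argument; the direct GRR route you sketch does not produce the coefficient.
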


\begin{proof}
    Recall the universal double ramification cycle relation\footnote{In general the universal double ramification cycle relation depends on a vector $A$ of integers. Here we only consider the special case $A=\emptyset$.} \cite[Theorem 8]{BHPSS} on the universal Picard stack:
    \begin{equation*}
        \mathsf{P}^{g+1}_g := \mathsf{P}^{g+1}_{g,A=\emptyset} = 0 \in \Chow^{g + 1}_{\mathsf{op}}(\ppic_g,\BQ).
    \end{equation*}
     We restrict Pixton's formula for $\mathsf{P}_g^{g+1}$ to the relative Picard stack over $\mg$; we thus focus on the substack $\mathfrak{M}_g^{\leq 1}\subset\mathfrak{M}_g$ where the quasi-stable curve stabilizes to a stable curve with at most one self-node. Let $i:\ppic_\delta^\rel\to\ppic^\rel_g$ be the inclusion of the boundary strata where the underlying curve has at least one node.  For an integer $d$, let $i^d:\ppic^\rel_{\Gamma_d}\to\ppic^\rel_g$ be the boundary stratum corresponding to a decorated prestable graph $\Gamma_d$ with two vertices of degree splitting $d,-d$, connected by two edges $e_1=(h_1,h_1')$ and $e_2=(h_2,h_2')$. Then Pixton's formula has the form
    \begin{multline} \label{eq:uniDR}
        \mathsf{P}^{g+1}_g = \frac{\theta^{g+1}}{(g+1)!}+\sum_{a+b=g}\mathrm{cst}\cdot \theta^a i_*\big((\psi_h+\psi_{h'})^b\big) \\
        {}+ \sum_{d\in\BZ}\, \sum_{k+\ell+m=g-1} f_{k,\ell,m}(d)\cdot \theta^k i^d_*\big((\psi_{h_1}+\psi_{h_1'})^\ell(\psi_{h_2}+\psi_{h_2'})^{m}\big),
    \end{multline}
    where $f_{k,\ell, m}$ is a polynomial with rational coefficients of degree at most $2(\ell+m+2)$. Here the polynomiality of $f_{k,\ell,m}$ follows from Faulhaber's formula on sum of powers applied to Pixton's formula. We refer to \cite[Section 0.3.5]{BHPSS} for notations, and explicit expressions of cst and $f_{k,\ell,m}$. 
    
    For our purpose, we further refine the relation $\mathsf{P}_g^{g + 1} = 0$ using the ``multiplication by~$N$''~map
    \[
    [N]:\ppic_g^\rel\to \ppic_g^\rel.
    \]
    For the first two terms of \eqref{eq:uniDR}, we have \[
    [N]^*\theta^{g+1} = N^{2g+2}\cdot\theta^{g+1}
    \]
    and
    \[[N]^*\left(\theta^a i_*\big((\psi_h+\psi_{h'})^b\big)\right) = N^{2a}\cdot \theta^a i_*\big((\psi_h+\psi_{h'})\big)^b\]
    respectively. For the third term of \eqref{eq:uniDR}, we have
    \[[N]^*\left(\theta^ki^d_*\big((\psi_{h_1}+\psi_{h_1'})^\ell(\psi_{h_2}+\psi_{h_2'})^{m}\big)\right) = N^{2k}\cdot \theta^ki^{d/N}_*\big((\psi_{h_1}+\psi_{h_1'})^\ell(\psi_{h_2}+\psi_{h_2'})^{m}\big)\]
    if $N$ divides $d$ and zero otherwise. Applying $[N]^*$ to the right-hand side of \eqref{eq:uniDR}, the first two terms are clearly polynomials in $N$ and for the third term, we have
    \begin{align*}
        &[N]^* \sum_{d\in\BZ}\, \sum_{k+\ell+m=g-1} f_{k,\ell,m}(d)\cdot \theta^k i^d_*\big((\psi_{h_1}+\psi_{h_1'})^\ell(\psi_{h_2}+\psi_{h_2'})^{m}\big)\\ 
        & = \sum_{N|d}\, \sum_{k+\ell+m=g-1} f_{k,\ell,m}(d)\cdot N^{2k}\cdot\theta^k i^{d/N}_*\big((\psi_{h_1}+\psi_{h_1'})^\ell(\psi_{h_2}+\psi_{h_2'})^{m}\big)\\
        & = \sum_{d\in\BZ}\, \sum_{k+\ell+m=g-1} f_{k,\ell,m}(Nd)\cdot N^{2k}\cdot\theta^k i^d_*\big((\psi_{h_1}+\psi_{h_1'})^\ell(\psi_{h_2}+\psi_{h_2'})^{m}\big)
    \end{align*}
    which is a polynomial in $N$ because $f_{k,\ell,m}(d)$ are polynomials.
    Therefore we find an expression of $[N]^*\mathsf{P}^{g+1}_g = 0$ which is a polynomial in $N$, so that each coefficient gives rise to a relation on $\ppic^\rel_g$. We take the coefficient of $N^{2g+2}$ in the expression of $[N]^*\mathsf{P}_g^{g+1} = 0$ and pull it back to $\overline{J}_g^{\leq 1}$.  In particular, the second summand of the right-hand side of \eqref{eq:uniDR} does not contribute for weight reasons. This gives the desired relation on~$\overline{J}_g^{\leq 1}$ of the proposition.  
    
  For the reader's convenience, we spell out in the following more details in deducing this relation. The contribution from the prestable graph with two vertices of
    \begin{enumerate}
        \item[$\bullet$]  genus $g-1$ with degree $-1$, and 
        \item[$\bullet$] genus $0$ with degree $1$         
    \end{enumerate}
    connected by two edges corresponds to a class under the pushforward $\iota_*\epsilon_*t_*$. We denote the two edges by $e_1=(h_1,h_1')$ and $e_2=(h_2,h_2')$ where $h_1,h_2$ are the two half-edges attached to the genus~$g-1$ vertex.  After pulling back to $\overline{J}_g^{\leq 1}$, other strata do not contribute by the quasi-stable model of a rank $1$ torsion-free sheaf on a nodal curve. Then we have
    \begin{equation}\label{eq33}
    \frac{\theta^{g+1}}{(g+1)!} = \frac{1}{48}\iota_*\epsilon_*t_*\frac{\theta^{g-1}}{(g-1)!} + \iota_*\epsilon_*t_*(\text{other terms}).\end{equation}
    Here the coefficient $\frac{1}{48}$ associated with the leading term of the right-hand side can be read of from the explicit computation of the polynomial: 
    \[
    f_{g-1,0,0}(d) = -\frac{d^4}{48} +\frac{d^2}{24} - \frac{1}{240}.
    \]
    The other terms correspond to a linear combination of prestable graphs decorated by monomials of $\theta$ and $\psi$-classes. The $\psi$-classes are the contributions of the first Chern class of the cotangent line bundle at the various markings (see also Section \ref{Sec4.3}). 
    
    We explain how to interpret the classes given by prestable graphs decorated by $\psi$-monomials as classes pushed forward from $J_{g-1,2}$. 
    Let $\CL$ be the universal line bundle on the universal curve over $J_{g-1, 2}$ trivialized along the first marking, and let $\xi_2\in\Chow^1_{(1)}(J_{g-1, 2}, \BQ)$ be $c_1(\CL)$ pulled back along the second marking. Analogous classes are defined on $J_{g-1,2}^{-1}$.
    We take the coefficient of~$N^1$ in the expression of $[N]^*\mathsf{P}^1_{0,(1,-1)} = 0$. This gives a relation
    \[ \psi_{h'_2} = -\psi_{h'_1}= \xi_{h'_1}-\xi_{h'_2}\]
    at the unstable genus $0$ vertex. Since $\xi_{h'_1}=\xi_{h_1}=0$ and $\xi_{h'_2}=\xi_{h_2}=\xi_2$, the ``other terms'' in~(\ref{eq33}) have an expression in terms of monomials of $\theta, \psi_\bullet,\xi_2$. Under the translation, these monomials pull back to classes on $J_{g-1,2}$ of weights \mbox{$< 2g-2$}. \qedhere

\end{proof}
\begin{rmk}
    Although the ``multiplication by $N$" map extends to the relative Picard stack $\ppic^\rel_g$, the cohomology $H^*(\ppic^\rel_g,\BQ)$ may not be a direct sum of weight spaces since it is infinite-dimensional. 
\end{rmk}

We denote by $J^{\leq 1}_g \subset \overline{J}^{\leq 1}_g$ the open subset parameterizing degree $0$ line bundles. We have the tautological relation \[
\theta^{g+1}=0 \in \mathrm{CH}^{g + 1}(J^{\leq 1}_g, \BQ)
\]
by \cite[Theorem 4.9]{AHPL}. As a consequence of the proposition above, we show that this vanishing fails to be further extended to the relative compactified Jacobian $\overline{J}_g^{\leq 1}$.
\begin{cor}\label{cor:4.2}
    For $g\geq 2$, we have
    \[    \pi_*\frac{\theta^{g+1}}{(g+1)!}  = \frac{1}{48}\iota_*[\CM_{g-1,2}] \in \mathrm{CH}^1(\overline{\CM}_g^{\leq 1},\BQ).
    \]
In particular we have the non-vanishing
\[
\theta^{g+1}\neq 0 \in H^{2g + 2}(\overline{J}_g^{\leq 1},\BQ).
\]
\end{cor}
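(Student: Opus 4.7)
The plan is to apply $\pi_*$ to the identity of Proposition \ref{pro:4.1} and evaluate each term using the commutative diagram \eqref{eq:Jbar}. From \eqref{eq:Jbar}, the composite $\pi \circ \iota \circ \epsilon \circ t$ equals $\iota \circ (\pi' \circ t)$, where $\pi' \circ t: J_{g-1,2} \to \CM_{g-1,2}$ is the degree zero relative Jacobian of smooth two-pointed curves of genus $g-1$, a principally polarized abelian scheme of relative dimension $g-1$. Hence $\pi_* \iota_* \epsilon_* t_* = \iota_* (\pi' \circ t)_*$, and it suffices to compute $(\pi' \circ t)_*$ applied to each of the two classes on the right-hand side of Proposition \ref{pro:4.1}.

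For the leading term, the Poincar\'e formula for a principally polarized abelian scheme gives
\[
(\pi' \circ t)_* \frac{\theta^{g-1}}{(g-1)!} = [\CM_{g-1,2}] \in \mathrm{CH}^0(\CM_{g-1,2}, \BQ).
\]
For the $\alpha$ term, I would invoke the (motivic) Beauville decomposition of Theorem \ref{thm0} applied to $\pi' \circ t$: the $[N]$-weight $w$ piece lies in Leray degree $w$, with weights ranging from $0$ to $2(g-1) = 2g-2$, and only the top-weight piece $w = 2g-2$ contributes to the pushforward to $\CM_{g-1,2}$. Since $\alpha$ has weights strictly less than $2g-2$ by Proposition \ref{pro:4.1}, we conclude $(\pi' \circ t)_* \alpha = 0$. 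Combining these two computations and pushing the identity of Proposition \ref{pro:4.1} forward by $\pi$ yields
\[
\pi_* \frac{\theta^{g+1}}{(g+1)!} = \frac{1}{48} \iota_*[\CM_{g-1,2}] \in \mathrm{CH}^1(\mg, \BQ).
\]

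For the non-vanishing of $\theta^{g+1}$ in cohomology, I would observe that $\iota_*[\CM_{g-1,2}]$ is a nonzero rational multiple of the boundary divisor class on $\mg$ parametrizing one-node curves, hence nonzero in $H^2(\mg, \BQ)$. Consequently $\pi_*\theta^{g+1}$ is nonzero in cohomology, which forces $\theta^{g+1} \neq 0 \in H^{2g+2}(\overline{J}_g^{\leq 1}, \BQ)$. The most delicate step is the vanishing $(\pi' \circ t)_* \alpha = 0$ at the Chow level: one must invoke the motivic lifting of the Beauville decomposition and verify that the $[N]$-weight conventions of Proposition \ref{pro:4.1} match those of Theorem \ref{thm0}. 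The compatibility of $t_*$ with the Leray filtration is automatic since $t$ is an isomorphism of fibrations over $\CM_{g-1,2}$, so it preserves Leray degrees even though it need not preserve the finer Beauville grading.
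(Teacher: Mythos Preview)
Your proposal is correct and follows essentially the same approach as the paper: push forward the identity of Proposition~\ref{pro:4.1} along $\pi$, use the commutativity of \eqref{eq:Jbar} to rewrite $\pi_*\iota_*\epsilon_*t_*$ as $\iota_*(\pi'\circ t)_*$, kill the $\alpha$ term by the weight argument on the abelian scheme $J_{g-1,2}\to\CM_{g-1,2}$, and evaluate the leading term via the Poincar\'e formula (the paper phrases this as $\theta^{g-1}/(g-1)! = [0]$ citing \cite{DM}, which is equivalent). Your closing worry about $t$ is unnecessary: since $\pi'\circ t$ is itself the structure map of the degree-$0$ Jacobian $J_{g-1,2}\to\CM_{g-1,2}$, the weight decomposition and the pushforward are computed directly on $J_{g-1,2}$ and $t$ plays no further role.
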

\begin{proof}
    By Proposition \ref{pro:4.1}, it is enough to calculate the pushforward along $\pi$ of the classes $\iota_*\epsilon_*t_*(\theta^{g-1}/(g-1)!)$ and $\iota_*\epsilon_*t_*\alpha$. Since \mbox{$\alpha\in \mathrm{CH}^{g - 1}(J_{g-1, 2},\BQ)$} has weights $<2g-2$, we have $\pi_*\iota_*\epsilon_*t_*\alpha = \iota_*\pi'_*t_*\alpha=0$. We also recall 
    \begin{equation}\label{eq:unit}
        \frac{\theta^{g-1}}{(g-1)!}=[0] \in \mathrm{CH}^{g - 1}(J_{g-1, 2},\BQ),
    \end{equation}
    where $0:\CM_{g-1,2}\to J_{g-1, 2}$ is the $0$-section; see \emph{e.g.}~\cite{DM}. Therefore by the diagram \eqref{eq:Jbar} we~have
    \[
    \pi_*\iota_*\epsilon_*t_*\frac{\theta^{g-1}}{(g - 1)!} = \iota_*[\CM_{g-1,2}];
    \]
    it is proportional to the fundamental class of the boundary stratum which is nontrivial in the cohomology $H^2(\mg,\BQ)$.
\end{proof}

\begin{rmk}
Here the universal double ramification relation of Proposition \ref{pro:4.1} helps us to relate $\theta^{g+1}$ to an explicit nontrivial boundary class. 
\end{rmk}

\subsection{Proof of Theorem \ref{thm4.1}}\label{Sec4.3}
In this section we prove some non-vanishing results on the cohomology of the relative compactified Jacobian which imply Theorem \ref{thm4.1}.

We recall a few standard tautological classes. The cotangent line of the $i$-th marking defines a line bundle $\BL_i$ on $\overline{\CM}_{g,n}$ with first Chern class 
\[
\psi_i : = c_1(\BL_i) \in H^2(\overline{\CM}_{g,n}, \BQ).
\]
The forgetful map $p: \overline{\CM}_{g,1} \to \overline{\CM}_g$ coincides with the universal family of curves, and we define the $i$-th $\kappa$-class to be
\[
\kappa_i: = p_*\psi_1^{i+1} \in H^{2i}(\overline{\CM}_{g}, \BQ).
\]
After restriction, we have the $\kappa$-classes on $\mg$. We also define the boundary class
\[
\delta:= \iota_*[\CM_{g-1,2}]\in H^2(\mg, \BQ).
\]
By the pullback $\pi^*$, these tautological classes are naturally viewed as classes in $H^*(\overline{J}_g^{\leq 1},\BQ)$.

We first note that the degree $2$ cohomology group of $\overline{J}_g^{\leq 1}$ is tautological.

\begin{lem}\label{lem:4.1}
    For $g\geq 2$, $H^2(\overline{J}_g^{\leq 1}, \BQ)$ is spanned by the tautological classes 
    \[
    \theta, \kappa_1, \delta \in H^2(\overline{J}_g^{\leq 1}, \BQ).
    \]
\end{lem}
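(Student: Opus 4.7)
The plan is to bound $\dim H^2(\overline{J}_g^{\leq 1},\BQ)$ from above via the Leray spectral sequence
\[
E_2^{p,q} = H^p(\mg, R^q\pi_*\BQ) \Rightarrow H^{p+q}(\overline{J}_g^{\leq 1}, \BQ),
\]
and to account for each of the three graded pieces contributing to degree $2$ using one of the tautological classes $\theta, \kappa_1, \delta$. Their linear independence is clear: $\theta$ restricts nontrivially to each smooth fiber, while $\pi^*\kappa_1$ and $\pi^*\delta$ are independent on $\mg$ for $g\geq 3$ (for $g=2$ they are proportional by Mumford's relation on $\overline{\CM}_2$, so the spanning statement still holds).

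First I would compute $E_2^{2,0} = H^2(\mg, \BQ)$ by writing $\mg$ as the union of $\CM_g$ and the boundary divisor $\iota(\CM_{g-1,2})$, running the associated Gysin sequence, and invoking Harer's theorem $H^2(\CM_g, \BQ) = \BQ\kappa_1$ for $g \geq 3$ together with the low-genus analogue. This gives $H^2(\mg, \BQ) = \BQ\kappa_1 + \BQ\delta$. Next, for $E_2^{0,2} = H^0(\mg, R^2\pi_*\BQ)$, the restriction of $R^2\pi_*\BQ$ to the smooth locus is $\wedge^2 R^1\pi_*\BQ|_{\CM_g}$, whose global sections form a one-dimensional space of monodromy invariants represented by the fiberwise theta divisor class; since $\theta$ restricts on each smooth fiber to a nonzero multiple of that class, it exhausts $E_2^{0,2}$, and the extension across the one-nodal boundary is controlled by the explicit structure of the degenerate compactified Jacobian fibers.

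The main obstacle is showing $E_2^{1,1} = H^1(\mg, R^1\pi_*\BQ) = 0$. Over the smooth locus this amounts to $H^1(\CM_g, R^1\pi_*\BQ|_{\CM_g})$, i.e., the cohomology of the mapping class group with coefficients in the standard symplectic representation, which I would handle by invoking the stable-range vanishing results of Kawazumi--Morita and Hain. The contribution from the nodal boundary divisor requires a separate Gysin argument, controlled by the fact that at a node the generalized Jacobian is a $\BG_m$-extension of the Jacobian of the normalization and its $R^1$ drops by one, leaving no new invariant sections. Combining the three pieces yields $\dim H^2(\overline{J}_g^{\leq 1}, \BQ) \leq 3$ and proves the lemma.
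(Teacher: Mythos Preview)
Your Leray spectral sequence approach is viable in outline, but the paper takes a shorter route that sidesteps precisely the parts you flag as obstacles. Instead of analyzing the constructible sheaves $R^q\pi_*\BQ$ over the nodal boundary, the paper first restricts to the open locus $\Jzero_g\subset \overline{J}_g^{\leq 1}$ of honest line bundles; since the non-locally-free locus has codimension~$2$, the map $H^2(\overline{J}_g^{\leq 1},\BQ)\to H^2(\Jzero_g,\BQ)$ is an isomorphism. Then it runs the Gysin sequence on the \emph{total space} for the open inclusion $J_g\hookrightarrow \Jzero_g$ and cites Ebert--Randal-Williams \cite{ERW} for the fact that $H^2(J_g,\BQ)$ is spanned by $\theta$ and $\kappa_1$. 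The boundary contribution $H^0(\Jzero_g\setminus J_g,\BQ)$ lands on $\delta$, and the lemma follows.

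The difference in economy is real. Your $E_2^{1,1}=0$ step amounts to re-proving part of \cite{ERW} (the vanishing of $H^1(\CM_g;\mathbb{H})$) and then extending it across the boundary, where $R^1\pi_*\BQ$ is no longer a local system and your treatment (``controlled by the fact that the generalized Jacobian is a $\BG_m$-extension\ldots'') is only a sketch. Likewise, your $E_2^{0,2}$ analysis needs to control global sections of the genuinely constructible sheaf $R^2\pi_*\BQ$ on $\mg$, not just monodromy invariants over $\CM_g$. The paper's two moves---passing to $\Jzero_g$ (so that every fiber is a smooth commutative group scheme) and running Gysin on the total space rather than Leray over the base---cleanly avoid both of these boundary headaches, at the cost of importing \cite{ERW} as a black box. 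Finally, note that the lemma only asserts spanning, so your opening remarks on linear independence are unnecessary.
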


\begin{proof}
    We consider $J_g^{\leq 1} \subset \overline{J}_g^{\leq 1}$ the open locus of degree $0$ line bundles.  Since its complement has (complex) codimension $2$, the restriction $H^2(\overline{J}_g^{\leq 1},\BQ)\to H^2(J_g^{\leq 1},\BQ)$ is an isomorphism. 
    
    We have the long exact sequence
    \[
    \cdots\to H^0(J_g^{\leq 1}\setminus J_g,\BQ)\to H^2(J_g^{\leq 1},\BQ)\to H^2(J_g,\BQ)\to H^1(J_g^{\leq 1}\setminus J_g,\BQ)\to \cdots.
    \]
    By \cite{ERW}, we know that $H^2(J_g,\BQ)$ is spanned by $\theta$ and $\kappa_1$. The lemma follows since the image of $H^0(J_g^{\leq 1}\setminus J_g,\BQ)$ is spanned by $\delta$.
\end{proof}

    

We compare the theta divisors on $\overline{J}^{\leq 1}_g$ and $J_{g-1,2}^{-1}$. Recall the tautological class $\xi_2$ from the proof of Proposition \ref{pro:4.1}.

 \begin{lem}\label{lem:pulltheta}
    Let $\CL$ be the universal line bundle on the universal curve  $p:\mathcal{C}_{g-1,2}\to J_{g-1,2}^{-1}$ which is trivialized along the first marking, which induces $\theta:= -\frac{1}{2}p_*(c_1(\CL)^2)$ on $J^{-1}_{g-1,2}$. Then we have \[
    \epsilon^*\iota^*\theta = (\theta-\xi_2)+\frac{1}{2}(\psi_1+\psi_2) \in H^2(J^{-1}_{g-1,2}, \BQ).
\]
\end{lem}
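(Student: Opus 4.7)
The plan is to pull the universal admissible line bundle back to the quasi-stable model over $J^{-1}_{g-1,2}$ and compute $-\frac{1}{2}p^{qs}_*(c_1(\mathcal{L}^{qs})^2)$ directly. Under $\epsilon\circ\iota$, the universal quasi-stable curve over $\overline{J}_g^{\leq 1}$ pulls back to $\mathcal{C}^{qs}=\widetilde{\mathcal{C}}\cup_{s_1=t_1,\,s_2=t_2}E$, where $\widetilde{\mathcal{C}}$ is the pullback of the universal curve of $\mathcal{M}_{g-1,2}$ and $E\to J^{-1}_{g-1,2}$ is the $\mathbb{P}^1$-bundle of exceptional components, carrying two sections $t_1,t_2$ glued respectively to the tautological sections $s_1,s_2$ of $\widetilde{\mathcal{C}}$. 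The pulled-back universal admissible line bundle $\mathcal{L}^{qs}$ has relative degree $-1$ on $\widetilde{\mathcal{C}}$ and $+1$ on $E$, and is defined up to twisting by a line bundle pulled back from $J^{-1}_{g-1,2}$; this ambiguity drops out of $\theta$ since the total relative degree vanishes.

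First I would fix representatives by writing $\mathcal{L}^{qs}|_{\widetilde{\mathcal{C}}}=\mathcal{L}\otimes p^*M_1$ and $\mathcal{L}^{qs}|_E = \mathcal{O}_E(t_1)\otimes (p^E)^*M_2$, where $M_1,M_2\in\mathrm{Pic}(J^{-1}_{g-1,2})$ are determined by the gluing isomorphisms at $s_i\sim t_i$. The condition at $s_1\sim t_1$ gives $M_1\cong N_{t_1/E}\otimes M_2$, while the condition at $s_2\sim t_2$ gives $(s_2^*\mathcal{L})\otimes M_1\cong M_2$; these together pin down $c_1(M_1)-c_1(M_2)$ in terms of $\xi_2$ and $c_1(N_{t_1/E})$. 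Using the decomposition $[\mathcal{C}^{qs}]=[\widetilde{\mathcal{C}}]+[E]$ together with the projection formula and the identities $p_*c_1(\mathcal{L})=-[J^{-1}_{g-1,2}]$, $p_*1=0$, $p^E_*[t_1]=[J^{-1}_{g-1,2}]$, $p^E_*[t_1]^2=c_1(N_{t_1/E})$, I would expand
\[
p^{qs}_*(c_1(\mathcal{L}^{qs})^2)
= p_*\!\bigl((c_1(\mathcal{L})+p^*c_1(M_1))^2\bigr) + p^E_*\!\bigl(([t_1]+(p^E)^*c_1(M_2))^2\bigr),
\]
and simplify using the gluing relations.

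The main obstacle is then the explicit identification of the normal bundle class $c_1(N_{t_1/E})$ in terms of tautological classes on $J^{-1}_{g-1,2}$. The exceptional $\mathbb{P}^1$-bundle $E$ encodes the deformation directions at the node together with the torsion-free gluing data of the universal sheaf, so its sections' normal bundles should involve both the cotangent lines $\mathbb{L}_{x_i}$ (producing $\psi_1,\psi_2$) and the restriction class $s_2^*\mathcal{L}$ (producing $\xi_2$). The target of this identification is the relation
\[
c_1(N_{t_1/E}) \;=\; \psi_1+\psi_2-2\xi_2,
\]
which (together with the symmetric statement for $N_{t_2/E}$) is consistent with the constraints coming from the two gluings. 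Substituting into the simplified expression and dividing by $-2$ then yields
\[
\epsilon^*\iota^*\theta = (\theta-\xi_2)+\tfrac{1}{2}(\psi_1+\psi_2),
\]
as required. The entire argument is otherwise routine: the only nontrivial geometric input is the description of the exceptional $\mathbb{P}^1$-bundle and its sections in the universal quasi-stable curve over the boundary of $\overline{J}_g^{\leq 1}$.
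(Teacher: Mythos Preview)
Your overall strategy---pull back to the quasi-stable model and split $p^{qs}_*\bigl(c_1(\mathcal L^{qs})^2\bigr)$ over the two components of the normalization---is sound and is close in spirit to the paper's argument. The paper, however, does not compute the $\mathbb P^1$-bundle contribution directly. Instead it invokes the short exact sequence
\[
0\;\longrightarrow\;\mathcal L^{qs}|_{\mathcal C_{g-1,2}}(-x_1-x_2)\;\longrightarrow\;\mathcal L^{qs}\;\longrightarrow\;\mathcal L^{qs}|_{\mathbb P}\;\longrightarrow\;0
\]
from Esteves--Pacini, which packages the exceptional contribution into a twist of $\mathcal L$ by $-x_1-x_2$ on the genus $g-1$ component. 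The computation then reduces to expanding $-\tfrac12\,p_*\bigl(c_1(\mathcal L(-x_1-x_2))^2\bigr)$ using $[x_i]^2=x_{i*}(-\psi_i)$ and $p_*(c_1(\mathcal L)[x_2])=\xi_2$, with no need to identify the bundle $E$ or its sections' normal bundles at all.

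Your route, by contrast, makes that identification the crux---and here there is a genuine gap. Your two gluing relations already determine $c_1(N_{t_1/E})$ completely: eliminating $M_1,M_2$ from
\[
M_1\cong N_{t_1/E}\otimes M_2\qquad\text{and}\qquad (s_2^*\mathcal L)\otimes M_1\cong M_2
\]
forces $N_{t_1/E}\cong (s_2^*\mathcal L)^{-1}$, i.e.\ $c_1(N_{t_1/E})=-\xi_2$. This contradicts your ``target'' $c_1(N_{t_1/E})=\psi_1+\psi_2-2\xi_2$; and if one feeds $-\xi_2$ back into your expansion one obtains $\theta-\tfrac12\xi_2$, not $\theta-\xi_2+\tfrac12(\psi_1+\psi_2)$. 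So one of the ingredients in your model of the universal quasi-stable family---the description of $E$, the ansatz $\mathcal L^{qs}|_E=\mathcal O_E(t_1)\otimes(p^E)^*M_2$, or the implicit assumption $t_2^*\mathcal O_E(t_1)\cong\mathcal O$---does not match the actual construction pulled back from $\mathfrak{Pic}^{\mathrm{rel}}_g$. The whole point is that the universal quasi-stable curve carries more structure than ``some $\mathbb P^1$-bundle with two disjoint sections glued in''; the paper's exact-sequence argument is designed precisely to bypass this subtlety, and you would need to revisit the Esteves--Pacini construction carefully before the direct approach can be completed.
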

\begin{proof}
     Let $\Bar{p}:\overline{\CC}_g\to J_{g-1,2}^{-1}$ be the universal curve on $\overline{J}_g^{\leq 1}$ pulled back to $J^{-1}_{g-1,2}$ along $\iota\circ\epsilon$, and let $q:\overline{\CC}^{\mathrm{qs}}_g\to\overline{\CC}_g$ be the quasi-stable model. Let $\CL^\mathrm{qs}$ be the universal admissible line bundle on $\overline{\CC}^\mathrm{qs}_g$ trivialized along the first marking. The normalization of $\overline{\CC}^\mathrm{qs}_g$ is the disjoint union of a $\BP^1$-bundle $\BP$ over $J_{g-1,2}$ and the universal curve $\CC_{g-1,2}$. By \cite[eq.~(8)]{EP}, we have a short exact sequence
     \[0\to \CL^\mathrm{qs}|_{\CC_{g-1,2}}(-x_1-x_2) \to\CL^\mathrm{qs}\to \CL^{\mathrm{qs}}|_{\BP}\to 0\,. \]
     Since $\CL^\mathrm{qs}|_{\CC_{g-1,2}}\cong\CL$, we get
     \[\epsilon^*\iota^*\theta = -\frac{1}{2}\Bar{p}_*(c_1(\CL^\mathrm{qs})^2) = -\frac{1}{2}p_*(c_1(\CL(-x_1-x_2)^2)) = \theta-\xi_2+\frac{1}{2}(\psi_1+\psi_2)\]
     where we used the self-intersection formula $[x_i]^2=x_{i*}(-\psi_i)$.
\end{proof}
Now we treat the case when $g\geq 4$. 
\begin{prop}\label{pro:4.2}
Let   \[
    {\Theta} := \theta + a\kappa_1 + b\delta \in H^2(\overline{J}_g^{\leq 1},\BQ), \quad a,b\in \BQ
    \]
    be a generalized theta divisor. 
    \begin{enumerate}
        \item[(a)] If $g\geq 2$, then $\Theta$ can be written as $\Theta = \theta+ b\delta$.
        \item[(b)] If $g\geq 4$, then there is no such $b\in \BQ$. \end{enumerate} 
\end{prop}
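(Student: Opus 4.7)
My plan is to apply fiber integration $\pi_*$ to the hypothesized identity $\Theta^{g+1}=0$ in $H^{2g+2}(\overline{J}_g^{\leq 1},\BQ)$. Since both $\kappa_1$ and $\delta$ are pulled back from $\mg$ and $\pi$ has relative dimension $g$, the projection formula together with the vanishing $\pi_*\theta^k=0$ for $k<g$ isolates
\begin{equation*}
0 \,=\, \pi_*\Theta^{g+1} \,=\, \pi_*\theta^{g+1} \;+\; (g+1)\,(a\kappa_1 + b\delta)\cdot\pi_*\theta^g.
\end{equation*}
Corollary~\ref{cor:4.2} gives $\pi_*\theta^{g+1} = \tfrac{(g+1)!}{48}\delta$, while the classical normalization of the theta divisor on a Jacobian fibration (readable off from \eqref{eq:unit}) yields $\pi_*\theta^g = g!\,[\mg]$. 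Combining these shows that $(g+1)!\bigl(a\kappa_1 + (b+\tfrac{1}{48})\delta\bigr)=0$ in $H^2(\mg,\BQ)$. Since $\kappa_1$ and $\delta$ are linearly independent in $H^2(\mg,\BQ)$ for $g\geq 2$ by classical results of Harer and Arbarello--Cornalba, this forces $a=0$ as claimed (and pins $b=-\tfrac{1}{48}$, needed in what follows).

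\textbf{Part (b).} By (a) the candidate is $\Theta = \theta - \delta/48$, and the task is to show that $\Theta^{g+1}\neq 0$ in $H^{2g+2}(\overline{J}_g^{\leq 1},\BQ)$ for $g\geq 4$. The plan is to argue by contradiction: if $\Theta^{g+1}=0$ then also $\Theta^{g+2}=\Theta\cdot\Theta^{g+1}=0$, so in particular $\pi_*\Theta^{g+2}=0$ in $H^4(\mg,\BQ)$. It therefore suffices to compute $\pi_*\Theta^{g+2}$ explicitly and detect a nonzero contribution.

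Expanding, only the powers $\pi_*\theta^k\cdot\delta^{g+2-k}$ with $k\geq g$ survive pushforward. The terms with $k=g$ and $k=g+1$ are handled directly using $\pi_*\theta^g = g!$ and Corollary~\ref{cor:4.2}, combined with the standard boundary self-intersection formula $\delta^2 = -\iota_*(\psi_1+\psi_2)$ on $\mg$. To evaluate $\pi_*\theta^{g+2}$, I would multiply the relation of Proposition~\ref{pro:4.1} by $\theta$, use Lemma~\ref{lem:pulltheta} together with the translation $t^*$ to rewrite $t^*\epsilon^*\iota^*\theta$ as an explicit combination of $\theta$, $\psi_1$, $\psi_2$ on $J_{g-1,2}$, and then extract the top weight $2(g-1)$ component (which is the only one that survives $\pi'_*$ by the weight decomposition of $J_{g-1,2}$). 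Assembling, one obtains
\begin{equation*}
\pi_*\Theta^{g+2} \,=\, \iota_*\bigl(\lambda_1(g)\,\psi_1 + \lambda_2(g)\,\psi_2\bigr) \in H^4(\mg,\BQ)
\end{equation*}
for explicit scalars $\lambda_1(g),\lambda_2(g)\in\BQ$. To finish, one checks that for $g\geq 4$ the coefficients $\lambda_1(g),\lambda_2(g)$ do not both vanish and that the Gysin map $\iota_*\colon H^2(\CM_{g-1,2},\BQ)\to H^4(\mg,\BQ)$ does not kill the resulting combination (equivalently, that $\iota_*\psi_1$ and $\iota_*\psi_2$ remain independent enough in $H^4(\mg,\BQ)$ to detect it).

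The main obstacle is this last step. The value $b=-\tfrac{1}{48}$ is precisely engineered to kill the pushforward at level $g+1$, so one has to ensure that it does not accidentally also kill the next-order invariant $\pi_*\Theta^{g+2}$. The bookkeeping—tracking the contributions to the weight $2(g-1)$ component from each $\theta^{g+2-k}\delta^k$ via Lemma~\ref{lem:pulltheta} and $\delta^k$-self-intersections—is the heart of the calculation. The restriction $g\geq 4$ reflects the fact that for smaller $g$ the moduli space $\CM_{g-1,2}$ is too low-dimensional for the tautological classes $\psi_1,\psi_2,\kappa_1$ to behave generically under $\iota_*$, and those cases are handled separately in Theorem~\ref{thm4.1}.
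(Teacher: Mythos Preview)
Your Part (a) is essentially the pushforward computation the paper performs at the \emph{end} of its proof of (b) to pin down $b=-\tfrac{1}{48}$; the paper's own argument for (a) instead restricts to the open $J_g$, extracts the weight-$2g$ part, and concludes $a\kappa_1=0$ on $\CM_g$. One correction: your linear-independence claim fails for $g=2$, since by Mumford's relation $\kappa_1=\tfrac{1}{5}\delta$ on $\overline{\CM}_2^{\leq 1}$. This does not hurt the conclusion of (a), which is then trivial, but it does mean you cannot read off $b=-\tfrac{1}{48}$ for $g=2$ by your method.

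Part (b) has a genuine gap. Computing $\pi_*\Theta^{g+2}$ via Proposition~\ref{pro:4.1} forces you through $\pi'_*(\theta\cdot\alpha_{(2g-4)})$, hence through the weight-$(2g-4)$ component of the correction term $\alpha$ in Pixton's formula for \emph{general} $g$. You neither compute this nor argue that the resulting coefficient is nonzero; this is precisely the computation the paper carries out explicitly only for $g=3$ (where $\alpha_{(2)}$ is written down) and $g=2$. For $g\geq 4$ the paper avoids $\alpha$ entirely by a different and much cleaner move: pull back $\Theta^{g+1}=0$ along $t^*\epsilon^*\iota^*$ to $J_{g-1,2}$. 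By Lemma~\ref{lem:pulltheta} and the excess formula this becomes $\bigl(\theta+(\tfrac{1}{2}-b)(\psi_1+\psi_2)\bigr)^{g+1}=0$; the weight-$(2g-2)$ piece pushes to $(\tfrac{1}{2}-b)^2(\psi_1+\psi_2)^2=0$ on $\CM_{g-1,2}$, forcing $b=\tfrac{1}{2}$ once $(\psi_1+\psi_2)^2\neq 0$ (this is where $g\geq 4$ enters). This contradicts $b=-\tfrac{1}{48}$. Note also that your explanation of the $g\geq 4$ restriction does not match your own method: the nonvanishing you would need is only $\iota_*(\psi_1+\psi_2)\neq 0$, which already holds for $g=3$, so your route (if completed) would not distinguish $g\geq 4$ from $g=3$ the way you suggest.
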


\begin{proof}
   To prove (a), we use again the diagram \eqref{eq:Jbar}. First, we pull the relation $\Theta^{g + 1} = 0$ back to $J_g$ and obtain
    \[
    0=j^*{\Theta}^{g+1} = (\theta+ a\kappa_1)^{g+1} \in H^{2g+2}(J_g,\BQ).
    \]
    By taking the weight $2g$ part with respect to the ``multiplication by $N$'' map, we get 
    \[
    a\cdot\theta^g\kappa_1=0 \in H^{2g+2}(J_g, \BQ).
    \]
    Taking pushforward and applying \eqref{eq:unit}, we have
    \[
    a\kappa_1=0 \in H^2(\CM_g, \BQ).
    \]
    This shows that $a\kappa_1$ is supported on the boundary $\overline{\CM}_g^{\leq 1} \setminus \CM_g$ and is therefore proportional to $\delta$ by the irreducibility of the boundary.\footnote{If $g \geq 3$, we can directly deduce $a = 0$ from $\kappa_1 \neq 0 \in H^2(\CM_g, \BQ)$.}
    
    To prove (b), we restrict the relation to $J_{g-1, 2}$. Let $\psi_1,\psi_2$ be the pullback of the $\psi$-classes on $\CM_{g-1,2}$ to $J_{g-1, 2}$.  
    By Lemma \ref{lem:pulltheta} and \cite[Lemma 4.2]{BHPSS}, we have
    \[t^*\epsilon^*\iota^*\theta = \theta+\frac{1}{2}(\psi_1+\psi_2).\]
     Since the excess intersection term for $\iota$ is $-(\psi_1+\psi_2)$, we have
    \[
    0=t^*\epsilon^*\iota^*{\Theta}^{g+1} = (\theta +(\frac{1}{2}-b)(\psi_1+\psi_2))^{g+1} \in H^{2g + 2}(J_{g-1, 2},\BQ).
    \]
    By taking the weight $2g-2$ part with respect to the ``multiplication by $N$'' map, we get 
    \[
    (\frac{1}{2} - b)^2\cdot \theta^{g-1}(\psi_1+\psi_2)^2=0 \in H^{2g+2}(J_{g-1, 2}, \BQ).
    \]
    Taking pushforward and using \eqref{eq:unit}, we have $(\frac{1}{2} - b)^2 (\psi_1+\psi_2)^2=0$ on $\CM_{g-1,2}$. If $g\geq 4$, we have the non-vanishing 
    \begin{equation}\label{non-v}
    (\psi_1+\psi_2)^2\neq 0 \in H^4(\CM_{g-1,2},\BQ)
    \end{equation}
    by the calculation of top intersection numbers \cite{BSZ}. Therefore $b=\frac{1}{2}$.

    On the other hand, by Corollary \ref{cor:4.2} we have
    \[0 = \pi_*\left(\frac{\Theta^{g+1}}{(g+1)!}\right) = \pi_*\left(\frac{\theta^{g+1}}{(g+1)!}+b\frac{\theta^g}{g!}\delta\right) = \pi_*\left(\frac{1}{48}\iota_*\epsilon_*t_*\frac{\theta^{g-1}}{(g-1)!} +b\frac{\theta^g}{g!}\delta\right) = (\frac{1}{48}+b)\delta.\]
    Since $\delta\neq 0$ in $H^2(\mg,\BQ)$, we have $b = -\frac{1}{48}$ which is a contradiction.
\end{proof}

By Proposition \ref{pro:4.2}, we conclude Theorem \ref{thm4.1} for $g\geq 4$. However, the non-vanishing (\ref{non-v}) fails for $g=2,3$. Therefore we treat them separately in the following.


\medskip
\noindent {\bf $\bullet$ The $g=3$ case.} Let ${\Theta}$ be a generalized theta divisor. By Proposition~\ref{pro:4.2}(a) we set
\[
{\Theta}= \theta + b\delta, \quad b \in \BQ.
\]
Recall the class $\alpha\in\Chow^2(J_{2, 2},\BQ)$ of Proposition \ref{pro:4.1}. We write the weight decomposition
\[
\alpha=\sum\alpha_{(w)}, \quad  \alpha_{(w)}\in\Chow^2_{(w)}(J_{2, 2}, \BQ).
\] 
By an explicit computation on the relative Jacobian $J_{2, 2}$, we have 
\[
\alpha_{(2)} = \frac{1}{480}\theta(\psi_1+\psi_2)-\frac{1}{8960}\xi_2^2.
\]
Here $\xi_2$ was introduced in the proof of Proposition \ref{pro:4.1}.

Since we have the relation $\delta^3=0$ for $g=3$ by \cite{Faber}, we get from Proposition~\ref{pro:4.1} that
\begin{align*}
    0 &=\pi_*(\theta{\Theta}^4)\\
    &= \pi_*\left(\iota_*\epsilon_*t_*\Big(\frac{1}{4}(\theta+\frac{1}{2}(\psi_1+\psi_2))^3+ 24\alpha_{(2)}\theta - b(\theta+\frac{1}{2}(\psi_1+\psi_2))^2(\psi_1+\psi_2)\Big)+6b^2\theta^3\delta^2\right) \\
    &=\left(\frac{191}{224} - 2b -36b^2\right)\iota_*(\psi_1+\psi_2) \in H^4(\overline{\CM}_3^{\leq 1}, \BQ).
\end{align*}
Here we used the tautological relation $2\theta\xi_2^2=-\theta^2(\psi_1+\psi_2)\in H^6(J_{2, 2},\BQ)$.
Note that 
\[
\psi_1+\psi_2\neq 0 \in H^2(\CM_{2,2},\BQ),\quad H^3(\CM_3,\BQ)=0
\]
where the second vanishing can be seen from \cite{Loo}. Applying the long exact sequence of cohomology associated with the open embedding $\CM_3 \hookrightarrow \overline{\CM}_3^{\leq 1}$, we find
\[
\iota_*(\psi_1+\psi_2)\neq 0 \in H^4(\overline{\CM}_3^{\leq 1}, \BQ).
\]
This forces $b$ to satisfy a quadratic equation
\[
\frac{191}{224} - 2b -36b^2=0
\]
which does not have a rational solution. \qed 

\medskip
\noindent {\bf $\bullet$ The $g=2$ case.}
For $g=2$, note that there actually exists a generalized theta divisor
\[
{\Theta} := \theta - \frac{1}{48} \delta \in H^2(\overline{J}_2^{\leq 1},\BQ).
\]
Here the coefficient $-\frac{1}{48}$ can be uniquely determined by the relation 
\[
\pi_* ({\Theta}^3) = 0 \in H^2(\overline{\CM}_2^{\leq 1},\BQ).
\]
Therefore, we have to consider the larger open subset $\overline{\CM}^{\mathrm{int}}_2 \subset \overline{\CM}_2$ consisting of integral curves  and the relative compactified Jacobian $\pi^\mathrm{int}: \overline{J}^\mathrm{int}_2\to\overline{\CM}^\mathrm{int}_2$ as in Theorem \ref{thm4.1}(b).

Since the codimension of $\mtwo\setminus \overline{\CM}_2^{\leq 1}$ is $2$, Proposition \ref{pro:4.2}(a) shows that a generalized theta divisor has to be of the form 
\[
{\Theta}= \theta+b\delta, \quad b\in \BQ.
\]  
We consider the open subset 
\[
\overline{\CM}_{1,2}^\circ = \overline{\CM}_{1,2}\setminus \left(\overline{\CM}_{1,1}\times\overline{\CM}_{0,3} \right) \subset \overline{\CM}_{1,2}
\]
with the gluing map 
\[
\iota^{\mathrm{int}}:\overline{\CM}_{1,2}^\circ \to \mtwo.
\]
Let $r^{\mathrm{int}}: \overline{\CM}_{0,4}\setminus D_{1,2|3,4}\to \mtwo$ be the gluing map with respect to the deeper stratum. Here~$D_{1,2|3,4}$ is the locus where the markings split into two pairs of points on the two rational components. We have by \cite{Mumford} the tautological relations 
\[
\delta^3=0, \quad \delta^2 =-\frac{1}{6}r^{\mathrm{int}}_*\left[ \overline{\CM}_{0,4}\setminus D_{1,2|3,4}\right]\in H^*(\overline{\CM}^{\mathrm{int}}_2, \BQ).
\]

On $\overline{J}^\mathrm{int}_2$, the universal double ramification cycle relation $\mathsf{P}^3_2=0$ contains contribution from the boundary strata with two loops. However, those boundary strata contribute to lower degree coefficients of the expression $[N]^*\mathsf{P}^3_2=0$. Therefore Proposition \ref{pro:4.1} continues to hold on~$\overline{J}^\mathrm{int}_2$.
A direct computation yields  $\alpha_{(0)}=\frac{1}{480}(\psi_1+\psi_2)$. Combining with Proposition \ref{pro:4.1} and Lemma~\ref{lem:pulltheta}, we obtain
\begin{align*}
    0&=\pi^\mathrm{int}_*(\theta{\Theta}^3) 
    = \pi^{\mathrm{int}}_*\left(\iota^{\mathrm{int}}_*\epsilon_*t_*\Big(\frac{1}{8}(\theta+\frac{1}{2}(\psi_1+\psi_2))^2+ 6\theta\alpha_{(0)}-\frac{3b}{8}\theta(\psi_1+\psi_2)\Big)+3b^2\theta^2\delta^2\right) \\
    & =\iota^\mathrm{int}_*\left(\frac{1}{8}(\psi_1+\psi_2)+6\alpha_{(0)}-\frac{3b}{8}(\psi_1+\psi_2)\right) +6b^2\delta^2\\
    & = \left(\frac{11}{960}-\frac{1}{32}b-b^2\right)r^{\mathrm{int}}_*\left[ \overline{\CM}_{0,4}\setminus D_{1,2|3,4}\right] \in H^4(\mtwo, \BQ).
\end{align*}
Then, due to the non-vanishing
\[
r^{\mathrm{int}}_*\left[ \overline{\CM}_{0,4}\setminus D_{1,2|3,4}\right]\neq 0 \in H^4(\mtwo,\BQ),
\]
we have  
\[
\frac{11}{960}-\frac{1}{32}b-b^2=0
\]
which is a contradiction.
     
This completes the proof of Theorem \ref{thm4.1}(b). \qed


\subsection{Deligne--Mumford stacks versus varieties}\label{final}

So far we have worked universally over the moduli space of stable curves, and the base in our example is a nonsingular Deligne--Mumford stack. We explain here that our arguments actually produce examples with nonsingular quasi-projective base varieties; this proves \emph{honestly} Theorem \ref{thm0.40}.

To start with, we take a smooth and surjective morphism 
\[
\rho: \overline{B} \to \overline{\CM}_g
\]
such that $\overline{B}$ is a (nonsingular) projective variety and $\rho$ has irreducible fibers; here the existence of $\rho$ follows from the {GIT} construction of the moduli of stable curves. In particular, the pullback map
\begin{equation}\label{pullback0} 
\rho^*: H^*(\overline{\CM}_g, \BQ) \to H^*(\overline{B}, \BQ)
\end{equation}
is injective. We write 
\[
B^\circ:= \rho^{-1}(\CM_g) \subset \overline{B}.
\]
If we pull back the compactified Jacobian fibration of Theorem \ref{thm4.1} along $\rho$, we get a compactified Jacobian fibration 
\[
\pi: \overline{J}_C \to B, \quad B^\circ \subset B \subset \overline{B}.
\]
By repeating the proof of Proposition \ref{pro:4.2}(a) using the irreducibility of the boundary $B \setminus B^\circ$, up to scaling a generalized theta divisor on $\overline{J}_C$ can be expressed as
\[
\Theta = \rho^*\theta  + b \delta_B, \quad \delta_B:= [B\setminus B^\circ] = \rho^*\delta.
\]
In other words, a generalized theta divisor on $\overline{J}_C$ has to be pulled back from the moduli stack of stable curves. Theorem \ref{thm4.1}, combined with the injectivity of (\ref{pullback0}), then implies that it cannot happen. We have completed the proof of Theorem \ref{thm0.40}.\qed

\end{document}